\documentclass[12pt, a4paper]{amsart}
\usepackage[T1]{fontenc} 
\usepackage{graphicx} 
\usepackage{tikz}
\usepackage{float}
\usepackage{subcaption}
\usepackage{placeins}
\usepackage{amsmath, amssymb}
\usepackage{dsfont}
\usepackage{amsthm}
\usepackage[hidelinks]{hyperref}
\usepackage{cleveref}
\usepackage{a4wide}
\usepackage{xargs}
\usepackage{thmtools}
\usepackage{thm-restate}
\usepackage{pgfplots}
\pgfplotsset{compat=1.18}

\usepackage{todonotes} 

\newif\ifthesis
\thesisfalse

\newtheorem{theorem}{Theorem}[section]

\newtheorem{lemma}[theorem]{Lemma}
\newtheorem{definition}[theorem]{Definition}
\newtheorem{corollary}[theorem]{Corollary}

\newtheorem{proposition}[theorem]{Proposition}

\crefformat{theorem}{#2#1#3}
\crefformat{fact}{#2#1#3}
\crefformat{lemma}{#2#1#3}
\crefformat{corollary}{#2#1#3}
\crefformat{result}{#2#1#3}
\crefformat{equation}{(#2#1#3)}
\crefformat{notation}{(#2#1#3)}
\crefformat{assumption}{(#2#1#3)}
\crefformat{definition}{(#2#1#3)}
\crefformat{proposition}{#2#1#3}
\crefformat{section}{#2#1#3}
\crefformat{item}{#2#1#3}
\crefformat{fig}{#2#1#3}

\DeclareMathOperator{\sign}{sign}

\newcommand{\Rd}{{\R^d}}

\DeclareMathOperator{\Mass}{\nu_{\alpha}}
\DeclareMathOperator{\Masss}{\nu_{\alpha}^{(\boldsymbol{a})}}
\DeclareMathOperator{\mass}{\widetilde{\nu}_{\alpha}}

\newcommand{\R}{\mathbb{R}}
\newcommand{\RR}{\mathcal{R}}
\newcommand{\Ro}{\R_*}

\newcommand{\E}{\mathcal{E}}

\newif\ifdetails
\detailstrue     

\title{Critical fractional Hardy inequalities}

\author[K.~Bogdan]{Krzysztof Bogdan}
\address{Faculty of Pure and Applied Mathematics, Wroc\l{}aw University of Science and Technology, Wyb. Wyspia\'nskiego 27, 50-370 Wroc\l{}aw, Poland.}
\email{krzysztof.bogdan@pwr.edu.pl}

\author[B.~Dyda]{Bart\l{}omiej Dyda}
\address{Faculty of Pure and Applied Mathematics, Wroc\l{}aw University of Science and Technology, Wyb. Wyspia\'nskiego 27, 50-370 Wroc\l{}aw, Poland.}
\email{bartlomiej.dyda@pwr.edu.pl\quad dyda@math.uni-bielefeld.de}

\author[A.~Szczukiewicz]{Antoni Szczukiewicz}
\address{Faculty of Pure and Applied Mathematics, Wroc\l{}aw University of Science and Technology, Wyb. Wyspia\'nskiego 27, 50-370 Wroc\l{}aw, Poland.}
\email{264032@student.pwr.edu.pl}

\date{October 7, 2025}

\subjclass[2020]{Primary 26D10; 
Secondary 31C45}
\keywords{critical Hardy inequality, fractional Laplacian, Sobolev--Bregman form}

\begin{document}

\maketitle

\begin{abstract}
   We give a roadmap for the study of critical fractional Hardy inequalities and resolve the one-dimensional cases for Dirichlet and Sobolev–Bregman forms.
\end{abstract}
\section{Introduction}

In what follows, $\alpha \in (0, 2)$. Let us start  with the fractional Laplacian on the real line,
\begin{equation}
    \label{e.fL}
\Delta^{\alpha/2}u(x) := \underset{\epsilon \to 0+}{\lim} \int_{|y-x|>\epsilon} \left(u(y)-u(x)\right) \mathcal {A}_{\alpha}\Mass(x,y) \,dy,\quad x\in \mathbb R.
\end{equation}
Here and later on,
    \begin{equation}
        \label{e.na}
        \Mass(x, y) := |x - y|^{-\alpha - 1},\quad x,y\in \R,    \end{equation}
    $\mathcal {A}_{\alpha}:=
2^{\alpha}\Gamma\big((1+\alpha)/2\big)\pi^{-1/2}/|\Gamma(-\alpha/2)|$, and, of course, $\Gamma$ is the Euler gamma function. The operator is nonlocal, meaning that $\Delta^{\alpha/2}u(x)$ depends on values of $u$ beyond a neighborhood of $x$. The principal value integrals in \eqref{e.fL} converge, e.g., for functions $u\in C_c^{2}(\mathbb R)$, but, 
in fact, we are more interested in the corresponding quadratic form
\begin{equation}\label{definition: quadratic form}
\frac{1}{2} \int_{\R} \int_{\R} (u(x)-u(y))^2 \mathcal {A}_{\alpha}\Mass(x,y) \,dy \,dx,
\end{equation}
which is well defined for every (Borel measurable) $u: \R \to \R$.
Then for all $0<\alpha<1$, $\alpha-1 \leq \beta \leq 0$,
and
$u \in L^2(\R, dx)$, we have the \textit{ground state representation} of \eqref{definition: quadratic form} as
\begin{align}\label{e:p2}
&
\int_{\R} u(x)^2\,\kappa_{-\beta}|x|^{-\alpha}\,dx
 + \frac{1}{2} \int_{\R}\!\int_{\R}
\left(\frac{u(x)}{h(x)}-\frac{u(y)}{h(y)}\right)^2
h(x)h(y) {\mathcal{A}}_{\alpha}\Mass(x,y)
\,dy\,dx,
\end{align}
where $h(x):=|x|^{\beta}$ is the \textit{ground state} and
\begin{equation}\label{e.dkb}
\kappa_{\delta}:=\frac{2^\alpha\Gamma\big(\frac{\delta+\alpha}{2}\big) \Gamma\big(\frac{1-\delta}{2}\big)}{\Gamma\big(\frac{\delta}{2}\big)\Gamma\big(\frac{1-\delta-\alpha}{2}\big)},\quad \delta\in(-\alpha,1),
\end{equation}
see Bogdan, Dyda, and Kim \cite[Proposition 5]{MR3460023}\footnote{Note that \cite{MR3460023} uses a different parametrization $h(x)=|x|^{-\beta}$.} or Frank, Lieb, and Seiringer \cite[Proposition 4.1]{MR2425175}.
To be clear, \cite{MR3460023} and \cite{MR2425175} give ground state representations in  $\R^d$ for $d\in \mathbb N$ and a range of $\alpha$, but
the case $d=1$ considered in this paper yields the  restriction $\alpha<1$ in  \eqref{e:p2}; see however Corollary~\ref{c.La2} and Subsection~\ref{ss.uorm}. 

The \textit{first goal} of this work is to extend the ground state representation 
to the forms 
\begin{equation}\label{definition: quadratic formg}
{\E_k}
{[u]}:= \frac{1}{2} \int_{\R} \int_{\R} (u(x)-u(y))^2 k(x,y) \,dy \,dx,
\end{equation}
with symmetric (Borel measurable) kernels $0\le k\le c\Mass$ homogeneous of order $-\alpha -1$. The resulting  ground state representation is given explicitly in Corollary~\ref{corollary: case p = 2 ground state} below for functions $u\in L^2(|x|^{-\alpha}dx):=L^2(\R,|x|^{-\alpha}dx)$, where, compared to \eqref{e:p2}, we relax the restriction on $\alpha$ and allow for \textit{asymmetric} power functions $h$.

Furthermore, it is known that $\kappa_{\delta}$ 
 attains its  maximum  at
$\delta=(1-\alpha)/2$ and
\begin{equation}\label{e.mkb}
\kappa_{(1-\alpha)/2}=2^\alpha\Gamma\left(\frac{1+\alpha}{4}\right)^2/
\Gamma\left(\frac{1-\alpha}{4}\right)^{2},
\end{equation}
see \cite[Lemma 3.2]{MR2425175} or \cite[p.~237]{MR3460023}.
In particular,
the following Hardy inequality holds
\begin{equation}\label{e:hardy-quad}
\frac{1}{2} \int_{\R} \int_{\R} (u(x)-u(y))^2 \mathcal {A}_{\alpha}\Mass(x,y) \,dy \,dx \geq
\int_{\R} u(x)^2 \,\kappa_{(1-\alpha)/2}|x|^{-\alpha}\,dx
\end{equation}
for all $u \in L^2(\R)$.
The inequality is also known as Hardy-Rellich inequality and  was proved by Herbst \cite[(2.6)]{MR436854}, Beckner \cite[Theorem 2]{MR1254832}, and Yafaev \cite[Theorem 2.9]{MR1717839} in arbitrary dimensions. Herbst and Beckner point out that the \textit{Hardy constant} $\kappa_{(1-\alpha)/2}$ in \cref{e:hardy-quad} is \textit{sharp}, i.e. the largest possible for the given class of functions $u$. See also \cite[Subsection~2.1]{MR2425175}.

In fact,
the \textit{Hardy weight} $\kappa_{(1-\alpha)/2}|x|^{-\alpha}$ in \cref{e:hardy-quad} cannot be increased on any set of positive Lebesgue measure. 
In such cases, we say that the weight is \textit{critical}, as is the form defined by the difference of the two sides of \eqref{e:hardy-quad}.
The criticality of $\kappa_{(1-\alpha)/2}|x|^{-\alpha}$ is proved in Takeda and Uemura \cite[Example 5.9]{MR4586808};  the criticality of quadratic Schr\"odinger forms closely related to \eqref{definition: quadratic form} is resolved in Miura \cite{MR4585113}.
We note that the   sharpness of Hardy constants has been widely studied in the literature; see, e.g.,  Kufner, Maligranda, and Persson \cite{MR2256532}, Abdellaoui and Bentifour \cite{MR3626031}, Dyda and Kijaczko \cite{MR4705882}, Kijaczko and Lenczewska \cite{MR4720167}, Boggarapu, Roncal, and Thangavelu \cite{MR3962190}, or Gupta \cite{MR4531775}. In contrast, 
the criticality of Hardy weights is a matter of recent progress. For the Laplacian in dimensions $d\ge 3$, see Devyver, Fraas, and, Pinchover \cite[Example 3.1]{MR3170212}.
For the \textit{discrete case}, see Keller, Lenz, and Wojciechowski \cite[Chapter 9]{MR4412542}, Keller and Nietschmann \cite{MR4612316}, Fischer \cite{MR4768506}, and Fischer \cite{MR4597627}. We also note that in the latter works, the criticality of weights or forms is obtained via the ground state representation.

 Accordingly, our \textit{second goal} is to give Hardy inequalities and prove criticality results for the forms given by \eqref{definition: quadratic formg}, that is, with rather general symmetric kernels $k$ homogeneous of degree $-\alpha-1$ and bounded by a multiple of $\Mass$. These are given in Corollary~\ref{corollary: case p = 2 critical weight}. 
 Notably, the asymmetric analogues of the power functions \( h \) are parametrized not only by \( \beta \), but also by an asymmetry parameter \( s \in (0, \infty) \).
They allow for \textit{balancing} the (restrictions of the) Hardy weights on the right and left half-lines of $\R$: one can be increased at the expense of the other, which may even become negative.
The proof of criticality is based on constructing a suitable sequence of functions $u$ approximating the ground state, which are then used in the ground state representation; see \eqref{e.da}.

 Let us remark that this work started with questions on  the Servadei--Valdinoci form
\begin{align}\label{e.SVf}
    S[u] := \frac{1}{2} \iint\limits_{\substack{x > 0 \\ \text{or } y > 0}} \bigl(u(x) - u(y)\bigr)^2\, \mathcal{A}_\alpha \nu_\alpha(x,y)\, dx\, dy.
\end{align}
Of course, $S[u]$ is a special case of \eqref{definition: quadratic formg} with $
k(x,y) = \mathcal{A}_\alpha\, |x - y|^{-\alpha - 1} \left(1 - \mathbf{1}_{\{x < 0\}}\, \mathbf{1}_{\{y < 0\}} \right)
$.
Such quadratic forms, defined via integration on $\Rd\times \Rd \setminus D\times D$ for rather general domains $D$, recently appeared in the theory of nonlocal PDEs. They improve the variational approach to solving the Dirichlet or Neumann problems on domains for the fractional Laplacian and other nonlocal operators. This is so because the presumed finiteness of the form $S$ puts minimal assumptions on the regularity of solutions outside of $D$; see Bogdan, Grzywny, Pietruska-Pałuba, and Rutkowski \cite{MR4088505} or just the discussion in \cite[Introduction]{MR4589708} by the same authors.
The forms were introduced  in Servadei and Valdinoci \cite{MR2879266, MR3002745}.
Then they were used in Ros-Oton \cite[(3.1)]{MR3447732} and Dipierro, Ros-Oton and Valdinoci \cite[p. 379]{MR3651008}; see also Felsinger, Kassmann and Voigt \cite[Definition 2.1 (ii)]{MR3318251}.
Recently, the specific form \eqref{e.SVf} appeared in Bogdan, Fafuła, and Sztonyk \cite{BogdanFafulaSztonyk2025} as the Dirichlet form of a certain Markov process on \( \mathbb{R} \).
 Moreover, in \cite[Theorem 1.2.1]{BogdanFafulaSztonyk2025}, the authors gave a Hardy inequality for $S$ with an asymmetric weight on $\R$. In fact, the \textit{initial goal} of this work was to verify whether the Hardy constant  in \cite{BogdanFafulaSztonyk2025} is sharp and whether the Hardy weight can be made symmetric.
 Both questions are answered in the affirmative in Subsection~\ref{ss.SV}, as applications of the results presented in Section~\ref{section : main results}.

Last, but not least, we are interested in counterparts of these developments in the setting of $L^p$. Namely,
for  $p \in (1, \infty)$, $k$ as above,  and (Borel measurable) $u: \R \to \R$,
we define the Sobolev--Bregman form, or the $p$-form,
\begin{equation}\label{e:dEp}
\E_{k, p}[u] :=
\frac{1}{2}
\int_{\R} \int_{\R} (u(x)-u(y)) (u(x)^{\langle p - 1 \rangle}  -u(y)^{\langle p - 1 \rangle} )\,k(x,y) \,dy \,dx.
\end{equation}
Here and below, we use the notation
\[
a^{\langle k \rangle} := |a|^k\, \sign(a), \quad a, k \in \mathbb{R},
\]
with the convention that \( 0^{\langle k \rangle} := 0 \). Of course, $\E_{k, 2}=\E_{k}$.
Sobolev--Bregman forms more general than \eqref{e:dEp} are exact \( L^p \)-analogues of Dirichlet forms, which play a central role in the \( L^2 \)-theory of Markov semigroups; see Fukushima, Oshima, and Takeda \cite{MR1303354}, Ma and Röckner \cite{MR1214375} or Chen and Fukushima \cite{MR2849840}.
In particular, Gutowski and Kwaśnicki \cite{MR4885983} provides a Beurling-Deny decomposition for Sobolev--Bregman forms of symmetric Markov semigroups and  Bogdan, Jakubowski, Lenczewska, and Pietruska-Pałuba \cite[Theorem 3]{MR4372148} gives an application to $L^p$-norms of certain Feynman-Kac semigroups, which also motivate our development. See also \cite[Subsection 1.3]{MR4372148} and \cite[Introduction]{MR4589708} for  historical context and further motivation.

Accordingly, the \textit{main goal} of the paper is to combine all the above threads in the generality of the Sobolev--Bregman forms \eqref{e:dEp}, including ground state representations and criticality of the resulting Hardy weights in $L^p(|x|^{-\alpha}dx):=L^p(\R,|x|^{-\alpha}dx)$, for all $p\in (1,\infty)$. This is accomplished in Theorems \ref{theorem: Hardy Identity} and \ref{theorem: Hardy Inequality}. 
We also allow for 
balancing of the critical Hardy weights
and
an extra additive term on the right-hand side of critical Hardy inequalities\footnote{We call the result \emph{improved Hardy inequalities}.} and
we
resolve in detail special important cases.
The results include the quadratic case \( p = 2 \) discussed above,  both in the statements and proofs, but they are not derived as consequences of that case.
%
Note that for $1<p<\infty$, the ground state representation and Hardy inequalities with sharp Hardy \textit{constants} 
for Sobolev--Bregman forms were already established for the fractional Laplacian on $\mathbb{R}^d$ 
\cite[Theorems~1 and~2]{MR4372148} and for the fractional Laplacian on the half-space 
\cite[Lemma~2.1 and Theorem~1.1]{MR4720167}. 
Apart from these overlaps, our results are new — in fact, even for $p=2$ — in the generality of the kernels $k$ considered here. The results are also definitive; of further interest, of course, are possible extensions to other classes of kernels $k$, especially in higher dimensions, and applications similar to \cite{MR4372148}, but they should follow the roadmap outlined below.

Before closing the Introduction, we should point out that the Sobolev  integral forms,
\begin{equation}\label{e.ddpf}
\iint_{\mathbb{R}^d \times \mathbb{R}^d} \frac{|u(x) - u(y)|^p}{|x - y|^{d + \alpha p/2}} \, dx \, dy,
\end{equation}
ubiquitous in Functional Analysis and  PDEs (see Di Nezza, Palatucci, and Valdinoci \cite{MR2944369}),
are fundamentally different from the Sobolev--Bregman forms; 
see \eqref{e.cHk} and \cite{MR4885983}.
In Frank and Seiringer \cite{MR2469027}, Fischer  \cite{MR4597627}, Dyda and Kijaczko \cite{MR4708667, MR4705882}, 
ground state representations are given for Sobolev forms, but the representations are, in fact, \textit{inequalities} for $p\neq 2$; see \cite[Proposition 2.3]{MR2469027}, \cite{MR4597627}, \cite[Theorem 1.2, 1.3]{MR4705882}, and  \cite[Theorem 2]{MR4708667}. In contrast, our ground state representations  for 
Sobolev--Bregman forms are exact identities for all $p\in (1,\infty)$. 


The structure of the paper is as follows. In Section~\cref{sec:Pre}, we provide necessary definitions. In Section~\cref{section : main results}, we present precise statements of the results mentioned above, with additional discussion of the case $p=2$ in Subsection~\cref{ss.p2} and proofs given in Section~\cref{sec: proofs}.
 In Section~\cref{sec:Example}, we give auxiliary results and applications, in particular to the Servadei–-Valdinoci form. 

\section*{Acknowledgments}
We thank Elvise Berchio, Matthias Keller, Yehuda Pinchover, and Luz Roncal for discussions and the invitation to the Oberwolfach Mini-Workshop Hardy Inequalities in Discrete and Continuum Settings \cite{OberwolfachReport2025-14}, which greatly stimulated our work on this paper and where some of the results were presented.
We also thank Daman Fafuła and Paweł Sztonyk for discussions on the domain of the Hardy inequality for the Servadei--Valdinoci form, Shubham Gupta for discussions on the Sobolev--Bregman forms, and Florian Fisher for inspiring talks and encouragement to use the ground state representation to prove criticality. Krzysztof Bogdan was  supported by the Opus grant 2023/51/B/ST1/02209 of National Science Center, Poland. Antoni Szczukiewicz was supported as a student by Wrocław University of Science and Technology and the Mathematisches Forschungsinstitut Oberwolfach.

\section{Preliminaries}
\label{sec:Pre}

We use "$:=$" to indicate definitions, e.g.,
$a \wedge b := \min \{ a, b\}$ and $a \vee b := \max \{ a, b\}$.
For functions $f$ and $g$, we write $f\lesssim g$ to assert that there is a number $c\in (0,\infty)$, which we then call a \textit{constant}, such that $f(x)\leq c g(x)$ for all the considered arguments $x$. All the sets, functions, and measures in the paper are assumed Borel. 
For an open subset $D\subset \mathbb R$, we let $C_c(D)$ be the space of all the continuous functions with compact support in $D$, but we abbreviate $C_c(0, \infty):=C_c((0, \infty))$. We let $\Ro := \R \setminus\{0\}$. Further notation is introduced as we proceed. Recall that $\alpha \in (0, 2)$ and let us reiterate the assumptions on the kernels $k$ in \eqref{definition: quadratic formg} and \eqref{e:dEp}.


\begin{definition}
We denote by \( K_\alpha \) the set of all the functions \( k : \mathbb{R}^2 \to [0, \infty] \) such that \( k(x, y) = k(y, x) \) for all \( x, y \in \mathbb{R} \) (symmetry), \( k(rx, ry) = r^{-\alpha - 1} k(x, y) \) for all \( x, y \in \mathbb{R} \), \( r > 0 \) (homogeneity of order \( -\alpha - 1 \)), and \( k \lesssim \Mass \) (bound by a constant multiple of $\Mass$).
\end{definition}

Below, we say that numbers \( p, q\)  are \textit{conjugate exponents} if \(p,q\in (1, \infty) \) and \( \frac{1}{p} + \frac{1}{q} = 1 \). 
Of course, the equality is equivalent to \( q = \frac{p}{p-1} \) and to \( (p-1)(q-1) = 1 \). 

For $p\in (1,\infty)$, we also consider the \textit{Bregman divergence}:
\begin{equation}
F_p(a,b) := |b|^p - |a|^p - pa^{\langle p - 1 \rangle} (b-a), \quad a, b \in \R.
\end{equation}
For instance, $F_2(a,b)= (b-a)^2$.
Since $F_p$ is the second-order Taylor remainder of the convex function $\R\ni x\mapsto |x|^p$, we get $F_p(a,b) \geq 0$.
The
\textit{symmetrization} of $F_p$ is
\begin{equation}\label{e:fp}
\frac12 (F_p(a,b) + F_p(b,a)) =\frac{p}{2} (b-a)(b^{\langle p - 1 \rangle} -a^{\langle p - 1 \rangle} ).
\end{equation}
Therefore, by the symmetry of $k$, for all $u$, we can rewrite the integral in \eqref{e:dEp} as
\begin{equation}\label{e.dEp2}
\E_{k, p}[u] =
 \frac{1}{p}\int_{\R}\int_{\R}F_p(u(x), u(y))k(x, y)\,dx\,dy.
\end{equation}
The integrands in \eqref{e.dEp2} are nonnegative so the integrals are well defined.
In fact, we can analyze $\E_{k, p}$ using  $\E_k=\E_{k, 2}$ and
\begin{equation}\label{e.cHk}
4(p-1)p^{-2}(b^{\langle p/2\rangle}-a^{\langle p/2\rangle})^2\leq (b-a)(b^{\langle p-1\rangle}-a^{\langle p-1\rangle})\leq 2(b^{\langle p/2\rangle}-a^{\langle p/2\rangle})^2.
\end{equation}
The inequality \eqref{e.cHk} 
holds true for all $p\in (1,\infty)$ and $a,b\in \R$, see, e.g., Liskevich, Perelmuter, and Semenov \cite[Lemma 2.1]{MR1407327} or \cite{MR4885983}. It does \textit{not}, however, yield  optimal Hardy weights for the Sobolev--Bregman forms from the case $p=2$; see \cite{MR4372148}.  Instead, in this paper we rely on the algebraic identity \eqref{equation: rownosc liczbowa} below. Our approach gives a road map for ground-state representations and Hardy inequalities for general Sobolev-Bregman forms.
We further refer to Bogdan, Pietruska-Pa{\l}uba, and Gutowski \cite[(2.12)-(2.14)]{MR4851904} for a discussion of various estimates of $F_p$, to Bogdan, Grzywny, Pietruska-Pa{\l}uba, and Rutkowski \cite{MR4589708} for references to applications of Bregman divergence in analysis, statistical learning, and optimization, and to Bogdan, Pietruska-Pa{\l}uba, and Kutek \cite{bogdan2024bregmanvariationsemimartingales} for a martingale connection. 

In the paper, the (critical) Hardy weights are understood  as follows.
\begin{definition}
Let $p \in (1, \infty)$, $k \in K_{\alpha}$ and $w : \R \xrightarrow[]{} \R$ be locally integrable on $\Ro$. We say that $w$ is $(k, p)$-Hardy weight if 
\begin{align}
\label{equation: general inequality}
    \E_{k, p}[u] \geq \int_{\R}|u(x)|^pw(x)dx,\qquad u \in C_c(\Ro).
\end{align}
If, moreover, $w$  cannot be increased in \eqref{equation: general inequality} on any set of positive Lebesgue measure 
then we say that $w$ is a critical $(k, p)$-Hardy weight. Similarly, we say that $w : (0, \infty) \rightarrow \R$ is a $(k, p)$-Hardy weight on the half-line if
\begin{align}
\label{equation: general inequality half-line}
    \frac{1}{p}\int_{0}^{\infty}\int_{0}^{\infty}F_p(u(x), u(y))k(x, y)\,dx\,dy \geq \int_{0}^{\infty}|u(x)|^pw(x)dx,\qquad u \in C_c(0, \infty),
\end{align}
and we say $w$
is a critical $(k, p)$-Hardy weight on the half-line if additionally it cannot be 
increased in \eqref{equation: general inequality half-line} on any set of positive Lebesgue measure.
\end{definition}
 Note that we do not require $w$ above to be nonnegative. In fact, some critical $(k, p)$-Hardy weights are not nonnegative; see Theorem \cref{theorem: sign}.

Our Hardy weights will be expressed in terms of the following quantities.\footnote{The computations motivating the definitions will appear in the proof of \eqref{equation: rownosc Kw} (Lemma~\ref{lemma: rownosc Kw}).} 
\begin{definition}
\label{definition: gamma C}
For $\rho\in (-1, \alpha)$, $r \in (0, \infty)$, and $k \in K_\alpha$, we define
\begin{align}
\begin{split}\label{e.gamma}
\gamma_{+}(k, \rho) &:= \int_{0}^{1}(1 - t^\rho)(1 - t^{\alpha - 1- \rho})k(1, t)dt,
\\
    \gamma_{-}(k, \rho) &:= \int_{0}^{1}(1 - t^\rho)(1 - t^{\alpha - 1- \rho})k(-1, -t)dt,
\end{split}
\\
\begin{split}
\label{equation: stałe rownosc hardyego}
    C_{+}(k, r, \rho) &:= \gamma_{+}(k, \rho) + \int_{0}^{\infty}k(1, -t)dt - r\int_{0}^{\infty}t^\rho k(1, -t)dt, 
    \\
    C_{-}(k, r, \rho) &:= \gamma_{-}(k, \rho) + \int_{0}^{\infty}k(-1, t)dt - \frac{1}{r}\int_{0}^{\infty}t^\rho k(-1, t)dt.
\end{split}
\end{align}
\end{definition}
For instance, $-\gamma_+(\nu_\alpha,\rho)$ is the constant $\gamma(\alpha,\rho)$ in \cite[(5.2)]{MR2006232}, see  \eqref{equation: gamma def}. 
\begin{definition}
\label{definition: D}
For $\beta \in (-p, p\alpha) \cap \left(-q, q\alpha \right)$, $s \in (0, \infty)$, conjugate exponents $p, q$, and $k \in K_\alpha$, we let
\begin{align}
\begin{split}\label{e.dD}
    D_+(k, p, s, \beta) := \frac{1}{p}C_{+}\left(k, s^{\frac{1}{q}}, \frac{\beta}{q}\right) + \frac{1}{q}C_{+}\left(k, s^{\frac{1}{p}}, \frac{\beta}{p}\right),
    \\
    D_-(k, p, s, \beta) := \frac{1}{p}C_{-}\left(k, s^{\frac{1}{q}}, \frac{\beta}{q}\right) + \frac{1}{q}C_{-}\left(k, s^{\frac{1}{p}}, \frac{\beta}{p}\right).
\end{split}
\end{align}
\end{definition}

The following will be our Hardy weights
\begin{align}\label{e.dV}
    w_{k, p, s, \beta}(x) := |x|^{-\alpha} \begin{cases}
    D_+(k, p, s, \beta), \quad x > 0,
    \\
    D_-(k, p, s, \beta), \quad x < 0.
    \end{cases}
\end{align}
Thus, $D_+$ and $D_-$ determine how \textit{balanced} is the Hardy weight.\footnote{The computations motivating  \eqref{e.dD} and \eqref{e.dV} will appear in the proof of Theorem~\cref{theorem: Hardy Identity}.}

As we shall see later on, the case of $\beta=\alpha-1$ is of special importance.
 Figures \ref{fig: D alpha=1.4} and \ref{fig: D alpha = 1} show $D_+(k, p, s, \alpha-1)$ and $D_-(k, p, s, \alpha-1)$ as functions of $s$ for the kernel $\mass$ of the Servadei--Valdinoci form \eqref{e.SVf} and $p = 3$; see the discussion  in Section \cref{ss.SV}.
\begin{figure}[H]
\begin{tikzpicture}[scale=1]
\begin{axis}[
    axis lines = middle,
    xtick={0, 1, 2},
    ytick={-0.4, 0.4},
    xticklabels={0, 1, 2},
    yticklabels={-0.4, 0.4},
    ymin=-0.4,
    ymax=0.4,
    xmin=0,
    xmax=2,
    xlabel={$s$},
    ylabel={},
    grid=none,
    legend pos=south east,
    title={},
    width=14cm,
    height=10cm,
    clip mode=individual,
    restrict y to domain=-10:10,
    unbounded coords=jump,
    samples=100,
]

\addplot[
    color=black,
    thick,
    mark=none,
    domain=0.01:2  
] table [col sep=comma, x=x, y=y] {Dp0.csv};
\addlegendentry{$D_+$}

\addplot[
    color=black,
    thick,
    mark=none,
    dashed,
    domain=0.01:2  
] table [col sep=comma, x=x, y=y] {Dm0.csv};
\addlegendentry{$D_-$}

\end{axis}
\end{tikzpicture}
\caption{Plots of  $s \mapsto D_+(\mass, 3, s, \alpha-1)$ and $s \mapsto D_-(\mass, 3, s, \alpha-1)$ for $\alpha=1$.}
\label{fig: D alpha = 1}
\end{figure}

The following functions are pivotal. They will be used for Doob conditioning in the proof of Theorem~\ref{theorem: Hardy Identity}, and, as we shall see, they also yield the Hardy weights and  constants $C_+$, $C_-$, $D_+$, and $D_-$.

\begin{definition} For $p \in (1, \infty)$, $\beta \in \R$, and $s > 0$, we define
\begin{align}
    h_{\beta, s}(x) &:= |x|^{\beta}\begin{cases}
        1, &\quad x > 0,
        \\
        s, &\quad x<0,
    \end{cases}\label{e.dhbs}
    \\
    h_{p, \beta, s} &:= h_{\beta, s}^{\frac{1}{p}} = h_{\frac{\beta}{p}, s^{\frac{1}{p}}}.
\end{align}
\end{definition}
Of course, $h_{\beta,s}$ is symmetric if and only if $s=1$. Note that if $p$ and $q$ are conjugate exponents then $h_{p, \beta, s}^{p-1} = h_{q, \beta, s}$.

\section{Main results}
\label{section : main results}
Our main results
are presented below in this section. Their proofs are either evident or postponed to Section \cref{sec: proofs}.
\begin{theorem}
\label{theorem: Hardy Identity}
For $\alpha \in (0, 2)$, conjugate exponents $p$ and $q$, $\beta \in (-p, p\alpha) \cap \left(-q, q\alpha \right)$, $s \in (0, \infty)$, $k \in K_\alpha$, and $u \in L^p(|x|^{-\alpha}dx)$, the following ground state representation holds:
\begin{align}
\label{equation: ground state representation}
\begin{split}
        \E_{k, p}[u] &= \int_{\R}|u(x)|^pw_{k, p, s, \beta}(x)dx
    \\
    &+  \frac{1}{p}\int_{\R}\int_{\R}F_p \left( \frac{u(x)}{h_{p, \beta, s}(x)}, \frac{u(y)}{h_{p, \beta, s}(y)}\right) h_{q, \beta, s}(x)h_{p, \beta, s}(y) k(x, y)\,dx\,dy.
\end{split}
\end{align}
\end{theorem}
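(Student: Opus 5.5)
Write $h:=h_{p,\beta,s}$, so that $h^{p-1}=h_{q,\beta,s}$, and set $v:=u/h$. The argument rests on a pointwise algebraic identity (the identity \eqref{equation: rownosc liczbowa} announced in Section~\ref{sec:Pre}). Put $a=u(x)$, $b=u(y)$, $A=h(x)$, $B=h(y)$. Expanding $F_p$ and using $A^{p-1}=h_q(x)$, one checks that
\[
F_p(a,b)=F_p\!\left(\tfrac aA,\tfrac bB\right)A^{p-1}B+|a|^p\Bigl(1-\tfrac{B}{A}\Bigr)+p\,a^{\langle p-1\rangle}b\Bigl(\tfrac{B^{p-1}}{A^{p-1}}\cdot 0\Bigr)\cdots
\]
Rather than trust this expansion, I will derive the identity directly. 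The aim is to write
\[
F_p(a,b)-F_p(a/A,b/B)\,A^{p-1}B=|b|^p\Bigl(1-\tfrac{A^{p-1}}{B^{p-1}}\Bigr)-p\,a^{\langle p-1\rangle}b+p|a|^p+\dots,
\]
and then symmetrize in $(x,y)$. After integration against the symmetric kernel $k$, every term involving $a^{\langle p-1\rangle}b$ must cancel. Doing the bookkeeping, the terms of the form $a^{\langle p-1\rangle}b$ appear on both sides with coefficient $-p$, because
\[
p\,(a/A)^{\langle p-1\rangle}(b/B)\,A^{p-1}B=p\,a^{\langle p-1\rangle}b,
\]
so they cancel exactly. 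What remains is
\[
F_p(a,b)-F_p(a/A,b/B)A^{p-1}B=|b|^p\Bigl(1-\tfrac{A^{p-1}}{B^{p-1}}\Bigr)+(p-1)|a|^p\Bigl(\tfrac{B}{A}-1\Bigr)\cdot(-1).
\]
Here the signs are to be fixed carefully. This is a purely local identity with no $a^{\langle p-1\rangle}b$ cross term, which is the point of Doob conditioning.

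Next I integrate $\frac1p\iint(\cdot)\,k\,dx\,dy$ and use the symmetry of $k$. This gives
\[
\E_{k,p}[u]-(\text{remainder term})=\int|u(x)|^p\,W(x)\,dx,
\]
where
\[
W(x)=\int\Bigl[\tfrac1p\Bigl(1-\tfrac{h_q(x)}{h_q(y)}\Bigr)+\tfrac1q\Bigl(1-\tfrac{h_p(y)}{h_p(x)}\Bigr)\Bigr]k(x,y)\,dy,
\]
with the weights $1/p$ and $(p-1)/p=1/q$ coming from the two terms. This integral should be understood as a principal value, since the individual pieces diverge near $y=x$ when $\alpha\ge1$. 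Writing $h_q=h_{\beta/q,\,s^{1/q}}$ and $h_p=h_{\beta/p,\,s^{1/p}}$, the task reduces to showing that for a power function $g=h_{\rho,r}$ one has
\[
\mathrm{p.v.}\int\Bigl(1-\tfrac{g(y)}{g(x)}\Bigr)k(x,y)\,dy=C_\pm(k,r,\rho)\,|x|^{-\alpha},\qquad \pm x>0,
\]
and symmetrically with $g(x)/g(y)$, the exponent $\rho$ paired appropriately. This is the content of Lemma~\ref{lemma: rownosc Kw} (the computation motivating $C_\pm$).

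To prove that formula, scale to $x=\pm1$ using homogeneity. Split the $y$-integral into the opposite half-line, which yields the terms $\int_0^\infty k(1,-t)\,dt-r\int_0^\infty t^\rho k(1,-t)\,dt$, and the same half-line. On the same half-line, the substitution $y=1/t$ together with homogeneity folds $(1,\infty)$ onto $(0,1)$ with weight $t^{\alpha-1}$. Combining the two pieces gives $(1-t^\rho)(1-t^{\alpha-1-\rho})k(1,t)$, which is integrable near $t=1$ (it vanishes to second order) and near $0$ because $\rho\in(-1,\alpha)$. These are exactly the conditions $\beta/p,\beta/q\in(-1,\alpha)$. The mixed combination of the $p$- and $q$-parts has to reproduce the definition of $D_\pm$; I expect this to work out because the folding pairs $\rho$ with $\alpha-1-\rho$.

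The main obstacle is justification rather than algebra. The term-by-term splitting involves divergent integrals when $\alpha\ge1$, and $u$ is merely in $L^p(|x|^{-\alpha}dx)$, so either side could be infinite. I would first prove the identity with $k$ replaced by the truncation $k\mathbf 1_{|x-y|>\epsilon}$. For the truncated kernel all integrals are absolutely convergent: $\int|u(x)|^p|x|^{-\alpha}\,dx<\infty$, and the weight integrals converge by the bound $k\lesssim\Mass$. Both $F_p$ terms are nonnegative, so letting $\epsilon\to0$ handles them by monotone convergence. For the weight term, I need the truncated weight $W_\epsilon(x)$ to converge to $w_{k,p,s,\beta}(x)$ with $|W_\epsilon(x)|\lesssim|x|^{-\alpha}$ uniformly in $\epsilon$. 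This follows from the symmetric-cancellation bound near $y=x$: in the scaled variable, $1-g(y)/g(x)$ is smooth near $y=x=\pm1$ and its first-order term integrates to zero over a symmetric truncation. Dominated convergence then finishes the proof, with the identity holding in $[0,\infty]$.
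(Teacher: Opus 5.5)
Your final pointwise identity (once the garbled first displays are discarded) is exactly the paper's identity \eqref{equation: rownosc liczbowa}, and your folding computation is Lemma~\ref{lemma: rownosc Kw}. The overall architecture is therefore the paper's. The gap is in the step you yourself call the main obstacle.

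The additive truncation $k_\epsilon:=k\mathbf{1}_{\{|x-y|>\epsilon\}}$ is not homogeneous. Your assertion that for it ``all integrals are absolutely convergent'' fails exactly when truncation is needed, namely $\alpha\ge 1$ (for $\alpha<1$ no truncation is needed at all). Take $k=\nu_\alpha$, $h=h_{p,\beta,s}$ with $\rho=\beta/p\neq 0$, and $x\ge 4\epsilon$. By the mean value theorem,
\[
\frac{1}{h(x)}\int_{\R}|h(x)-h(y)|\,k_\epsilon(x,y)\,dy \gtrsim \frac{|\rho|}{x}\int_{\epsilon<|y-x|<x/2}|x-y|^{-\alpha}\,dy .
\]
The right-hand side is of order $x^{-1}\epsilon^{1-\alpha}$ if $\alpha>1$ and $x^{-1}\log(x/\epsilon)$ if $\alpha=1$, not $O(x^{-\alpha})$. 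Now choose $u\in L^p(|x|^{-\alpha}dx)$ with $|u(x)|^p=x^{\alpha-1}(\log x)^{-2}$ for large $x$. Then $\frac{|u(x)|^p}{h(x)}(h(x)-h(y))\,k_\epsilon(x,y)$ and its $h_{q,\beta,s}$-analogue are not in $L^1(\R^2)$.

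Consequently you cannot split $\iint F_p(u(x),u(y))\,k_\epsilon$ into the remainder plus a weight term. Nor can you use Fubini and the symmetry of $k$ to turn the $|u(y)|^p$ term into a $|u(x)|^p$ term. The finite-$\epsilon$ identity from which your limiting argument starts is not established. Restricting first to nice $u$ would only move the difficulty into an approximation argument for both sides, which you do not supply.

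The missing idea is a scale-invariant truncation. The paper removes the cone $\{1-\varepsilon<x/y<1/(1-\varepsilon)\}$ (Lemma~\ref{lemma: aproksymacja k}), so $k_\varepsilon$ stays symmetric and homogeneous with $k_\varepsilon\lesssim(|x|\vee|y|)^{-\alpha-1}$. This buys four things:
\begin{itemize}
\item $\int|h(x)-h(y)|\,k_\varepsilon(x,y)\,dy\lesssim|x|^{\rho-\alpha}$ (Lemma~\ref{lemma: calkowalnosc h});
\item the splitting is legitimate for every $u\in L^p(|x|^{-\alpha}dx)$;
\item the weight for $k_\varepsilon$ is exactly $w_{k_\varepsilon,p,s,\beta}$, with no principal values;
\item $C_\pm(k_\varepsilon,r,\rho)\to C_\pm(k,r,\rho)$ by monotone convergence, since $(1-t^\rho)(1-t^{\alpha-1-\rho})$ has constant sign on $(0,1)$.
\end{itemize}

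Two smaller points.

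First, your reason for $|W_\epsilon(x)|\lesssim|x|^{-\alpha}$ is that the first-order term integrates to zero over a symmetric truncation. That would require $k(x,x+h)=k(x,x-h)$, which a merely measurable $k\in K_\alpha$ need not satisfy. What actually tames that term is $k(1,1/t)=t^{\alpha+1}k(1,t)$, i.e.\ symmetry plus homogeneity. With your truncation, the fold leaves a mismatch over $t\in(1-\delta,1/(1+\delta))$, $\delta=\epsilon/|x|$, of size $O(\delta^{2-\alpha})$. So the bound and the convergence $W_\epsilon\to w_{k,p,s,\beta}$ do hold, but for this reason.

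Second, after the swap $x\leftrightarrow y$ the first term of your $W$ should read $\frac1p\bigl(1-h_q(y)/h_q(x)\bigr)$. As written it produces $C_\pm(k,s^{-1/q},-\beta/q)$ instead of $C_\pm(k,s^{1/q},\beta/q)$. With the correct orientation, $D_\pm=\frac1p C_\pm(k,s^{1/q},\beta/q)+\frac1q C_\pm(k,s^{1/p},\beta/p)$ appears directly, with no pairing of $\rho$ and $\alpha-1-\rho$ involved.
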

Since $F_p\ge 0$ and $h_{q,\beta,s}\ge 0$, we immediately obtain the following Hardy inequality. 
\begin{corollary}\label{c.Hip}
 For $\alpha \in (0, 2)$, $p \in (1, \infty)$, $\beta \in (-p, p\alpha) \cap \left(-q, q\alpha \right)$, $s \in (0, \infty)$, $k \in K_\alpha$, 
\begin{align}
\label{equation: hardy inequality V}
\begin{split}
        \E_{k, p}[u] \geq \int_{\R}|u(x)|^pw_{k, p, s, \beta}(x)dx, \quad u \in L^p(|x|^{-\alpha}dx).
\end{split}
\end{align}
\end{corollary}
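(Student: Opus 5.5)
The statement is Corollary~\ref{c.Hip}. It follows from Theorem~\ref{theorem: Hardy Identity}, and I take that theorem as given. Fix $p$, let $q=p/(p-1)$ be the conjugate exponent, and fix $\beta$, $s$, $k$ as in the statement. For $u\in L^p(|x|^{-\alpha}dx)$, identity \eqref{equation: ground state representation} writes $\E_{k,p}[u]$ as
\[
\int_{\R}|u|^p w_{k,p,s,\beta}\,dx
\]
plus the double integral
\[
R[u]:=\frac1p\int_{\R}\int_{\R}F_p\Big(\frac{u(x)}{h_{p,\beta,s}(x)},\frac{u(y)}{h_{p,\beta,s}(y)}\Big)h_{q,\beta,s}(x)h_{p,\beta,s}(y)k(x,y)\,dx\,dy .
\]
So the plan is simply to show $R[u]\ge 0$ and then drop it.

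The nonnegativity of $R[u]$ comes from three facts.
\begin{itemize}
\item $F_p\ge0$, since $F_p$ is the Taylor remainder of the convex function $x\mapsto |x|^p$.
\item The weights $h_{q,\beta,s}=h_{\beta/q,s^{1/q}}$ and $h_{p,\beta,s}=h_{\beta/p,s^{1/p}}$ are positive on $\Ro$, because $s>0$.
\item $k\ge0$, since $k\in K_\alpha$.
\end{itemize}
The point $x=0$ and the diagonal are Lebesgue null sets, so the quotients $u/h_{p,\beta,s}$ are well defined almost everywhere. Hence the integrand of $R[u]$ is a nonnegative measurable function, and $R[u]\in[0,\infty]$ is well defined.

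The only delicate point is ensuring that dropping $R[u]$ is legitimate, i.e. that no expression of the form $\infty-\infty$ appears. By \eqref{e.dV}, $|w_{k,p,s,\beta}(x)|\le (|D_+|\vee|D_-|)\,|x|^{-\alpha}$, and the constants $D_\pm$ are finite in the stated range of $\beta$; this finiteness is part of the content of the theorem's hypotheses. Since $u\in L^p(|x|^{-\alpha}dx)$, the weighted integral $\int_{\R}|u|^p w_{k,p,s,\beta}\,dx$ is therefore absolutely convergent and finite. The identity then reads
\[
\E_{k,p}[u]=\text{(finite number)}+R[u]\quad\text{in }(-\infty,\infty],
\]
and $R[u]\ge0$ gives \eqref{equation: hardy inequality V}. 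This holds even when $\E_{k,p}[u]=\infty$, in which case the inequality is trivial.
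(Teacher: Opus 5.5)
Your proof is correct and is essentially the paper's: the corollary comes from the ground state representation \eqref{equation: ground state representation} by discarding the remainder, which is nonnegative because $F_p\ge 0$, $h_{p,\beta,s},h_{q,\beta,s}>0$, and $k\ge 0$. Your extra care about avoiding $\infty-\infty$ is sound, except that finiteness of $D_\pm$ is not a hypothesis of the theorem: it follows from the paper's lemma that $C_\pm(k,r,\rho)$ is finite for $\rho=\beta/p,\ \beta/q\in(-1,\alpha)$.
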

As noted above, the $\beta=\alpha-1$ plays a special role in this theory, so we denote
\begin{align}
    W_{k, p, s} := w_{k, p, s, \alpha -1}.
\end{align}
The following result complements Corollary~\ref{c.Hip}.
\begin{theorem}
\label{theorem: Hardy Inequality}
    If $\alpha \in (0, 2)$, $p \in (1, \infty)$, $s \in (0, \infty)$, and $k \in K_\alpha$, then
$W_{k, p, s}$ is a critical $(k, p)$-Hardy weight. 
\end{theorem}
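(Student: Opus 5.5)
The plan is to deduce criticality from the ground state representation of Theorem~\ref{theorem: Hardy Identity} with $\beta=\alpha-1$, using a sequence of test functions that approximate the ground state $h_{p,\alpha-1,s}$.

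\textbf{$W_{k,p,s}$ is a Hardy weight.} First check that $\beta=\alpha-1$ is admissible. Since $\alpha-1>-1>-p\vee -q$ and $\alpha-1<\alpha<p\alpha\wedge q\alpha$, we have $\beta\in(-p,p\alpha)\cap(-q,q\alpha)$. Corollary~\ref{c.Hip} then shows that $W_{k,p,s}$ is a $(k,p)$-Hardy weight, because $C_c(\Ro)\subset L^p(|x|^{-\alpha}dx)$.

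\textbf{Criticality: reduction.} Suppose $w\ge W_{k,p,s}$ is a $(k,p)$-Hardy weight with $w>W_{k,p,s}$ on a set of positive measure. Write $h_p:=h_{p,\alpha-1,s}$ and $h_q:=h_{q,\alpha-1,s}$. Fix $\psi\in C_c(\R)$ with $0\le\psi\le1$, $\psi=1$ on $[-1,1]$, $\operatorname{supp}\psi\subset(-2,2)$ and $\psi$ Lipschitz. Set
\[
\phi_n(x):=\psi\bigl(\log|x|/n\bigr),\qquad u_n:=h_p\,\phi_n\in C_c(\Ro).
\]
Apply \eqref{equation: ground state representation} to $u_n$ and subtract the Hardy inequality for $w$. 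Since $u_n/h_p=\phi_n$, this gives
\[
\int_\R |u_n|^p\,(w-W_{k,p,s})\,dx\le R_n:=\frac1p\int_\R\int_\R F_p(\phi_n(x),\phi_n(y))\,h_q(x)h_p(y)k(x,y)\,dx\,dy .
\]
As $n\to\infty$, $\phi_n\uparrow 1$. By monotone convergence the left side tends to $\int h_p^p(w-W_{k,p,s})\,dx>0$, which may be $+\infty$. So the whole argument reduces to showing $R_n\to0$.

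\textbf{Showing $R_n\to 0$ (main step).} Split $R_n$ into four quadrant contributions according to the signs of $x$ and $y$. On each quadrant substitute $|x|=e^t$ and $|y|=e^\tau$, and use $k\lesssim\nu_\alpha$ together with homogeneity.

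The total homogeneity is $(\alpha-1)/q+(\alpha-1)/p-\alpha-1+2=0$. Hence each piece is bounded by $\iint F_p(\psi(t/n),\psi(\tau/n))\,K(\tau-t)\,dt\,d\tau$. The convolution kernel has the form $K(\sigma)\lesssim e^{\sigma((\alpha-1)/p+1)}|1\mp e^\sigma|^{-1-\alpha}$, up to the factors $s^{1/p}$ and $s^{1/q}$.

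Check the tails of $K$. As $\sigma\to-\infty$ it decays like $e^{\sigma((\alpha-1)/p+1)}$, and $(\alpha-1)/p+1>0$. As $\sigma\to+\infty$ it decays like $e^{\sigma((\alpha-1)/p-\alpha)}$, and $(\alpha-1)/p-\alpha=-\alpha/q-1/p<0$. Near $\sigma=0$ it has the singularity $|\sigma|^{-1-\alpha}$; this occurs only in the same-sign quadrants, since the cross quadrants are nonsingular.

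Next bound the divergence. For $a,b\in[0,1]$ we have $F_p(a,b)\lesssim|b-a|^{2}$ when $p\ge2$, and $F_p(a,b)\lesssim|b-a|^{p}$ when $1<p<2$; more generally $F_p(a,b)\lesssim |b-a|^{p\wedge2}$. Since $\psi$ is Lipschitz, $|\psi(t/n)-\psi(\tau/n)|\lesssim \min(1,|t-\tau|/n)$. Moreover, the integrand vanishes unless $t$ or $\tau$ lies in the set $\{n\le|\cdot|\le2n\}$, which has length $\lesssim n$, or else $|t-\tau|$ is large.

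Combining these, $R_n\lesssim n\int \min(1,|\sigma|/n)^{p\wedge2}K(\sigma)\,d\sigma$. The factor $|\sigma|^{p\wedge2}$ tames the singularity $|\sigma|^{-1-\alpha}$ because $p\wedge2>\alpha$ fails only if $\alpha\ge p$. In that case I would instead use the improved bound $F_p\lesssim|b-a|^2$ valid near the diagonal where $a,b$ are close to $1$, or note that $\psi$ can be taken smooth so that differences are quadratic in $\sigma$. The result is $R_n\lesssim n\cdot n^{-(p\wedge2)}\to0$, up to exponentially small tail contributions.

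The main obstacle is exactly this estimate: controlling $F_p$ near the points where $\phi_n$ is small when $p<2$, and handling the diagonal singularity for $\alpha$ close to $2$. If the crude bound fails there, I would use $F_p(a,b)\lesssim (b-a)^2(a\vee b)^{p-2}$ together with the fact that $\phi_n^{p}$, rather than $\phi_n$, can be chosen smooth.

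Once $R_n\to0$, the left side would tend to zero while converging to a positive limit. This contradiction proves that $W_{k,p,s}$ is critical.
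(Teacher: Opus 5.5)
Your route is genuinely different from the paper's: you use logarithmic cutoffs and try to prove $R_n\to0$ directly. The route is viable, but as written the decisive estimate is proved only when $p\wedge 2>\alpha$. For $1<p\le\alpha<2$ (e.g.\ $p=1.2$, $\alpha=1.5$), you apply the crude bound $F_p(a,b)\lesssim|b-a|^p$ on a $t$-set of length $\asymp n$, which leads to $n^{1-p}\int_0^1\sigma^{p-1-\alpha}\,d\sigma=\infty$. None of your remedies is carried out, and two of them miss the actual difficulty. For $p<2$ the loss occurs where $\phi_n$ is close to $0$: there $F_p(a,b)\asymp(b-a)^2(a\vee b)^{p-2}$ is much larger than $(b-a)^2$. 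The region where $a,b$ are close to $1$ is harmless. Smoothing $\psi$ also does not help, because the first differences $\psi(\tau/n)-\psi(t/n)$ stay linear, not quadratic, in $\sigma=\tau-t$. So the argument is incomplete exactly at the step you flag as the main obstacle, and in a nonempty range of parameters.

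The clean repair is the substitution the paper itself uses. Take $\phi_n:=\chi_n^{2/p}$ with $\chi_n(x):=\psi(\log|x|/n)$, where $\psi$ is Lipschitz, even, and nonincreasing on $[0,\infty)$, so that $\phi_n\uparrow1$ (otherwise use Fatou). Lemma~\ref{lemma: comparability} gives $F_p(\phi_n(x),\phi_n(y))\le c_p(\chi_n(x)-\chi_n(y))^2$. Your kernel computation then yields
\[
R_n\lesssim \frac1n\int_{|\sigma|\le n}\sigma^2K(\sigma)\,d\sigma+\int_{|\sigma|>n}|\sigma|K(\sigma)\,d\sigma=O(1/n)
\]
for all $\alpha\in(0,2)$ and $p\in(1,\infty)$, since $\sigma^2|\sigma|^{-1-\alpha}$ is integrable at $0$. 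Your original $\phi_n$ also works if you use $F_p\lesssim(b-a)^2(a\vee b)^{p-2}$ and integrate in the distance to the edge of the support. The bound $|b-a|^p$ is then needed only on a $t$-set of length $O(|\sigma|)$, which contributes $n^{-p}\int_0^1\sigma^{p-\alpha}\,d\sigma<\infty$.

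With this repair your proof is a real alternative to the paper's. The paper also reduces to the quadratic form $\mathcal{R}_{\alpha,p}$ via $v^{\langle 2/p\rangle}$, but it uses linear-scale cutoffs $v_n$ whose energies are only bounded. It then upgrades boundedness to convergence through weak compactness in the Hilbert space $\mathbb{H}_{\alpha,p}$ and Mazur's lemma. Your logarithmic cutoffs give an explicit rate and avoid that functional-analytic step.
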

Next we  essentially specialize to the case where $k(x,y)=0$ for $x<0$ or $y<0$.
\begin{corollary}\label{c.pnH} For all $\alpha \in (0, 2)$, $p \in (1, \infty)$, $k \in K_\alpha$, and $u \in L^p((0,\infty),x^{-\alpha}dx)$,
\begin{align}
\label{equation: pos -line}
    &\frac{1}{p}\int_{0}^{\infty}\int_{0}^{\infty}F_p(u(x), u(y))k(x, y)\,dx\,dy \geq \gamma_{+}\left(k, \frac{\alpha - 1}{p}\right)\int_{0}^{\infty}\frac{|u(x)|^p}{x^{\alpha}}dx.
\end{align}
Moreover, $\gamma_{+}\left(k, \frac{\alpha - 1}{p}\right)x^{-\alpha}$ is a critical $(k, p)$-Hardy weight on the half-line.
\end{corollary}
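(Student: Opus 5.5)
Corollary~\ref{c.pnH} follows from Theorems~\ref{theorem: Hardy Identity} and~\ref{theorem: Hardy Inequality} applied to the kernel truncated to the positive quadrant,
\[
\tilde k(x,y):=k(x,y)\mathbf 1_{\{x>0,\,y>0\}}.
\]
Symmetry, homogeneity of order $-\alpha-1$ and the bound $\tilde k\le k\lesssim\Mass$ are inherited from $k$, so $\tilde k\in K_\alpha$.

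\emph{Computing the weight.} For $\tilde k$ we have $\tilde k(1,-t)=0$ and $\tilde k(-1,\pm t)=0$ for $t>0$. Hence in \eqref{equation: stałe rownosc hardyego} every cross integral vanishes and $\gamma_-(\tilde k,\cdot)=0$. This gives
\[
C_+(\tilde k,r,\rho)=\gamma_+(k,\rho),\qquad C_-\equiv 0,
\]
independently of $r$. With $\beta=\alpha-1$ (admissible, since $-p<\alpha-1<p\alpha$ and likewise for $q$), and using $\beta/p$, $\beta/q$ in \eqref{e.dD}, we obtain
\[
D_+=\tfrac1p\gamma_+\Big(k,\tfrac{\alpha-1}{q}\Big)+\tfrac1q\gamma_+\Big(k,\tfrac{\alpha-1}{p}\Big).
\]
Next I will check that $\gamma_+(k,\rho)$ is invariant under $\rho\mapsto\alpha-1-\rho$. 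This is immediate from \eqref{e.gamma}, since the integrand $(1-t^\rho)(1-t^{\alpha-1-\rho})$ is symmetric in $\rho\leftrightarrow\alpha-1-\rho$. Moreover,
\[
\alpha-1-\frac{\alpha-1}{q}=\frac{\alpha-1}{p}.
\]
Therefore $D_+=\gamma_+\big(k,\frac{\alpha-1}{p}\big)$ and $D_-=0$, so $W_{\tilde k,p,s}(x)=\gamma_+\big(k,\frac{\alpha-1}p\big)x^{-\alpha}\mathbf 1_{\{x>0\}}$ for any $s$.

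\emph{Inequality.} Given $u\in L^p((0,\infty),x^{-\alpha}dx)$, extend it by $0$ on $(-\infty,0]$. The extension lies in $L^p(|x|^{-\alpha}dx)$, and $\E_{\tilde k,p}[u]$ equals the left-hand side of \eqref{equation: pos -line}. Corollary~\ref{c.Hip} applied to $\tilde k$ then yields \eqref{equation: pos -line}.

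\emph{Criticality.} This is the only step needing care. Suppose a weight $w\ge\gamma_+ x^{-\alpha}$ on $(0,\infty)$, strictly larger on a set of positive measure, satisfies \eqref{equation: general inequality half-line}. Define $\tilde w:=w$ on $(0,\infty)$ and $\tilde w:=0$ on $(-\infty,0)$. For any $u\in C_c(\Ro)$ we have $\E_{\tilde k,p}[u]=\E_{\tilde k,p}[u\mathbf 1_{(0,\infty)}]$, since $\tilde k$ ignores negative arguments. The restriction $u\mathbf 1_{(0,\infty)}$ lies in $C_c(0,\infty)$ because the support of $u$ avoids $0$. Applying \eqref{equation: general inequality half-line} to it gives
\[
\E_{\tilde k,p}[u]\ge\int_0^\infty|u|^pw\,dx=\int_{\R}|u|^p\tilde w\,dx.
\]
So $\tilde w$ is a $(\tilde k,p)$-Hardy weight that dominates $W_{\tilde k,p,s}$ and exceeds it on a set of positive measure. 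This contradicts Theorem~\ref{theorem: Hardy Inequality} applied to $\tilde k$. The main obstacle is merely confirming that Theorem~\ref{theorem: Hardy Inequality} places no positivity or nondegeneracy requirement on $k$ beyond $\tilde k\in K_\alpha$, in particular that it tolerates $\tilde k$ vanishing off the positive quadrant. As stated, the theorem covers all of $K_\alpha$, so no further work is needed.
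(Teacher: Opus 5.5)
Your proposal is correct and follows the paper's route: the paper's one-line proof applies Theorem~\ref{theorem: Hardy Inequality} to the truncated kernel $k(x,y)\mathds{1}_{(0,\infty)}(x)\mathds{1}_{(0,\infty)}(y)$, exactly as you do. You also supply the details the paper leaves implicit: computing the weight as $\gamma_+\bigl(k,\frac{\alpha-1}{p}\bigr)x^{-\alpha}\mathds{1}_{(0,\infty)}(x)$ via the symmetry $\rho\leftrightarrow\alpha-1-\rho$ of $\gamma_+$, invoking Corollary~\ref{c.Hip} for the full class $L^p((0,\infty),x^{-\alpha}dx)$, and transferring criticality from the line to the half-line by extending a competing weight by zero.
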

Corollary \cref{c.pnH} follows directly from applying Theorem \cref{theorem: Hardy Inequality} to the kernel $$k(x, y)\mathds{1}_{(0, \infty)}(x)\mathds{1}_{(0, \infty)}(y).$$

In the same vein, we say that \( k \in K_\alpha \), with \( \alpha \in (0, 2) \), is \emph{reducible} if \( k(-1, t) = 0 \) for \text{a.e.} \( t \in (0, \infty) \). It follows easily from the homogeneity of \( k \in K_\alpha \) that \( k \) is reducible if and only if \( k(x, y) = 0 \) for \text{a.e.} \( (x, y) \in \mathbb{R}^2 \) such that \( xy < 0 \). 

We say that \( k \in K_\alpha \) is \emph{irreducible} if it is not reducible.

\begin{theorem}
\label{theorem: sign}
For $p \in (1, \infty)$ the following assertions hold.
\begin{enumerate}
    \item[(a)]
    \label{item: sign 1}
    If $k \in K_1$ then $0$ is a critical $(k, p)$-Hardy weight.
    \item[(b)]
    \label{item: sign 2}
If \( \alpha \in (0, 2) \setminus \{1\} \) and \( k \in K_\alpha \) is reducible but not zero a.e., then \( W_{k,p,s} \) is independent of \( s \), nonnegative, and strictly positive on at least one of the half-lines.

    \item[(c)]
    \label{item: sign 2b}
    If $\alpha \in (0, 2) \setminus \{1\}$  and $k \in K_\alpha$ is irreducible, then there exists a nonempty open interval $S \subset (0, \infty)$, such that $W_{k, p, s} > 0$ for all $s \in S$. Furthermore, there exist $s_0 > 0$ and  $D > 0$ such that $W_{k, p, s_0}(x) = D|x|^{-\alpha}$ for all $x\in \R$.
\end{enumerate}
\end{theorem}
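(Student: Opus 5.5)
Throughout, put $\beta=\alpha-1$ and $\rho_1:=(\alpha-1)/q$, $\rho_2:=(\alpha-1)/p$. Then $\rho_1,\rho_2\in(-1,\alpha)$, $\rho_1+\rho_2=\alpha-1$, and $\alpha-1-\rho_1=\rho_2$. The plan is to compute $D_\pm(k,p,s,\alpha-1)$ explicitly from Definitions~\ref{definition: gamma C} and \ref{definition: D}, and then to read off (a), (b), (c) from these formulas, invoking Theorem~\ref{theorem: Hardy Inequality} for criticality in (a).

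\textbf{Simplifying the formulas.} Since the integrand in \eqref{e.gamma} is symmetric under $\rho\leftrightarrow\alpha-1-\rho$, we get $\gamma_\pm(k,\rho_1)=\gamma_\pm(k,\rho_2)=:\gamma_\pm$. Define
\[
A:=\int_0^\infty k(1,-t)\,dt, \qquad B_\rho:=\int_0^\infty t^\rho k(1,-t)\,dt .
\]
By symmetry and homogeneity, $k(-1,t)=t^{-\alpha-1}k(1,-1/t)$. The substitution $u=1/t$ then gives
\[
\int_0^\infty t^\rho k(-1,t)\,dt=B_{\alpha-1-\rho}, \qquad \int_0^\infty k(-1,t)\,dt=B_{\alpha-1}=:A'.
\]
All these integrals are finite, because $k\lesssim\Mass$ and $\rho\in(-1,\alpha)$. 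Hence
\begin{align*}
D_+&=\gamma_++A-\tfrac1p s^{1/q}B_{\rho_1}-\tfrac1q s^{1/p}B_{\rho_2},\\
D_-&=\gamma_-+A'-\tfrac1p s^{-1/q}B_{\rho_2}-\tfrac1q s^{-1/p}B_{\rho_1}.
\end{align*}

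\textbf{Parts (a) and (b).} For (a), take $\alpha=1$. Then $\rho_1=\rho_2=0$, so $\gamma_\pm=0$ and $A'=B_0=A$. Consequently $D_+=A\bigl(1-\tfrac1p s^{1/q}-\tfrac1q s^{1/p}\bigr)$, which vanishes at $s=1$, and likewise $D_-(1)=0$. Thus $W_{k,p,1}=0$, and this weight is critical by Theorem~\ref{theorem: Hardy Inequality}.

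For (b), reducibility gives $A=A'=B_\rho=0$, so $W_{k,p,s}=\gamma_\pm|x|^{-\alpha}$, which does not depend on $s$. For $\alpha\ne1$, the exponents $\rho_1$ and $\rho_2$ are nonzero and have the same sign. Therefore $(1-t^{\rho_1})(1-t^{\rho_2})>0$ for $t\in(0,1)$, which gives $\gamma_\pm\ge0$. Suppose $k$ is not a.e. zero. Because $k$ vanishes a.e. where $xy<0$, homogeneity shows that $k(1,t)$ or $k(-1,-t)$ is positive on a set of positive measure of $t>0$. Using $k(1,t)=t^{-\alpha-1}k(1,1/t)$, and its analogue for $k(-1,-t)$, this set can be taken inside $(0,1)$. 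Hence $\gamma_+>0$ or $\gamma_->0$.

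\textbf{Part (c).} This is the main step. Irreducibility gives $A,A'>0$ and $B_\rho>0$. We apply Hölder's inequality with exponents $p,q$ to the measure $k(1,-t)\,dt$, writing $t^{\rho_1}=1^{1/p}(t^{\alpha-1})^{1/q}$ and $t^{\rho_2}=1^{1/q}(t^{\alpha-1})^{1/p}$. This yields
\[
B_{\rho_1}<A^{1/p}A'^{1/q}, \qquad B_{\rho_2}<A^{1/q}A'^{1/p}.
\]
The inequalities are strict because $t^{\alpha-1}$ is not a.e. constant on the support of this measure when $\alpha\ne1$. Now choose $s=A/A'$. The subtracted terms in $D_+$ are then strictly less than $A/p+A/q=A$, and those in $D_-$ are strictly less than $A'$. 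Since $\gamma_\pm\ge0$, both $D_+>0$ and $D_->0$ at this $s$. By continuity in $s$, both stay positive on an open interval $S$.

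For the second claim, note that $D_+$ is continuous and strictly decreasing in $s$, tending to $\gamma_++A>0$ as $s\to0$ and to $-\infty$ as $s\to\infty$. Likewise $D_-$ is strictly increasing, tending to $-\infty$ as $s\to0$ and to $\gamma_-+A'>0$ as $s\to\infty$. Let $s_+$ and $s_-$ be their unique zeros. By the previous step, $s_-<A/A'<s_+$. The difference $D_+-D_-$ is strictly decreasing, positive at $s_-$ and negative at $s_+$. So it has a zero $s_0\in(s_-,s_+)$, where $D_+(s_0)=D_-(s_0)=:D>0$. Then $W_{k,p,s_0}=D|x|^{-\alpha}$. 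The only delicate points are the strictness in Hölder's inequality and the finiteness of the $B_\rho$, which follows from $\rho\in(-1,\alpha)$.
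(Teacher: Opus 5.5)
Your proof is correct. Parts (a) and (b) follow the paper's argument; for (b) you also supply the detail the paper omits, namely why $\gamma_+$ or $\gamma_-$ is strictly positive. Part (c), however, takes a genuinely different route.

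The paper first defines $s_0$ as the unique crossing point of the monotone functions $s\mapsto D_\pm(k,p,s,\alpha-1)$. It then invokes criticality (Theorem~\ref{theorem: Hardy Inequality}) to get $D\ge 0$. Finally, it excludes $D=0$ using the identity
\[
D_+(s)+sD_-(s)=\gamma_++s\gamma_-+\int_0^\infty\bigl(1-(st^{\alpha-1})^{1/p}\bigr)\bigl(1-(st^{\alpha-1})^{1/q}\bigr)k(1,-t)\,dt .
\]
The integrand here is pointwise nonnegative and vanishes only where $st^{\alpha-1}=1$. The interval $S$ is then taken to be a neighborhood of $s_0$.

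You instead use strict H\"older to show that both $D_+$ and $D_-$ are positive at the explicit point $s=A/A'$. You then locate $s_0\in(s_-,s_+)$ by the intermediate value theorem.

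Your route buys three things:
\begin{itemize}
\item It is self-contained and makes no appeal to criticality. The paper does not really need criticality either, since its identity at $s_0$ gives $(1+s_0)D>0$ directly.
\item It exhibits a concrete point of $S$.
\item It identifies the maximal interval $S=(s_-,s_+)$ on which $W_{k,p,s}>0$.
\end{itemize}

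The underlying inequality is essentially the same in both proofs. Your H\"older bounds followed by Young's inequality give $s^{1/q}B_{\rho_1}+s^{1/p}B_{\rho_2}<A+sA'$ for every $s$. This is exactly the integrated form of the paper's pointwise bound $(1-x^{1/p})(1-x^{1/q})\ge 0$, and Young is sharp precisely at $s=A/A'$.

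Your version is slightly finer, because it bounds the $D_+$ and $D_-$ contributions separately. The paper's identity controls only the combination $D_++sD_-$, which is why the paper has to argue at the crossing point.
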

Of course, (c) above means 
that $D_+(k, p, s_0, \alpha - 1) = D_-(k, p, s_0, \alpha - 1)$ in \eqref{e.dV}; this is illustrated by Figure~\ref{fig: D alpha=1.4}. 
 \begin{figure}[H]
\begin{tikzpicture}[scale=1]
\begin{axis}[
    axis lines = middle,
    xtick={0, 1.20026, 1, 2},
    ytick={-0.3, 0.027093, 0.3},
    xticklabels={0, $s_0$, 1, 2},
    yticklabels={-0.3, $D$, 0.3},
    ymin=-0.3,
    ymax=0.3,
    xmin=0,
    xmax=2,
    xlabel={$s$},
    ylabel={},
    grid=none,
    legend pos=south east,
    title={},
    width=14cm,
    height=10cm,
    clip mode=individual,
    restrict y to domain=-10:10,
    unbounded coords=jump
]

\addplot[
    color=black,
    thick,
    mark=none,
] table [col sep=comma, x=x, y=y] {Dp.csv};
\addlegendentry{$D_+$}

\addplot[
    color=black,
    thick,
    mark=none,
    dashed,
    domain=0.01:2
] table [col sep=comma, x=x, y=y] {Dm.csv};
\addlegendentry{$D_-$}

\addplot[
    black,
    semithick,
    domain=0:0.027093,
    samples=2,
    dotted,
] ({1.20026}, x);

\addplot[
    black,
    semithick,
    domain=0:1.20026,
    samples=2,
    dotted,
] (x, {0.027093});

\end{axis}
\end{tikzpicture}
\caption{Plots of  $s \mapsto D_+(\mass, 3, s, \alpha-1)$ and $s \mapsto D_-(\mass, 3, s, \alpha-1)$ for $\alpha=1.4$.}
\label{fig: D alpha=1.4}
\end{figure}
\subsection{Improved critical Hardy inequality}
The title of this subsection may seem like a misnomer, but it refers to an additional \textit{additive} term in our critical Hardy inequalities, obtained by optimizing the right-hand side of \eqref{equation: hardy inequality V} over $s \in (0,\infty)$. 
\begin{definition} For $\alpha \in (0, 2)$, $p \in (1, \infty)$,  $k \in K_\alpha$, and 
function $u$, we let
\begin{align*}
    l_{k, p} &:= \int_{0}^{\infty}t^{\frac{\alpha-1}{p}}k\left(1,-t\right)dt,
    \\
    I_{\alpha, p, +}(u) &:= \int_{0}^{\infty}\frac{|u(x)|^p}{|x|^{\alpha}}dx,
    \\
    I_{\alpha, p, -}(u) &:= \int_{-\infty}^{0}\frac{|u(x)|^p}{|x|^{\alpha}}dx.
\end{align*}
\end{definition}

The following result yields a nonnegative term $\mathcal{G}_{k,p}(u)$ that can be added to the right-hand side in \cref{equation: hardy inequality V} even with the critical $(k,p)$-Hardy weight $W_{k,p,1}$.

\begin{proposition}
\label{proposition: extra term}
For $\alpha \in (0, 2)$, $p \in (1, \infty)$, $k \in K_\alpha$, and $u \in L^p(|x|^{-\alpha}dx)$,
\begin{align}
\label{equation: corrected inequality}
    &\sup_{s>0}\int_{\R} |u(x)|^p W_{k,p,s}(x)\,dx = \int_{\R}|u(x)|^p W_{k, p, 1}(x)\,dx + \mathcal{G}_{k, p}(u),
\end{align}
where
\begin{align*}
    \mathcal{G}_{k, p}(u) := l_{q}\left(\frac{I_{+}}{p}+\frac{I_{-}}{q}-I_{+}^{\frac{1}{p}}I_{-}^{\frac{1}{q}}\right)+\ l_{p}\left(\frac{I_{+}}{q}+\frac{I_{-}}{p}-I_{+}^{\frac{1}{q}}I_{-}^{\frac{1}{p}}\right)\geq 0,
\end{align*}
and
\begin{align*}
    l_p := l_{k, p}, \quad l_q := l_{k, q}, \quad I_+ := I_{\alpha, p, +}(u), \quad I_- := I_{\alpha, p, -}(u).
\end{align*}
\end{proposition}
Note that $\mathcal{G}_{k,p}(u)=0$ if $u$ is symmetric so this terms quantifies asymmetry of $u$.
\begin{corollary}\label{c.pnH}
For $\alpha \in (0, 2)$, $p \in (1, \infty)$, $k \in K_\alpha$, and $u \in L^p(|x|^{-\alpha}dx)$,
\begin{align}
\label{e.iHi} 
    \E_{k, p}[u] &\geq 
 \int_{\R}|u(x)|^p W_{k, p, 1}(x)\,dx + \mathcal{G}_{k, p}(u).
\end{align}    
\end{corollary}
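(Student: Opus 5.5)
This corollary follows by combining Corollary~\ref{c.Hip} with Proposition~\ref{proposition: extra term}.

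Fix $u\in L^p(|x|^{-\alpha}dx)$. First apply Corollary~\ref{c.Hip} with $\beta=\alpha-1$. We must check that $\alpha-1\in(-p,p\alpha)\cap(-q,q\alpha)$. Since $\alpha\in(0,2)$, we have $\alpha-1>-1>-p$ and $\alpha-1>-q$. We also have $\alpha-1<\alpha<p\alpha$ and $\alpha-1<q\alpha$, because $p,q>1$. Hence, for every $s\in(0,\infty)$,
\begin{align*}
\E_{k,p}[u]\ \geq\ \int_{\R}|u(x)|^p W_{k,p,s}(x)\,dx .
\end{align*}

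The left-hand side does not depend on $s$, so we may take the supremum over $s>0$ on the right. This gives $\E_{k,p}[u]\ge \sup_{s>0}\int_{\R}|u|^pW_{k,p,s}\,dx$. Now invoke Proposition~\ref{proposition: extra term}, which evaluates this supremum as $\int_{\R}|u|^pW_{k,p,1}\,dx+\mathcal{G}_{k,p}(u)$. This yields \eqref{e.iHi}.

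The only delicate point is that all quantities must be finite, so that no $\infty-\infty$ or undefined integral arises. Since $u\in L^p(|x|^{-\alpha}dx)$, we have $I_\pm<\infty$. Because $W_{k,p,s}$ is a constant multiple of $|x|^{-\alpha}$ on each half-line, every integral $\int|u|^pW_{k,p,s}\,dx$ is therefore finite. Finiteness of the constants $D_\pm$, and hence of $l_{k,p}$ and $l_{k,q}$, for $\beta=\alpha-1$ is part of the content of the earlier results. If $\E_{k,p}[u]=\infty$, the inequality is trivial. There is no genuine obstacle here: the proof is a direct chaining of the two cited results.
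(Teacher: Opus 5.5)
Your proposal is correct and is exactly the paper's argument: Corollary~\ref{c.Hip} with $\beta=\alpha-1$ gives $\E_{k,p}[u]\ge\int_{\R}|u|^pW_{k,p,s}\,dx$ for every $s>0$, and Proposition~\ref{proposition: extra term} evaluates the supremum over $s$ of the right-hand side. One small correction to your finiteness remark: finiteness of $l_{k,p}$ and $l_{k,q}$ does not follow from finiteness of $D_\pm$; it comes directly from the finiteness lemma for $C_\pm$, whose proof bounds each integral $\int_0^\infty t^\rho k(1,-t)\,dt$ separately, here with $\rho=(\alpha-1)/p$ and $\rho=(\alpha-1)/q$, both in $(-1,\alpha)$.
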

The above \textit{improved} critical Hardy inequality directly follows from Proposition \cref{proposition: extra term}. We thus get a nontrivial lower bound even in the case of $\alpha=1$:
\begin{align*}
    \E_{k, p}[u] \geq \mathcal{G}_{k, p}(u),
\end{align*}
where $\mathcal{G}_{k, p}(u) > 0$ if
  $I_{\alpha, p, +}(u) \neq   I_{\alpha, p, -}(u)$ and $k$ is irreducible; otherwise, $\mathcal{G}_{k, p}(u) = 0$.

\subsection{The case of $p = 2$}\label{ss.p2} In this subsection, we present our main results in the special case $p=2$, where the notation can be simplified.
 For $\alpha \in (0, 2)$, $k \in K_\alpha$, $\rho \in (-1, \alpha)$, and $r \in (0, \infty)$, we let
\begin{align*}
    m_{k, r, \rho}(x) := w_{k,2,r^2,2\rho}(x) = |x|^{-\alpha}\begin{cases}
        C_+(k, r, \rho), &\quad x \geq 0,
        \\
        C_-(k, r, \rho), &\quad x < 0.
    \end{cases}
\end{align*}
We first give the resulting ground state representation and criticality.
\begin{corollary}
\label{corollary: case p = 2 ground state}
For $\alpha \in (0, 2)$, $\rho \in (-1, \alpha)$, $r \in (0, \infty)$, and $k \in K_\alpha$,
\begin{align}
\begin{split}
        \E_{k}[u] &=  \int_{\R}u^2(x)m_{k, r, \rho}(x)dx
    \\
    &+ \frac{1}{2}\int_{\R}\int_{\R} \left( \frac{u(x)}{h_{\rho, r}(x)} - \frac{u(y)}{h_{\rho, r}(y)}\right)^2 h_{\rho, r}(x)h_{\rho, r}(y) k(x, y)\,dx\,dy, \quad u \in L^2(|x|^{-\alpha}dx).
\end{split}
\end{align}
\end{corollary}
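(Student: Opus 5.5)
This is a specialization of Theorem~\ref{theorem: Hardy Identity} to $p=q=2$. Given $\rho\in(-1,\alpha)$ and $r\in(0,\infty)$, I would set $\beta:=2\rho$ and $s:=r^2$ and then match every ingredient of \eqref{equation: ground state representation} with the claimed formula.

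First, the parameter range. For $p=q=2$ the admissible set $(-p,p\alpha)\cap(-q,q\alpha)$ is $(-2,2\alpha)$, and $\beta=2\rho\in(-2,2\alpha)$ holds exactly when $\rho\in(-1,\alpha)$. So the theorem applies to every $u\in L^2(|x|^{-\alpha}dx)$.

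Next, the left-hand side and the weight. Since $\E_{k,2}=\E_k$, the left-hand sides agree. For the weight, \eqref{e.dD} gives
\[
D_\pm(k,2,r^2,2\rho)=\tfrac12 C_\pm(k,(r^2)^{1/2},\rho)+\tfrac12 C_\pm(k,(r^2)^{1/2},\rho)=C_\pm(k,r,\rho).
\]
Therefore $w_{k,2,r^2,2\rho}=m_{k,r,\rho}$ almost everywhere. The definitions differ only at the single point $x=0$, which is a null set. This is also the definition of $m_{k,r,\rho}$ itself.

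Then the remainder term. We have $h_{2,2\rho,r^2}=h_{2\rho,r^2}^{1/2}=h_{\rho,r}$, and $h_{q,\beta,s}=h_{2,\beta,s}$ is the same function. Since $F_2(a,b)=(b-a)^2$, the double integral in \eqref{equation: ground state representation} becomes
\[
\frac12\int_\R\int_\R\Big(\frac{u(y)}{h_{\rho,r}(y)}-\frac{u(x)}{h_{\rho,r}(x)}\Big)^2 h_{\rho,r}(x)h_{\rho,r}(y)k(x,y)\,dx\,dy,
\]
which is exactly the stated term.

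There is no genuine obstacle here. The only points needing care are the bookkeeping of the reparametrization $\beta=2\rho$, $s=r^2$ (in particular that $s^{1/p}=s^{1/q}=r$) and noting that the $x=0$ discrepancy in the weight is harmless.
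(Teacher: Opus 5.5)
Your proposal is correct and is essentially the paper's own argument: the paper gives no separate proof, since it defines $m_{k,r,\rho}:=w_{k,2,r^2,2\rho}$ and the corollary is just Theorem~\ref{theorem: Hardy Identity} with $p=q=2$, $\beta=2\rho$, $s=r^2$. Your checks ($2\rho\in(-2,2\alpha)$, $D_\pm(k,2,r^2,2\rho)=C_\pm(k,r,\rho)$, $h_{2,2\rho,r^2}=h_{\rho,r}$, and $F_2(a,b)=(b-a)^2$) are exactly the bookkeeping this specialization requires.
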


\begin{corollary}
\label{corollary: case p = 2 critical weight}
For $\alpha \in (0, 2)$, $r \in (0, \infty)$, and $k \in K_\alpha$,
\begin{align*}
    M_{k, r} := m_{k, r, \frac{\alpha - 1}{2}}
\end{align*}
is a critical $(k, 2)$-Hardy weight.
    
\end{corollary}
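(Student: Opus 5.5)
This is the case $p=2$ of Theorem~\ref{theorem: Hardy Inequality}, so I would derive it by matching parameters rather than proving criticality anew. For $p=q=2$ the conjugate exponents coincide, and by Definition~\ref{definition: D},
\[
D_\pm(k,2,s,\beta)=\tfrac12 C_\pm\bigl(k,s^{1/2},\beta/2\bigr)+\tfrac12 C_\pm\bigl(k,s^{1/2},\beta/2\bigr)=C_\pm\bigl(k,s^{1/2},\beta/2\bigr).
\]
Hence, with $s=r^2$ and $\beta=2\rho$, we get $w_{k,2,r^2,2\rho}=m_{k,r,\rho}$, which is the definition of $m_{k,r,\rho}$. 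The point $x=0$ has measure zero, so the convention there is irrelevant.

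Take $\rho=(\alpha-1)/2$, so $\beta=2\rho=\alpha-1$. This gives
\[
M_{k,r}=m_{k,r,\frac{\alpha-1}{2}}=w_{k,2,r^2,\alpha-1}=W_{k,2,r^2}.
\]
Before invoking the theorem I would check the admissible ranges. We need $\rho=(\alpha-1)/2\in(-1,\alpha)$; this holds because $-1<(\alpha-1)/2$ is equivalent to $\alpha>-1$, and $(\alpha-1)/2<\alpha$ is equivalent to $\alpha>-1$. We also need $\beta=\alpha-1\in(-2,2\alpha)$, which is equivalent to the same conditions. Both hold for all $\alpha\in(0,2)$.

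Theorem~\ref{theorem: Hardy Inequality}, applied with $p=2$ and $s=r^2\in(0,\infty)$, then states that $W_{k,2,r^2}$ is a critical $(k,2)$-Hardy weight. Since $\E_{k,2}=\E_k$, this is exactly the claim for $M_{k,r}$.

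There is no real obstacle here. The only step that needs care is the bookkeeping of the parametrization, namely that $h_{2,\beta,s}=h_{\beta/2,s^{1/2}}=h_{\rho,r}$, so that the exponents and asymmetry parameters match. If one preferred a self-contained argument, one would instead use Corollary~\ref{corollary: case p = 2 ground state} with the approximating sequence of the ground state $h_{\rho,r}$. That route just reproduces the proof of the main theorem and is unnecessary.
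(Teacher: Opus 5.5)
Your proposal is correct and follows the paper's route. The paper treats this corollary as an evident specialization of Theorem~\ref{theorem: Hardy Inequality}: by definition $m_{k,r,\rho}=w_{k,2,r^2,2\rho}$, so $M_{k,r}=W_{k,2,r^2}$. Your bookkeeping checks out: with $p=q=2$ the quantities $D_\pm$ reduce to $C_\pm(k,r,\rho)$, and $\rho=(\alpha-1)/2$ lies in the admissible range.
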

We next propose a more explicit formula for the constant $D$ in Theorem \cref{theorem: sign} for $p=2$.
\begin{corollary}
\label{corollary: case p = 2 constant}
For $\alpha \in (0, 2)$ and irreducible $k \in K_\alpha$,
\begin{align*}
    M_{k, r_0}(x) = D|x|^{-\alpha},
\end{align*}
where
\begin{align*}
    r_0 &:= \frac{a}{2d} + \sqrt{1 + \left(\frac{a}{2d}\right)^2},
    \\
    D &:= \frac{1}{2}\left(b - \sqrt{a^2 + 4d^2} \right),
\end{align*}
with
\begin{align*}
    a &:= \int_{0}^{1}\left(1-t^{\frac{\alpha-1}{2}}\right)^2(k(1, t) - k(-1, -t))dt + \int_{0}^{\infty}(1-t^{\alpha-1})k(1, -t)dt,
    \\
    b &:= \int_{0}^{1}\left(1-t^{\frac{\alpha-1}{2}}\right)^2(k(1, t) + k(-1, -t))dt + \int_{0}^{\infty}(1 + t^{\alpha-1})k(1, -t)dt,
    \\
    d &:=\int_{0}^{\infty}t^{\frac{\alpha-1}{2}}k(1, -t)dt.
\end{align*}
\end{corollary}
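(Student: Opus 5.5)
The plan is to compute the two constants $C_+(k,r,\rho)$ and $C_-(k,r,\rho)$ explicitly at $\rho=\frac{\alpha-1}{2}$, and then choose $r$ so that they coincide. By the definition of $m_{k,r,\rho}$, this gives $M_{k,r}(x)=C|x|^{-\alpha}$ with $C$ the common value. All integrals involved are finite because $k\lesssim\Mass$ and $\rho=\frac{\alpha-1}{2}\in(-1,\alpha)$, as in Definition~\ref{definition: gamma C}.

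\textbf{Step 1: rewrite the $C_-$ integrals through $k(1,-t)$.} The integrals in $C_-$ involve $k(-1,t)$, and I will express them using $k(1,-t)$. By symmetry and homogeneity,
\[
k(-1,t)=k(t,-1)=t^{-\alpha-1}k(1,-1/t),\qquad t>0.
\]
The substitution $u=1/t$ then gives
\[
\int_0^\infty k(-1,t)\,dt=\int_0^\infty u^{\alpha-1}k(1,-u)\,du,
\qquad
\int_0^\infty t^{\rho}k(-1,t)\,dt=\int_0^\infty u^{\alpha-1-\rho}k(1,-u)\,du .
\]
For $\rho=\frac{\alpha-1}{2}$ we have $\alpha-1-\rho=\rho$, so the last integral equals $d$. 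In the same way, $\gamma_\pm(k,\rho)=\int_0^1(1-t^{\frac{\alpha-1}{2}})^2k(\pm1,\pm t)\,dt$.

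\textbf{Step 2: express $C_\pm$ through $a$, $b$, $d$.} Set $P:=\gamma_+(k,\rho)+\int_0^\infty k(1,-t)\,dt$ and $N:=\gamma_-(k,\rho)+\int_0^\infty t^{\alpha-1}k(1,-t)\,dt$. Then
\[
C_+(k,r,\rho)=P-rd,\qquad C_-(k,r,\rho)=N-\frac{d}{r},
\]
and, comparing with the definitions in the statement, $a=P-N$ and $b=P+N$.

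\textbf{Step 3: show $d>0$.} Irreducibility means $k(-1,\cdot)$ is not a.e.\ zero on $(0,\infty)$. By the identity in Step 1, $k(1,-\cdot)$ is then not a.e.\ zero either. Since $t^{\rho}>0$, this gives $d>0$. This is the only point where irreducibility is used, and it is what makes the next step solvable.

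\textbf{Step 4: balance the weights.} The condition $C_+=C_-$ reads $a=d(r-1/r)$, that is, $r^2-\frac{a}{d}r-1=0$. Its unique positive root is
\[
r_0=\frac{a}{2d}+\sqrt{1+\Bigl(\frac{a}{2d}\Bigr)^2}.
\]
For the other root, $1/r_0=-\frac{a}{2d}+\sqrt{1+(\frac{a}{2d})^2}$, so $r_0+1/r_0=2\sqrt{1+(\frac{a}{2d})^2}$ and hence $d(r_0+1/r_0)=\sqrt{a^2+4d^2}$. Therefore the common value is
\[
D=\tfrac12(C_++C_-)=\tfrac12\bigl(b-d(r_0+r_0^{-1})\bigr)=\tfrac12\bigl(b-\sqrt{a^2+4d^2}\bigr),
\]
which gives $M_{k,r_0}(x)=D|x|^{-\alpha}$ as claimed.

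The only mildly delicate step is Step 1, namely getting the substitution exponents right and noticing the self-duality $\alpha-1-\rho=\rho$. The statement does not ask for the sign of $D$. If positivity were wanted, for $\alpha\neq1$ it would follow by combining this with Theorem~\ref{theorem: sign}(c) and the uniqueness of the positive root $r_0$.
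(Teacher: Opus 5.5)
Your proof is correct and is essentially the computation the paper has in mind; the paper lists this corollary among the evident consequences and gives no separate proof. The steps are: evaluate $C_\pm(k,r,\frac{\alpha-1}{2})$, move the $k(-1,t)$-integrals to $k(1,-t)$-integrals by $t\mapsto 1/t$ as in the proof of Theorem~\ref{theorem: sign}(c), and solve the quadratic $C_+=C_-$ in $r$. Your use of irreducibility only to guarantee $d>0$ and the identity $d(r_0+r_0^{-1})=\sqrt{a^2+4d^2}$ are both right, and the argument also covers $\alpha=1$, where $a=0$, $r_0=1$ and $D=0$.
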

In passing, we remark that
class \(L^p(|x|^{-\alpha}\,dx)\) used in our paper
appears to be optimal 
and more natural than, say, \(L^p(dx)\). 
For instance, if \(u \in L^p(|x|^{-\alpha}\,dx)\), then the first term on the right-hand side 
of \eqref{equation: hardy inequality V} is finite. 
Moreover, the use of \(L^p(|x|^{-\alpha}\,dx)\) allows us to cover all \(\alpha \in (0,2)\) 
in our results, in contrast to \eqref{e:p2}. 
Accordingly, the next statement should be compared with \eqref{e:p2}.

\begin{corollary}\label{c.La2}
If $\alpha \in (0, 2)$ then
$\kappa_{(1-\alpha)/2}|x|^{-\alpha}$ is a critical Hardy weight in \eqref{e:hardy-quad}, and \eqref{e:p2} holds true for all
$\beta \in (-1, \alpha)$
and
$u \in L^2(|x|^{-\alpha}dx)$.
\end{corollary}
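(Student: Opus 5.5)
We derive Corollary~\ref{c.La2} by specializing the $p=2$ results to $k=\mathcal{A}_\alpha\Mass$. This kernel lies in $K_\alpha$ and is irreducible. It is also symmetric under $x\mapsto -x$, so $k(1,t)=k(-1,-t)$ and $k(1,-t)=k(-1,t)=\mathcal A_\alpha(1+t)^{-1-\alpha}$.

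\emph{Step 1: the ground state representation.} Take $r=1$ in Corollary~\ref{corollary: case p = 2 ground state}. Then $h_{\rho,1}(x)=|x|^\rho$ and
\[
C_+(k,1,\rho)=C_-(k,1,\rho)=\gamma_+(k,\rho)+\int_0^\infty(1-t^\rho)\,k(1,-t)\,dt ,
\]
so $m_{k,1,\rho}=C(\rho)|x|^{-\alpha}$ is symmetric. With $\beta=\rho\in(-1,\alpha)$, this gives \eqref{e:p2} for all $u\in L^2(|x|^{-\alpha}dx)$, provided $C(\beta)=\kappa_{-\beta}$. Note that $\delta=-\beta\in(-\alpha,1)$, which is exactly the range in \eqref{e.dkb}.

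\emph{Step 2: identifying the constant.} I expect this to be the main obstacle. I would first check the range $0<\alpha<1$, $\alpha-1\le\beta\le0$. There, \eqref{e:p2} is already known from \cite{MR3460023,MR2425175}. For $u\in C_c^\infty(\Ro)$ both representations hold, and subtracting them gives
\[
\int u^2\,(C(\beta)-\kappa_{-\beta})\,|x|^{-\alpha}\,dx=0 ,
\]
so $C(\beta)=\kappa_{-\beta}$ on that range. To extend to all $\alpha\in(0,2)$ and $\beta\in(-1,\alpha)$, I would use analyticity. The map $\beta\mapsto C(\beta)$ is analytic on $(-1,\alpha)$: the integrands $(1-t^\beta)(1-t^{\alpha-1-\beta})(1-t)^{-1-\alpha}$ near $t=1$ and $(1-t^\beta)(1+t)^{-1-\alpha}$ at $0$ and $\infty$ are controlled locally uniformly in $\beta$. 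The map $\beta\mapsto\kappa_{-\beta}$ is meromorphic with no poles there. Analyticity in $\alpha$ of both sides handles $\alpha\ge1$.

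Alternatively, I would compute $C(\beta)$ directly. A fixed-$x$ pointwise computation gives $\Delta^{\alpha/2}|x|^\beta=-C(\beta)|x|^{\beta-\alpha}$, and the classical Riesz-potential formula for $\Delta^{\alpha/2}|x|^\beta$ (e.g.\ \cite{MR3460023}) gives $\kappa_{-\beta}$. One must check that this pointwise identity indeed underlies Definition~\ref{definition: gamma C}.

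\emph{Step 3: criticality.} By Corollary~\ref{corollary: case p = 2 critical weight} with $r=1$, the weight
\[
M_{k,1}=C\!\left(\tfrac{\alpha-1}{2}\right)|x|^{-\alpha}=\kappa_{(1-\alpha)/2}\,|x|^{-\alpha}
\]
is a critical $(k,2)$-Hardy weight. Here $\E_k$ is exactly the left side of \eqref{e:hardy-quad}. Criticality is stated for $u\in C_c(\Ro)$. Since \eqref{e:hardy-quad} holds on the larger class $L^2(|x|^{-\alpha}dx)\supset C_c(\Ro)$, any increase of the weight on a set of positive measure would already fail on $C_c(\Ro)$. Hence $\kappa_{(1-\alpha)/2}|x|^{-\alpha}$ is critical in \eqref{e:hardy-quad}, for all $\alpha\in(0,2)$.
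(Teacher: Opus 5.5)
Your proposal is correct. Its skeleton matches what the paper has in mind; the paper gives no separate proof and treats the corollary as an evident specialization. The shared steps are:
\begin{itemize}
\item apply Corollaries \ref{corollary: case p = 2 ground state} and \ref{corollary: case p = 2 critical weight} to $k=\mathcal A_\alpha\Mass$ with $r=1$;
\item note that $C_+=C_-$ because $k(x,y)=k(-x,-y)$;
\item read criticality with test functions in $C_c(\Ro)$.
\end{itemize}
The last point is in fact the only sensible reading for $\alpha>1$. There \eqref{e:hardy-quad} fails for, e.g., $u(x)=e^{-x^2}\in L^2(\R)$, whose right-hand side is infinite.

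Where you differ is the identification $C(\beta)=\kappa_{-\beta}$. The paper's toolkit gives it by direct evaluation, which is essentially your ``alternative''. Lemma \ref{lemma: gamma laplasjan}, together with $\int_0^\infty(1+t)^{-1-\alpha}dt=1/\alpha$ and $\int_0^\infty t^\beta(1+t)^{-1-\alpha}dt=B(\beta+1,\alpha-\beta)$, yields
\[
C(\beta)=\mathcal A_\alpha\, B(\beta+1,\alpha-\beta)\Bigl(\frac{\sin\bigl(\pi(\tfrac{\alpha}{2}-\beta)\bigr)}{\sin(\pi\alpha/2)}-1\Bigr).
\]
With $\mathcal A_\alpha=\Gamma(1+\alpha)\sin(\pi\alpha/2)/\pi$, the reflection and duplication formulas turn this into $\kappa_{-\beta}$ on the whole range $\beta\in(-1,\alpha)$ at once. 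At $\beta=\frac{\alpha-1}{2}$ this is Proposition \ref{proposition: constants nu^a} with $\boldsymbol{a}=\mathcal A_\alpha(1,1,1,1)$.

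Your main route is different: uniqueness of the constant where \eqref{e:p2} is already known ($0<\alpha<1$, $\alpha-1\le\beta\le 0$), followed by analytic continuation. This is valid and avoids all special-function algebra. It does, however, rest on the cited literature and on analyticity of the parameter integrals. That analyticity holds: the dominations you list survive small complex perturbations of $(\alpha,\beta)$, provided you also handle $t\to 0$ in $\gamma_+$.

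The continuation itself needs more care than ``analyticity in $\alpha$ handles $\alpha\ge 1$''. For $\beta\in[1,\alpha)$ there is no admissible $\alpha<1$, so continuing in $\alpha$ alone never reaches these points. Fix this in one of two ways:
\begin{itemize}
\item invoke joint real-analyticity on the connected open set $\{0<\alpha<2,\ -1<\beta<\alpha\}$; or
\item proceed in three steps: continue in $\beta$ for each fixed $\alpha<1$; then continue in $\alpha$ for each fixed $\beta\in(-1,1)$; then continue once more in $\beta$ for each fixed $\alpha\ge 1$.
\end{itemize}
The direct evaluation is self-contained and yields the closed form above. Your argument is softer and needs no explicit formula for $\gamma$.
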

 In this connection we also refer to
 \cite[Proposition 5]{MR3460023},
\cite[Subsection 3.1]{MR1717839}, and 
\cite[Theorem 1.2.1, Proposition 5.3.7, and Proposition 5.3.9]{BogdanFafulaSztonyk2025}. See also the discussion in Subsection~\ref{ss.uorm}

\section{Proofs}
\label{sec: proofs}
\subsection{Ground state representation}
Of course,
\begin{align*}
\sign((1 - t^\rho)(1 - t^{\alpha - 1- \rho})) = \sign(\rho(\alpha - 1- \rho)), \quad t \in (0, 1), \ \alpha, \rho \in \R.
\end{align*}
We first assert the finiteness of $C_{+}(k, r, \rho)$ and $C_{-}(k, r, \rho)$.
\begin{lemma}
For $\alpha \in (0, 2)$, $\rho \in (-1, \alpha)$, $r \in (0, \infty)$, and $k \in K_\alpha$, we have $|C_{+}(k, r, \rho)|$, $|C_{-}(k, r, \rho)| < \infty$.
\end{lemma}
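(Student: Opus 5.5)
We bound each of the three integrals in $C_\pm(k,r,\rho)$ separately, using only $0\le k\le c\,\Mass$ and the homogeneity of $k$. By the symmetry $k(x,y)=k(y,x)$ together with homogeneity, $k(-1,t)$ and $k(1,-t)$ play symmetric roles. So it suffices to treat $C_+$; the estimates for $C_-$ are identical, with $k(-1,-t)$ and $k(-1,t)$ in place of $k(1,t)$ and $k(1,-t)$. The factor $r$ (or $1/r$) is a finite positive multiplier and plays no role.

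\emph{The term $\gamma_+(k,\rho)$.} For $t\in(0,1)$ we have $k(1,t)\le c(1-t)^{-\alpha-1}$. The integrand $(1-t^\rho)(1-t^{\alpha-1-\rho})k(1,t)$ has constant sign, by the sign identity noted just above the lemma, so it suffices to show absolute integrability. We check the two endpoints separately.

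Near $t=1$, Taylor expansion gives $|1-t^\rho|\lesssim 1-t$ and $|1-t^{\alpha-1-\rho}|\lesssim 1-t$. The integrand is therefore $O\big((1-t)^{2-\alpha-1}\big)=O\big((1-t)^{1-\alpha}\big)$, which is integrable because $\alpha<2$. This is the step where the product structure matters: a single factor $1-t$ would only give $(1-t)^{-\alpha}$, which fails to be integrable for $\alpha\ge1$.

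Near $t=0$, the kernel $k(1,t)$ is bounded. The factor $|1-t^\rho|$ is $O(1+t^\rho)$, which is integrable at $0$ since $\rho>-1$. The factor $|1-t^{\alpha-1-\rho}|$ is $O(1+t^{\alpha-1-\rho})$, which is integrable since $\alpha-1-\rho>-1$, i.e.\ $\rho<\alpha$. The product is $O\big(1+t^\rho+t^{\alpha-1-\rho}+t^{\alpha-1}\big)$, and $t^{\alpha-1}$ is integrable since $\alpha>0$.

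\emph{The two half-line integrals.} For $t>0$ we have $k(1,-t)\le c(1+t)^{-\alpha-1}$, so
\[
\int_0^\infty k(1,-t)\,dt\lesssim\int_0^\infty(1+t)^{-\alpha-1}\,dt<\infty .
\]
Similarly, $\int_0^\infty t^\rho k(1,-t)\,dt\lesssim\int_0^\infty t^\rho(1+t)^{-\alpha-1}\,dt$. This integral converges at $0$ because $\rho>-1$, and at $\infty$ because $\rho-\alpha-1<-1$, i.e.\ $\rho<\alpha$.

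Summing the three finite quantities gives $|C_+(k,r,\rho)|<\infty$, and by the reflection argument above also $|C_-(k,r,\rho)|<\infty$. The only delicate point is the endpoint $t=1$ in $\gamma_\pm$, which needs both vanishing factors; everything else is a routine power count.
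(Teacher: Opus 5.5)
Your proof is correct and follows essentially the same route as the paper: bound $k$ by $c\,\nu_\alpha$, split $\gamma_\pm$ at $t=1/2$ (power counting $1+t^\rho+t^{\alpha-1-\rho}+t^{\alpha-1}$ near $0$, and the double vanishing of $(1-t^\rho)(1-t^{\alpha-1-\rho})$ near $1$, which gives $(1-t)^{1-\alpha}$), and dominate the half-line integrals by $\int_0^\infty (1\vee t)^{-\alpha-1}\,dt$ and $\int_0^\infty t^{\rho}(1\vee t)^{-\alpha-1}\,dt$. The symmetry/reflection remark and the constant-sign remark are not needed: the bound $k\le c\,\nu_\alpha$ handles $C_-$ directly, and absolute integrability suffices on its own.
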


\begin{proof}
It is sufficient to show that each integral appearing in \cref{equation: stałe rownosc hardyego} is finite. Using the inequality $k \lesssim \Mass$ and considering the cases $t \in (0, \frac{1}{2})$ and $t \in [\frac{1}{2}, 1)$, we obtain
\begin{align*}
    |\gamma_{\pm}(k, \rho)| \leq & c_1\int_{0}^{1}|(1 - t^\rho)(1 - t^{\alpha - 1- \rho})|(1 - t)^{-\alpha - 1}dt 
    \\
    \leq & c_2\int_{0}^{\frac{1}{2}}(1 + t^\rho + t^{\alpha - 1- \rho} + t^{\alpha - 1})(1 - t)^{-\alpha - 1}dt + c_3\int_{\frac{1}{2}}^{1}(1 - t)^{1-\alpha} < \infty.
\end{align*}
Furthermore,
\begin{align*}
    &\int_{0}^{\infty}k(1, -t)dt, \int_{0}^{\infty}k(-1, t)dt \leq c_2\int_0^{\infty}(1 \lor t)^{-\alpha -1}dt < \infty,
    \\
    &\int_{0}^{\infty}t^\rho k(1, -t)dt, \int_{0}^{\infty}t^\rho k(-1, t)dt \leq c_2\int_0^{\infty}t^\rho(1 \lor t)^{-\alpha -1}dt < \infty.
\end{align*}
\end{proof}
From 
the monotone convergence theorem, we obtain the following result.
\begin{corollary}
\label{collary: zbieznosc stalych}
If $\alpha \in (0, 2)$, $\rho \in (-1, \alpha)$, $r \in (0, \infty)$, $k, k_\varepsilon \in K_\alpha$ for small $\varepsilon > 0$, and $k_\varepsilon \nearrow k$ as $\varepsilon \rightarrow 0^+$, then
\[
\lim_{\varepsilon \rightarrow 0^+}C_{+}(k_\varepsilon, r, \rho) = C_{+}(k, r, \rho) \quad\mbox{ and } \quad 
\lim_{\varepsilon \rightarrow 0^+}C_{-}(k_\varepsilon, r, \rho) = C_{-}(k, r, \rho).
\]
\end{corollary}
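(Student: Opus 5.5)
The corollary should follow from monotone convergence applied to each integral in the definition of $C_\pm$, with $\gamma_\pm$ handled separately because its integrand may change sign.

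First, the last two integrals in $C_+$. Since $0\le k_\varepsilon(1,-t)\nearrow k(1,-t)$ pointwise as $\varepsilon\to0^+$, the monotone convergence theorem gives
\[
\int_0^\infty k_\varepsilon(1,-t)\,dt\to\int_0^\infty k(1,-t)\,dt,
\qquad
\int_0^\infty t^\rho k_\varepsilon(1,-t)\,dt\to\int_0^\infty t^\rho k(1,-t)\,dt .
\]
The weights $1$ and $t^\rho$ are nonnegative, so the integrands are nonnegative and increasing. By the preceding lemma applied to $k$, both limits are finite, so the subtraction with the factor $r$ causes no $\infty-\infty$ issue. The same argument works for $C_-$ with $k(-1,t)$ and the factor $1/r$.

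Next, $\gamma_\pm(k_\varepsilon,\rho)$. For fixed $\rho$, the factor $g(t):=(1-t^\rho)(1-t^{\alpha-1-\rho})$ has the constant sign $\sign(\rho(\alpha-1-\rho))$ on $(0,1)$, as noted just before the lemma. So either $g\ge0$ on $(0,1)$, or $-g\ge0$ on $(0,1)$. In both cases $\pm g(t)k_\varepsilon(1,t)$ is nonnegative and increases to $\pm g(t)k(1,t)$. Monotone convergence then gives $\gamma_+(k_\varepsilon,\rho)\to\gamma_+(k,\rho)$, and the limit is finite by the lemma. The argument for $\gamma_-$ with $k(-1,-t)$ is identical.

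Adding these finitely many finite limits gives the two asserted convergences. The only step that needs care is the sign of $g$, and the constant-sign observation handles it.
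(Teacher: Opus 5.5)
Your proposal is correct and matches the paper's argument. The paper simply cites the monotone convergence theorem, and implicitly uses the finiteness lemma and the observation $\sign\bigl((1-t^\rho)(1-t^{\alpha-1-\rho})\bigr)=\sign\bigl(\rho(\alpha-1-\rho)\bigr)$ stated just before that lemma; you spell out both, including how the sign observation handles $\gamma_\pm$.
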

We denote
    $\mu_\alpha(x, y) := (|x| \lor |y|)^{-\alpha-1}$, $x,y\in \mathbb R$.
\begin{definition} We let
    $K_\alpha^0 := \{ k \in K_\alpha : k \lesssim \mu_\alpha\}$.
\end{definition}

 Observe that $K_\alpha^{0}$ can be characterized as the set of all kernels $k \in K_\alpha$ such that for some constant $c > 0$, we have $k(1, t), k(-1, -t) \leq c$ for all $t \in (0, 1]$.

\begin{lemma}
\label{lemma: aproksymacja k}
For $\alpha \in (0, 2)$ and $k \in K_\alpha$, there exist kernels $k_\varepsilon \in K_\alpha^0$, with $\varepsilon \in (0, 1)$, such that $k_\varepsilon(x, y) \nearrow k(x, y)$ as $\varepsilon \rightarrow 0^+$ for all $x, y \in \R, x \neq y$.
\end{lemma}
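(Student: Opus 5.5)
The plan is to truncate $k$ near the diagonal by a cutoff that respects symmetry and homogeneity. Recall that $\mu_\alpha(x,y)=(|x|\vee|y|)^{-\alpha-1}$ is symmetric and homogeneous of order $-\alpha-1$. Since $|x-y|\le 2(|x|\vee|y|)$, we also have $\mu_\alpha\le 2^{\alpha+1}\Mass$. I therefore propose
\[
k_\varepsilon(x,y):=k(x,y)\wedge \varepsilon^{-1}\mu_\alpha(x,y),\qquad x,y\in\R,\ \varepsilon\in(0,1).
\]

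The steps are as follows.

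\emph{Membership in $K_\alpha$.} The minimum of two symmetric functions is symmetric. The minimum of two functions homogeneous of the same order $-\alpha-1$ is again homogeneous of that order, because $r^{-\alpha-1}$ factors out of both arguments of $\wedge$. Measurability is clear. Finally, $0\le k_\varepsilon\le k\lesssim\Mass$, so $k_\varepsilon\in K_\alpha$.

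\emph{Membership in $K_\alpha^0$.} By construction $k_\varepsilon\le\varepsilon^{-1}\mu_\alpha$, so $k_\varepsilon\lesssim\mu_\alpha$ with constant $\varepsilon^{-1}$. This is allowed, since the constant may depend on $\varepsilon$.

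\emph{Monotone convergence.} As $\varepsilon$ decreases, $\varepsilon^{-1}\mu_\alpha(x,y)$ increases. Hence $k_\varepsilon(x,y)$ is nondecreasing as $\varepsilon\to0^+$.

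For the limit, take $x\ne y$. Then $|x|\vee|y|>0$, so $\mu_\alpha(x,y)\in(0,\infty)$ and $\varepsilon^{-1}\mu_\alpha(x,y)\to\infty$. It follows that $k_\varepsilon(x,y)\to k(x,y)$. This also covers points where $k(x,y)=\infty$, since the minimum then tends to $\infty$.

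The only point needing care is the pair $x=y=0$, where $\mu_\alpha$ is infinite. This pair is excluded by the hypothesis $x\neq y$, and at that point the minimum is still well defined in $[0,\infty]$. I therefore expect no real obstacle. The one thing to check is that the cutoff is chosen homogeneous and symmetric, which is exactly why $\mu_\alpha$ is used rather than a constant or a cutoff at $|x-y|>\varepsilon$.
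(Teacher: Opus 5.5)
Your proposal is correct, but it uses a different truncation from the paper's.

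\textbf{The paper's construction.} The paper sets $k_\varepsilon:=k\mathds{1}_{U_\varepsilon}$, where $U_\varepsilon$ is the complement of the open cone $\{xy\neq 0,\ 1-\varepsilon<x/y<1/(1-\varepsilon)\}$ around the diagonal. Symmetry and homogeneity come from the invariance of $U_\varepsilon$ under swapping coordinates and under dilations. Membership in $K_\alpha^0$ is then checked through the characterization by bounds on $k(1,t)$ and $k(-1,-t)$ for $t\in(0,1]$: on $U_\varepsilon$ one has $1-t\ge\varepsilon$, hence $k_\varepsilon(1,t)\le c\,\varepsilon^{-\alpha-1}$.

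\textbf{Your construction.} You truncate in height instead, $k_\varepsilon=k\wedge\varepsilon^{-1}\mu_\alpha$. Symmetry and homogeneity are inherited from $\mu_\alpha$, and the bound $k_\varepsilon\le\varepsilon^{-1}\mu_\alpha$ holds by construction, with no geometric case check. Your $k_\varepsilon$ in fact converges to $k$ at every point of $\R^2$, not only off the diagonal. Where $\mu_\alpha<\infty$ the minimum tends to $k$, and at $(0,0)$ one has $k_\varepsilon(0,0)=k(0,0)$. The paper's $k_\varepsilon$, by contrast, vanishes identically on the diagonal, which is why the statement excludes $x=y$.

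\textbf{Comparison.} The paper's cutoff makes $k_\varepsilon$ agree with $k$ outside a thin cone and vanish inside it, a scale-invariant analogue of the principal-value cutoff $|x-y|>\varepsilon$. Yours alters $k$ only where it is large. For the only use of the lemma the two are interchangeable. That use is the monotone convergence step in the proof of Theorem~\ref{theorem: Hardy Identity}, together with Corollary~\ref{collary: zbieznosc stalych}. Both constructions give $0\le k_\varepsilon\le k$, $k_\varepsilon\in K_\alpha^0$, and monotone pointwise convergence off a null set.

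The inequality $\mu_\alpha\le 2^{\alpha+1}\nu_\alpha$ that you record is true but not needed, since $k_\varepsilon\le k$ already gives the bound by a multiple of $\nu_\alpha$.
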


\begin{proof}
Let $\varepsilon \in (0, 1)$ and define
\begin{align*}
    U_\varepsilon := \R^2 \setminus \Bigl\{(x, y) \in \R^2 : xy \neq 0,\ 1 - \varepsilon < \frac{x}{y} < \frac{1}{1 - \varepsilon} \Bigr\}.
\end{align*}
Note that $(x, y) \in U_\varepsilon$ if and only if $(y, x) \in U_\varepsilon$ and for every $r > 0$, $(rx, ry) \in U_\varepsilon$ if and only if $(x, y) \in U_\varepsilon$. Let $k_\varepsilon := k\mathds{1}_{U_\varepsilon}$. We will show that $k_\varepsilon \in K_\alpha^0$. The symmetry and homogeneity of $k_\varepsilon$ follow from the corresponding properties of $U_\varepsilon$. Additionally, $k_\varepsilon \leq k \lesssim \Mass$, hence $k_\varepsilon \in K_\alpha$. Furthermore, for $t \in (0, 1]$, we have
\[
k_\varepsilon(1, t),\ k_\varepsilon(-1, -t) \leq c\Mass(1, t)\mathds{1}_{U_\varepsilon}(1, t) \leq c\varepsilon^{-\alpha - 1},
\]
so $k_\varepsilon \in K_\alpha^0$. The pointwise convergence follows since $\bigcup_{\varepsilon \in (0, 1)} U_\varepsilon = \R^2\setminus \{(x, x): x\in \R\}$.

\end{proof}
\ifthesis
\begin{center}
\begin{figure}[H]
\begin{tikzpicture}[scale=3.0]
    \draw[->] (-1,0) -- (1,0) node[right] {$x$};
    \draw[->] (0,-1) -- (0,1) node[above] {$y$};
     
     \begin{scope}
        \clip (0,0) -- (1, 0.6) -- (1, 1) -- (0.6, 1)-- cycle;
        \fill[red!10] (-1.2,-1.2) rectangle (1.2,1.2);
    \end{scope}

    \begin{scope}
        \clip (0,0) -- (-0.6, -1) -- (-1, -1) -- (-1, -0.6) -- cycle;
        \fill[red!10] (-1.2,-1.2) rectangle (1.2,1.2);
    \end{scope}

    \begin{scope}
        \clip (0,0) -- (1, 0.8) -- (1, 1) -- (0.8, 1)-- cycle;
        \fill[red!30] (-1.2,-1.2) rectangle (1.2,1.2);
    \end{scope}

    \begin{scope}
        \clip (0,0) -- (-0.8, -1) -- (-1, -1) -- (-1, -0.8) -- cycle;
        \fill[red!30] (-1.2,-1.2) rectangle (1.2,1.2);
    \end{scope}

    \begin{scope}
        \clip (0,0) -- (1, 0.9) -- (1, 1) -- (0.9, 1)-- cycle;
        \fill[red!60] (-1.2,-1.2) rectangle (1.2,1.2);
    \end{scope}

    \begin{scope}
        \clip (0,0) -- (-0.9, -1) -- (-1, -1) -- (-1, -0.9) -- cycle;
        \fill[red!60] (-1.2,-1.2) rectangle (1.2,1.2);
    \end{scope}
    
    \draw[black,dotted] plot coordinates {
        (1,1) (-1,-1)
    };

    \node[draw, fill=white, anchor=south east] at (1.1,-0.9) {
        \begin{tabular}{l}
            \textcolor{red!10}{\rule{10pt}{6pt}} $\R^2 \setminus U_{0.4}$ \\
            \textcolor{red!30}{\rule{10pt}{6pt}} $\R^2 \setminus U_{0.2}$ \\
            \textcolor{red!60}{\rule{10pt}{6pt}} $\R^2 \setminus U_{0.1}$
        \end{tabular}
    };

\end{tikzpicture}
    \caption{Complements of $U_\varepsilon$.}
\end{figure}
\end{center}
\fi 

\begin{lemma}
\label{lemma: calkowalnosc h}
If $\alpha \in (0, 2)$, $p \in (1, \infty)$, $\rho \in (-1, \alpha)$, $r \in (0, \infty)$, $k \in K_\alpha^0$, and $u \in L^p(|x|^{-\alpha}dx)$, then
\begin{align*}
    \int_\R\int_\R\frac{|u(x)|^p}{h_{\rho, r}(x)}|h_{\rho, r}(x) - h_{\rho, r}(y)|k(x, y)\,dy\,dx < \infty.
\end{align*}
\end{lemma}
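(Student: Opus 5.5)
The plan is to apply Tonelli's theorem and reduce everything to a one-variable integral in $x$, using homogeneity. The integrand is nonnegative, so we may first integrate in $y$ for fixed $x\neq 0$ and set
\[
\Phi(x):=\frac{1}{h_{\rho,r}(x)}\int_\R |h_{\rho,r}(x)-h_{\rho,r}(y)|\,k(x,y)\,dy .
\]
The claim then follows from the bound $\Phi(x)\le c\,|x|^{-\alpha}$ for $x\neq0$, since $u\in L^p(|x|^{-\alpha}dx)$.

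To get this bound, take $x>0$ and substitute $y=xt$. Homogeneity of $k$ and of $h_{\rho,r}$ gives
\[
\Phi(x)=x^{-\alpha}\int_\R |1-h_{\rho,r}(t)|\,k(1,t)\,dt .
\]
Similarly, for $x<0$ the substitution $y=|x|t$ gives
\[
\Phi(x)=|x|^{-\alpha}\int_\R \Bigl|1-\tfrac{h_{\rho,r}(t)}{r}\Bigr|\,k(-1,t)\,dt .
\]
So it suffices to show that these two integrals over $t$ are finite. We split $\R$ into $t<0$, $t\in(0,1]$ and $t>1$.

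For $t<0$, we use $k\lesssim\nu_\alpha$. This bounds the integrand by a constant times $(1+|t|^\rho)(1+|t|)^{-\alpha-1}$, which is integrable. Near $0$ this uses $\rho>-1$, and at infinity it uses $\rho<\alpha$. The same argument handles $t>1$ for both $x>0$ and $x<0$. When $x>0$ and $t>1$ near $1$, the singularity is $|1-t^\rho|\,(t-1)^{-\alpha-1}$. For $\alpha\ge1$ this singularity is \emph{not} integrable, so here we use instead that $k\in K^0_\alpha$. By homogeneity and symmetry, $k(1,t)=t^{-\alpha-1}k(1/t,1)=t^{-\alpha-1}k(1,1/t)\le c\,t^{-\alpha-1}$ for $t\ge1$. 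This bounded kernel near the diagonal removes the singularity.

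For $t\in(0,1]$ and $x>0$, we have $k(1,t)\le c$ by the characterization of $K^0_\alpha$. The factor $|1-t^\rho|$ is integrable on $(0,1)$ because $\rho>-1$. For $x<0$ and $t>0$, the bound $k(-1,t)\lesssim (1+t)^{-\alpha-1}$ is enough. For $x<0$ and $t<0$, we repeat the positive-side argument with $k(-1,-t)$ in place of $k(1,t)$, noting $|1-h_{\rho,r}(t)/r|=|1-|t|^\rho|$ there.

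The main obstacle is the diagonal singularity $t\approx1$. This is exactly where $k\in K^0_\alpha$, rather than merely $k\in K_\alpha$, is needed, because the difference $h(x)-h(y)$ appears only to the first power and cannot compensate $|x-y|^{-\alpha-1}$ when $\alpha\ge1$. Once the bound $\Phi(x)\lesssim |x|^{-\alpha}$ is established, Tonelli's theorem yields
\[
\int_\R\int_\R \frac{|u(x)|^p}{h_{\rho,r}(x)}\,|h_{\rho,r}(x)-h_{\rho,r}(y)|\,k(x,y)\,dy\,dx\le c\int_\R |u|^p|x|^{-\alpha}\,dx<\infty .
\]
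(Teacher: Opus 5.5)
Your proof is correct and follows essentially the paper's route: Tonelli, then the substitution $y=tx$ with homogeneity, which reduces the inner integral to $|x|^{-\alpha}$ times a $t$-integral that is finite because $\rho\in(-1,\alpha)$. The only cosmetic difference is that the paper dominates $k$ in one step by $c\,\mu_\alpha(x,y)=c\,(|x|\vee|y|)^{-\alpha-1}$, whereas you split into regions, using $k\lesssim\nu_\alpha$ away from the diagonal and, near it, the bounds $k(1,t),k(-1,-t)\le c$ on $(0,1]$ and $k(1,t)\le c\,t^{-\alpha-1}$ on $[1,\infty)$.
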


\begin{proof}
Using the substitution $y = tx$, for $x > 0$, we obtain
\begin{align*}
&\quad \int_{\R}|h_{\rho, r}(x) - h_{\rho, r}(y)|k(x, y)dy
\\
&\leq c_1\int_{0}^{\infty}||x|^\rho - |y|^\rho|(|x| \lor |y|)^{-\alpha - 1}\,dy 
 + c_1\int_{-\infty}^{0}||x|^\rho - r|y|^\rho|(|x| \lor |y|)^{-\alpha - 1}\,dy
\\
&= c_1|x|^{\rho - \alpha} \left( \int_{0}^{\infty}|1 - |t|^\rho|(1 \lor |t|)^{-\alpha - 1}\,dt + \int_{-\infty}^{0}|1 - r|t|^\rho|(1 \lor |t|)^{-\alpha - 1}\,dt \right)
\\
&\leq c_2|x|^{\rho - \alpha}.
\end{align*}
An analogous estimate holds for $x < 0$. Hence,
\begin{align*}
    \int_\R\int_\R\frac{|u(x)|^p}{h_{\rho, r}(x)}|h_{\rho, r}(x) - h_{\rho, r}(y)|k(x, y)dydx
    &\leq c_3\int_\R\frac{|u(x)|^p}{|x|^\rho}|x|^{\rho - \alpha}dx 
    \\
    &= c_3\int_\R\frac{|u(x)|^p}{|x|^\alpha}dx < \infty.
\end{align*}
\end{proof}

\begin{lemma}
\label{lemma: rownosc Kw}
For $\alpha \in (0, 2)$, $\rho \in (-1, \alpha)$, $r \in (0, \infty)$, and $k \in K_\alpha^0$,
\begin{align}
\label{equation: rownosc Kw}
    \int_{\R}(h_{\rho, r}(x) - h_{\rho, r}(y))k(x, y)\,dy = |x|^{-\alpha}h_{\rho, r}(x)\begin{cases}
        C_{+}(k, r, \rho), \quad &x \geq 0, \\
        C_{-}(k, r, \rho), \quad &x < 0.
    \end{cases}
\end{align}
\end{lemma}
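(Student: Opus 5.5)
By homogeneity it suffices to compute the integral at $x=1$ and $x=-1$. For $x>0$, substitute $y=xt$. Since $h_{\rho,r}(xt)=x^\rho h_{\rho,r}(t)$ and $k(x,xt)=x^{-\alpha-1}k(1,t)$, we get $dy=x\,dt$ and the left-hand side equals $x^{\rho-\alpha}\int_\R (1-h_{\rho,r}(t))k(1,t)\,dt$. This is $|x|^{-\alpha}h_{\rho,r}(x)$ times the value at $x=1$. For $x<0$, put $y=|x|t$ and use $k(x,|x|t)=|x|^{-\alpha-1}k(-1,t)$ together with $h_{\rho,r}(x)=r|x|^\rho$; this reduces the claim to the analogous integral at $x=-1$ divided by $r$. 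Every integral is absolutely convergent, because $k\in K_\alpha^0$ gives $k(\pm1,\cdot)\lesssim \mu_\alpha(\pm1,\cdot)$, exactly as in the proof of Lemma~\ref{lemma: calkowalnosc h}. So we may split integrals freely, and no principal value is needed. The case $x=0$ is trivial or measure-zero and can be ignored.

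At $x=1$, split $\int_\R(1-h_{\rho,r}(t))k(1,t)\,dt$ into $t<0$, $0<t<1$ and $t>1$. The negative part gives $\int_0^\infty (1-r t^\rho)k(1,-t)\,dt$, which is exactly the last two terms of $C_+(k,r,\rho)$. For the positive parts we must recover $\gamma_+(k,\rho)$. On $t>1$, substitute $t=1/\tau$ and use homogeneity and symmetry: $k(1,1/\tau)=\tau^{\alpha+1}k(\tau,1)=\tau^{\alpha+1}k(1,\tau)$. This gives
\[
\int_1^\infty(1-t^\rho)k(1,t)\,dt=\int_0^1(1-\tau^{-\rho})\tau^{\alpha-1}k(1,\tau)\,d\tau=\int_0^1(\tau^{\alpha-1}-\tau^{\alpha-1-\rho})k(1,\tau)\,d\tau .
\]
Adding $\int_0^1(1-\tau^\rho)k(1,\tau)\,d\tau$, the integrand becomes $1-\tau^\rho-\tau^{\alpha-1-\rho}+\tau^{\alpha-1}=(1-\tau^\rho)(1-\tau^{\alpha-1-\rho})$, which yields $\gamma_+(k,\rho)$.

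At $x=-1$ the computation is the same. The integral is $\int_\R(r-h_{\rho,r}(t))k(-1,t)\,dt$, and after dividing by $r$ it becomes $\int_\R(1-h_{\rho,r}(t)/r)k(-1,t)\,dt$. The part $t>0$ gives $\int_0^\infty(1-t^\rho/r)k(-1,t)\,dt$. The part $t<0$, written with $t=-\tau$, gives $\int_0^\infty(1-\tau^\rho)k(-1,-\tau)\,d\tau$. We then fold $\tau>1$ onto $(0,1)$ as before, using $k(-1,-1/\tau)=\tau^{\alpha+1}k(-1,-\tau)$, and obtain $\gamma_-(k,\rho)$. Together these give $C_-(k,r,\rho)$.

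The main point to watch is the splitting of the $(0,\infty)$ integral near $t=1$. Individually the pieces $\int_0^1(1-t^\rho)k(1,t)\,dt$ converge only because $k\in K_\alpha^0$ is bounded near the diagonal; for general $k\in K_\alpha$ they may diverge, which is why the lemma assumes $K^0_\alpha$. The pieces also converge at $0$ and $\infty$, by $\rho>-1$ and $\rho<\alpha$ together with $k(1,t)\lesssim (1\vee t)^{-\alpha-1}$. So Fubini and the additive splitting are legitimate, and the rest is bookkeeping.
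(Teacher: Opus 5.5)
Your proposal is correct and follows essentially the paper's proof: you rescale with $y=tx$ using homogeneity, read off the cross-sign terms directly, and fold $(1,\infty)$ onto $(0,1)$ via $t\mapsto 1/t$ (with symmetry of $k$) to produce $\gamma_\pm(k,\rho)$. The only differences are cosmetic: the paper writes $h_{\rho,r}=h_++r\,h_-$ and uses linearity, whereas you treat $h_{\rho,r}$ directly at $x=\pm1$; you also spell out the absolute convergence coming from $k\in K_\alpha^0$, which the paper uses without comment.
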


\begin{proof}
Let  $h_+(x) := |x|^{\rho}\mathds{1}_{(0, \infty)}(x)$ and $h_-(x) := |x|^{\rho}\mathds{1}_{(-\infty, 0)}(x)$. Using the substitution $y = tx$ and the homogeneity of $k$, for $x > 0$, we compute
\begin{align*}
    \int_{\R}(h_+(x) - h_+(y))k(x, y)dy 
    &= \int_{0}^{\infty}(|x|^\rho - |y|^\rho)k(x, y)dy + \int_{-\infty}^{0}|x|^\rho k(x, y)dy \\
    &= |x|^{\rho-\alpha} \left( \int_{0}^{\infty}(1 - t^\rho)k(1, t)dt + \int_{0}^{\infty}k(1, -t)dt \right),
\end{align*}
and
\begin{align*}    
    \int_{\R}(h_-(x) - h_-(y))k(x, y)dy 
    &= -\int_{-\infty}^{0}|y|^\rho k(x, y)dy 
    = -|x|^{\rho - \alpha} \int_{0}^{\infty}t^\rho k(1, -t)dt.
\end{align*}
For $x < 0$, we get
\begin{align*}
    \int_{\R}(h_+(x) - h_+(y))k(x, y)\,dy 
    &= -\int_{0}^{\infty}|y|^\rho k(x, y)\,dy 
    = -|x|^{\rho - \alpha} \int_{0}^{\infty}t^\rho k(-1, t)\,dt,
\end{align*}
and
\begin{align*}
    \int_{\R}(h_-(x) - h_-(y))k(x, y)\,dy 
    &= \int_{-\infty}^{0}(|x|^\rho - |y|^\rho)k(x, y)\,dy + \int_{0}^{\infty}|x|^\rho k(x, y)\,dy \\
    &= |x|^{\rho-\alpha} \left( \int_{0}^{\infty}(1 - t^\rho)k(-1, -t)\,dt + \int_{0}^{\infty}k(-1, t)\,dt \right).
\end{align*}
Note that
\[
\int_{1}^{\infty}(1 - t^\rho)k(1, t)\,dt 
= \int_{0}^{1}t^{-2}(1 - t^{-\rho})k(1, t^{-1})\,dt 
= \int_{0}^{1}(t^{\alpha - 1} - t^{\alpha - 1- \rho})k(1, t)\,dt,
\]
so 
\begin{align*}
\int_{0}^{\infty}(1 - t^\rho)k(1, t)\,dt 
&= \int_{0}^{1}(1 - t^\rho + t^{\alpha - 1} - t^{\alpha - 1- \rho})k(1, t)dt \\
&= \int_{0}^{1}(1 - t^\rho)(1 - t^{\alpha - 1- \rho})k(1, t)dt.
\end{align*}
Similarly,
\[
\int_{0}^{\infty}(1 - t^\rho)k(-1, -t)dt = \int_{0}^{1}(1 - t^\rho)(1 - t^{\alpha - 1- \rho})k(-1, -t)dt.
\]

Since $h_{\rho, r} = h_+ + rh_-$, by linearity of the left-hand side of \eqref{equation: rownosc Kw} with respect to $h_{\rho, r}$, the result follows.
\end{proof}

Equality \cref{equation: rownosc Kw} is an important relation between $C_+(k, r, \rho)$, $C_-(k, r, \rho)$, and $h_{\rho, r}$. It may be helpful to note that the argument $\rho$ is the exponent in the power function $h_{\rho, r}$. 

We are now in a position to prove Theorem \cref{theorem: Hardy Identity}. The proof may be regarded as an adaptation of Doob conditioning to the  Sobolev--Bregman forms. 
Let us note that although Doob conditioning, or \( h \)-conditioning, simply relies on substituting \( u \) by \( v h \), where \( v := u/h \) and \( h > 0 \), it is a surprisingly powerful trick.
\subsection*{Proof of Theorem \cref{theorem: Hardy Identity}}
For real numbers $a_1, a_2 \in \R$ and $b_1, b_2 > 0$, we check that
\begin{align}
\label{equation: rownosc liczbowa}
    F_p(a_1, a_2) = F_p \left( \frac{a_1}{b_1}, \frac{a_2}{b_2}\right)b_{1}^{p-1}b_{2}+\left|a_{2}\right|^{p}\cdot\frac{b_{2}^{p-1}-b_{1}^{p-1}}{b_{2}^{p-1}}+\left(p-1\right)\left|a_{1}\right|^{p}\cdot\frac{b_{1}-b_{2}}{b_{1}}.
\end{align}
The identity generalizes Bogdan and Komorowski \cite[(5.20)]{MR3238505} and is inspired by \cite[(2.3) and the proof of Lemma 2.1]{MR4720167} and Bogdan, Dyda, and Luks \cite[Lemma 13]{MR3251822}. 
Applying the identity with $a_1 = u(x)$, $a_2 = u(y)$, $b_1 = h_{p, \beta, s}(x)$, and $b_2 = h_{p, \beta, s}(y)$, for $k \in K_\alpha^0$, we obtain
\begin{align}
\label{equation: rownosc w dowodzie tozsamosci}
    \E_{k, p}[u] = \int_{\R} \int_{\R} \left( g(x, y) + f_1(x, y) + f_2(y, x) \right) k(x, y)\,dx\,dy,
\end{align}
where
\begin{align*}
    g(x, y) &:= \frac{1}{p} F_p\left( \frac{u(x)}{h_{p, \beta, s}(x)}, \frac{u(y)}{h_{p, \beta, s}(y)} \right) h_{p, \beta, s}(x)^{p-1} h_{p, \beta, s}(y), 
\\
f_1(x, y) &:= \frac{1}{q} \frac{|u(x)|^p}{h_{p, \beta, s}(x)}(h_{p, \beta, s}(x) - h_{p, \beta, s}(y)),
\\
f_2(x, y) &:= \frac{1}{p} \frac{|u(x)|^p}{h_{p, \beta, s}(x)^{p-1}}(h_{p, \beta, s}(x)^{p-1} - h_{p, \beta, s}(y)^{p-1}) = \frac{1}{p} \frac{|u(x)|^p}{h_{q, \beta, s}(x)}(h_{q, \beta, s}(x) - h_{q, \beta, s}(y)).
\end{align*}
From Lemma \cref{lemma: calkowalnosc h}, we know that the functions $f_1k$ and $f_2k$ are integrable  with respect to the Lebesgue measure on $\R^2$. The function $gk$ is nonnegative. Therefore, the right-hand side of~\eqref{equation: rownosc w dowodzie tozsamosci} can be split into three separate integrals. Using the symmetry of $k$, Fubini's theorem, and Lemma~\ref{lemma: rownosc Kw}, we obtain:
\begin{align*}
    &\int_{\R} \int_{\R} f_1(x, y)k(x, y)\,dx\,dy + \int_{\R} \int_{\R} f_2(y, x)k(x, y)\,dx\,dy 
    \\
    = &\int_{\R} \int_{\R} f_1(x, y)k(x, y)\,dx\,dy + \int_{\R} \int_{\R} f_2(x, y)k(x, y)\,dx\,dy
    \\
    = &\left(\frac{1}{q}C_+\left(k, s^{\frac{1}{p}}, \frac{\beta}{p}\right) + \frac{1}{p}C_+\left(k, s^{\frac{1}{q}}, \frac{\beta}{q}\right)\right)\int_{0}^{\infty}\frac{|u(x)|^p}{|x|^{\alpha}}dx
    \\
    + &\left(\frac{1}{q}C_-\left(k, s^{\frac{1}{p}}, \frac{\beta}{p}\right) + \frac{1}{p}C_-\left(k, s^{\frac{1}{q}}, \frac{\beta}{q}\right)\right)\int_{-\infty}^{0}\frac{|u(x)|^p}{|x|^{\alpha}}dx
    \\
    = &\int_{\R} |u(x)|^p w_{k, p, s, \beta}(x)dx.
\end{align*}
Thus, the identity \cref{equation: ground state representation} is proved for every $k \in K_\alpha^0$. Next suppose $k \in K_\alpha$. By Lemma~\ref{lemma: aproksymacja k}, there is a family $\{k_\varepsilon\}_{\varepsilon > 0} \subset K_\alpha^0$ such that $k_\varepsilon \nearrow k$ pointwise as $\varepsilon \searrow 0$. Then, by Corollary~\ref{collary: zbieznosc stalych} and the monotone convergence theorem, the ground state representation \cref{equation: ground state representation} also holds for $k$.
The above explains the structure of constants in the definition of $w_{k,p,s,\beta}$, see \eqref{e.dD} and \eqref{e.dV}.
We also note the similarity to \cite[(13)]{MR4372148}.
\qed
\subsection{Criticality}

For $\alpha \in (0, 2)$, $p \in (1, \infty)$, $x,\ y > 0$, $n = 2,3,...$, and functions $u$, let
    \begin{align*}
    N_{\alpha, p}(x, y) &:= x^{\frac{\alpha-1
    }{q}} y^{\frac{\alpha-1}{p}} |x - y|^{-\alpha-1}
    \end{align*}
    and
    \begin{align*}
    \RR_{\alpha, p}(u) &:=  \int_{0}^{\infty}\int_{0}^{\infty}\left(u(x)-u(y)\right)^{2}N_{\alpha, p}(x, y)\,dx\,dy.
\end{align*}
We denote
\begin{align*}
    v_n(x) &:=  \begin{cases}
            nx-1 & \textrm{if $\frac{1}{n} \leq x < \frac{2}{n}$,}\\
            1 & \textrm{if $\frac{2}{n}\leq x < n$,}\\
            2 - x/n & \textrm{if $n \leq x < 2n$,}\\
            0 & \textrm{otherwise.}
            \end{cases}
\end{align*}

\begin{figure}[H]
    \centering
\begin{tikzpicture}
\begin{axis}[
    axis lines = middle,
    xlabel = {$x$},
    ylabel = {}, 
    title = {}, 
    xmin = 0, xmax = 7,
    ymin = -0.2, ymax = 1.2,
    xtick = {0.333, 0.666, 3, 6},
    xticklabels = {$\frac{1}{n}$, $\frac{2}{n}$, $n$, $2n$},
    ytick = {0,1},
    yticklabels = {0,1},
    grid = none,
    width=14cm,
    height=6cm,
    tick style = {thick},
    every axis plot post/.append style={ultra thick, black},
    clip mode=individual,
    axis line style = {thick}
]

\addplot[domain=0:0.333, black, thick] {0};

\addplot[domain=0.333:0.666, black, thick] {3*x - 1};

\addplot[domain=0.666:3, black, thick] {1};

\addplot[domain=3:6, black, thick] {2 - x/3};

\addplot[domain=6:7, black, thick] {0};

\draw[dotted, black!70] (0.666, 0) -- (0.666, 1); 
\draw[dotted, black!70] (3, 0) -- (3, 1);         

\end{axis}
\end{tikzpicture}
    \caption{Plot of $v_n(x)$}
    \label{fig:placeholder}
\end{figure}

\begin{lemma}
\label{lemma: ograniczonosc} For $\alpha \in (0, 2)$ and $p \in (1, \infty)$, the sequence $\RR_{\alpha, p}(v_n)$, $n = 2,3...$ is bounded.
\end{lemma}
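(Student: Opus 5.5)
The plan is to exploit the scaling invariance of $N_{\alpha,p}$ and reduce the problem to two fixed profiles, one near $0$ and one near $\infty$.

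\textbf{Scaling.} The exponents in $N_{\alpha,p}$ sum to
\[
\frac{\alpha-1}{q}+\frac{\alpha-1}{p}-\alpha-1=-2,
\]
so $N_{\alpha,p}(\lambda x,\lambda y)=\lambda^{-2}N_{\alpha,p}(x,y)$ for all $\lambda>0$. Substituting $x\mapsto\lambda x$, $y\mapsto\lambda y$ therefore gives $\RR_{\alpha,p}(u(\lambda\,\cdot))=\RR_{\alpha,p}(u)$ for every $u$.

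\textbf{Factorisation.} Define the two profiles
\[
\varphi(x):=\big((x-1)\vee 0\big)\wedge 1, \qquad \psi(x):=\big((2-x)\vee0\big)\wedge1, \qquad x>0 .
\]
The ramp of $\varphi(n\,\cdot)$ sits on $[1/n,2/n]$ and the ramp of $\psi(\cdot/n)$ sits on $[n,2n]$. For $n\ge2$ these do not overlap, since $2/n\le 1<n$. Checking the four pieces of the definition of $v_n$ then gives
\[
v_n(x)=\varphi(nx)\,\psi(x/n), \qquad x>0 .
\]

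\textbf{Splitting the difference.} For $f,g$ with values in $[0,1]$,
\[
|f(x)g(x)-f(y)g(y)|\le|f(x)-f(y)|+|g(x)-g(y)|,
\]
hence $(fg(x)-fg(y))^2\le 2(f(x)-f(y))^2+2(g(x)-g(y))^2$. Since $N_{\alpha,p}\ge0$, this together with the scaling invariance yields
\[
\RR_{\alpha,p}(v_n)\le 2\RR_{\alpha,p}(\varphi)+2\RR_{\alpha,p}(\psi)\quad\text{for all } n\ge 2 .
\]
The lemma is thus reduced to showing $\RR_{\alpha,p}(\varphi)<\infty$ and $\RR_{\alpha,p}(\psi)<\infty$.

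\textbf{Finiteness of $\RR_{\alpha,p}(\varphi)$.} This is the only real work. The integrand vanishes when both $x,y<1$ or both $x,y>2$. I would treat the remaining region in pieces, keeping track of both orderings since $N_{\alpha,p}$ is not symmetric.
\begin{itemize}
\item On $[1/2,3]^2$, the weights $x^{(\alpha-1)/q}$ and $y^{(\alpha-1)/p}$ are bounded. Since $\varphi$ is Lipschitz, the integrand is at most $c\,|x-y|^{1-\alpha}$, which is integrable because $\alpha<2$.
\item If $x<1/2$ and $y>1$ (or the reverse), then $|x-y|\gtrsim 1\vee y$ and $|\varphi(x)-\varphi(y)|\le1$. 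The integrand is bounded by $x^{(\alpha-1)/q}\,y^{(\alpha-1)/p}(1\vee y)^{-\alpha-1}$.
  \begin{itemize}
  \item Near $0$: the exponent satisfies $(\alpha-1)/q>-1/q>-1$, so it is integrable.
  \item At $\infty$: $(\alpha-1)/p-\alpha-1=-\alpha/q-1/p-1<-1$, so it is integrable.
  \item With the roles of $x$ and $y$ swapped, the exponent near $0$ is $(\alpha-1)/p>-1$ and the exponent at $\infty$ is $(\alpha-1)/q-\alpha-1<-1$, so the same conclusion holds.
  \end{itemize}
\item The cases $x\in[1,2]$ with $y>3$, and their mirror images, are handled by the same tail bound.
\end{itemize}

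\textbf{Finiteness of $\RR_{\alpha,p}(\psi)$.} The argument is identical: $\psi$ is Lipschitz and bounded, and the cross regions near $0$ and near $\infty$ obey the same exponent conditions. Alternatively, one can note that $\psi=1-\varphi(x-1+\ldots)$ has the same structure and repeat the estimates verbatim.

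The main obstacle is bookkeeping. One must make sure that every region where $\varphi(x)\neq\varphi(y)$ is covered, and that the integrability exponents $(\alpha-1)/p$ and $(\alpha-1)/q$ exceed $-1$ at $0$, while the total tail exponent is below $-1$ at $\infty$, for all $\alpha\in(0,2)$ and $p\in(1,\infty)$. As computed above, both conditions hold uniformly in this range.
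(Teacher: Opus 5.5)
Your proof is correct, but it takes a different route from the paper.

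\textbf{The paper's route.} The paper works directly with $v_n$. It splits $(0,\infty)^2$ into eight regions adapted to the two scales: the squares $[n/2,3n]^2$ and $(0,3/n]^2$, three cross regions, and their transposes. It then bounds each piece by an explicit power of $n$ and checks that the exponents cancel in every case. For example, it uses $n^{\alpha-3}\cdot n^{2-\alpha}\cdot n=1$ and $n^{\alpha-a}\cdot n^{-b-1}=1$, where $a+b=\alpha-1$.

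\textbf{Your route.} You make the reason for this cancellation explicit.
\begin{itemize}
\item Since $N_{\alpha,p}$ is homogeneous of degree $-2$, the functional $\RR_{\alpha,p}$ is exactly dilation invariant.
\item The factorisation $v_n=\varphi(n\,\cdot)\,\psi(\cdot/n)$ and the inequality $|fg(x)-fg(y)|\le|f(x)-f(y)|+|g(x)-g(y)|$ decouple the two scales.
\item Only a single $n$-independent integral is then left to estimate.
\end{itemize}

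\textbf{What each buys.} Your argument gives a clean, explicit uniform bound. It also isolates exactly which conditions matter: the exponents $(\alpha-1)/p$ and $(\alpha-1)/q$ must exceed $-1$ at $0$, and the tail exponents must be below $-1$ at $\infty$. The paper's version is more computational, but it avoids the product inequality and handles both scales in a single pass.

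\textbf{Two small corrections.}
\begin{itemize}
\item In your third bullet the range should be $x\in[1/2,2]$, not $[1,2]$. The region $x\in[1/2,1)$, $y>3$ also contributes, since there $\varphi(x)=0$ and $\varphi(y)=1$. The same tail bound $|x-y|\ge y/3$ handles it.
\item Your remark about $\psi$ can be made exact: $\psi=1-\varphi$. Hence $\RR_{\alpha,p}(\psi)=\RR_{\alpha,p}(\varphi)$, and in fact $\RR_{\alpha,p}(v_n)\le 4\,\RR_{\alpha,p}(\varphi)$ for all $n\ge2$.
\end{itemize}
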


\begin{proof} Observe that
    \begin{align*}
        \RR_{\alpha, p}(v_n) &= \left(
         \int_{\frac{n}{2}}^{3n}\int_{\frac{n}{2}}^{3n}
         +\int_{0}^{\frac{3}{n}}\int_{0}^{\frac{3}{n}}
         +\int_{0}^{2n}\int_{3n}^{\infty} + \int_{3n}^{\infty}\int_{0}^{2n} \right.
         \\
         &+ \left. \int_{0}^{\frac{2}{n}}\int_{\frac{3}{n}}^{n} + \int_{\frac{3}{n}}^{n}\int_{0}^{\frac{2}{n}}
         +\int_{0}^{\frac{n}{2}}\int_{n}^{3n} + \int_{n}^{3n}\int_{0}^{\frac{n}{2}}
         \right) 
         \\
        & \quad \ \left(v_n(x)-v_n(y)\right)^{2}N_{\alpha, p}(x, y)\,dx\,dy
        \\
        &=: I_1 + I_2 + I_3 + I_3' + I_4 + I_4' + I_5 + I_5'.
    \end{align*}
We will show the boundedness of each of these integrals. Let us define $a := \frac{\alpha-1}{q}$, $b := \frac{\alpha-1}{p}$. Note that $a + b = \alpha - 1$. We have
\begin{align*}
    I_1 &\lesssim n^{\alpha - 3}\int_{\frac{n}{2}}^{3n}\int_{\frac{n}{2}}^{3n}|x - y|^{1-\alpha}\,dx\,dy \lesssim n^{\alpha - 3}\int_{\frac{n}{2}}^{3n}\int_{0}^{3n}t^{1-\alpha}dtdy \lesssim n^{\alpha - 3} \cdot n^{2-\alpha} \cdot n = 1,
    \\
    I_2 &\lesssim n^2 \int_{0}^{\frac{3}{n}}\int_{0}^{\frac{3}{n}}|x - y|^{1-\alpha}x^{a}y^{b}\,dx\,dy 
    = 
    n^{\alpha-1-a-b}\int_0^3\int_0^3|x-y|^{1-\alpha}x^ay^b\,dx\,dy
    \lesssim 1,
    \\
    I_3 &\lesssim \int_{0}^{2n}\int_{3n}^{\infty}|x - y|^{-\alpha -1}x^a y^b \,dx\,dy \lesssim \int_{0}^{2n}\int_{3n}^{\infty}x^{a-\alpha-1}y^b\,dx\,dy \lesssim n^{a-\alpha} \cdot n^{b+1} = 1,
    \\
    I_4 &\lesssim \int_{0}^{\frac{2}{n}}\int_{\frac{3}{n}}^{n}|x - y|^{-1 - \alpha}x^a y^b \,dx\,dy = n^{\alpha - a}\int_{0}^{\frac{2}{n}}\int_{3}^{n^2}|t-yn|^{-\alpha -1}t^a y^b dtdy 
    \\
    &\lesssim n^{\alpha-a} \int_{0}^{\frac{2}{n}}y^bdy \lesssim n^{\alpha - a} \cdot n^{-b - 1} = 1,
\end{align*}
and
\begin{align*}
    I_5 &\lesssim \int_{0}^{\frac{n}{2}}\int_{n}^{3n}|x - y|^{-\alpha -1}x^a y^b \,dx\,dy \lesssim n^{-\alpha -1} \cdot n^{a} \cdot n \int_0^\frac{n}{2}y^bdy \lesssim n^{a-\alpha} \cdot n^{b+1} = 1.
\end{align*}
The boundedness of $I_3'$, $I_4'$, and $I_5'$ follows from that of $I_3$, $I_4$, and $I_5$, respectively, but with $p$ and $q$ interchanged.
\end{proof}

\begin{lemma}
\label{lemma: Hilbert}
Let
          \begin{align*}
            \mu(dx) &:= (1\wedge |x|^{-\alpha -1})\,dx,
            \\
            \|f\|_{\alpha, p} &:= \sqrt{\|f\|_{L^2((0,\infty),\mu)}^2 + \RR_{\alpha, p}(f) },
            \\
            X_{\alpha, p} &:= \{ f \in L^2((0,\infty),\mu) : \|f\|_{\alpha, p} <\infty\}.
          \end{align*}
          Then $\mathbb{H}_{\alpha, p} := (X_{\alpha, p}, \|\cdot\|_{\alpha, p})$ is a Hilbert space.
\end{lemma}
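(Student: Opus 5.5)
The plan is to exhibit $\mathbb{H}_{\alpha,p}$ as a closed subspace-like object of a product of two $L^2$ spaces, or, more directly, to check the inner product structure and then completeness by hand. First, $\|\cdot\|_{\alpha,p}$ comes from the inner product
\[
\langle f,g\rangle_{\alpha,p} := \int_0^\infty f g\,d\mu + \int_0^\infty\!\!\int_0^\infty (f(x)-f(y))(g(x)-g(y))N_{\alpha,p}(x,y)\,dx\,dy .
\]
This is well defined on $X_{\alpha,p}$ by Cauchy--Schwarz, since $N_{\alpha,p}\ge 0$. It is bilinear and symmetric. It is positive definite because $\langle f,f\rangle_{\alpha,p}\ge \|f\|^2_{L^2(\mu)}$ and $\mu$ has a positive density. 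Note that $N_{\alpha,p}$ itself need not be symmetric, but symmetry of $\langle\cdot,\cdot\rangle$ follows directly from the product form of the integrand. Also, $X_{\alpha,p}$ is a linear space because $\RR_{\alpha,p}(f+g)\le 2\RR_{\alpha,p}(f)+2\RR_{\alpha,p}(g)$. So it remains to prove completeness.

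For completeness I take a Cauchy sequence $(f_n)$ in $\|\cdot\|_{\alpha,p}$. It is Cauchy in $L^2((0,\infty),\mu)$, so $f_n\to f$ in $L^2(\mu)$, and after passing to a subsequence, $f_n\to f$ $\mu$-a.e., hence Lebesgue-a.e. on $(0,\infty)$. Set $F_n(x,y):=f_n(x)-f_n(y)$. Then $(F_n)$ is Cauchy in $L^2((0,\infty)^2, N_{\alpha,p}\,dx\,dy)$, so it converges there to some $G$, and along a further subsequence almost everywhere. Since $F_n(x,y)\to f(x)-f(y)$ a.e. on $(0,\infty)^2$ (a product of full-measure sets), we get $G(x,y)=f(x)-f(y)$ a.e. with respect to $N_{\alpha,p}\,dx\,dy$.

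Hence $\RR_{\alpha,p}(f)=\|G\|^2<\infty$, so $f\in X_{\alpha,p}$, and
\[
\|f_n-f\|_{\alpha,p}^2=\|f_n-f\|_{L^2(\mu)}^2+\|F_n-G\|^2\to 0
\]
along the subsequence. A Cauchy sequence with a convergent subsequence converges, so the whole sequence converges to $f$. Alternatively, one can use Fatou's lemma: $\RR_{\alpha,p}(f_n-f)\le\liminf_m \RR_{\alpha,p}(f_n-f_m)$, which is small for large $n$.

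The only delicate point is identifying the $L^2(N_{\alpha,p})$-limit $G$ with $f(x)-f(y)$, that is, matching the null sets of $\mu$ and of $N_{\alpha,p}\,dx\,dy$. This works because both measures are absolutely continuous with respect to Lebesgue measure, with a.e. positive densities on $(0,\infty)$ and $(0,\infty)^2$ respectively. The Fatou route avoids even this and is the one I would write out.
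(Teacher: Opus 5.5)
Your proposal is correct and follows essentially the same route as the paper: the same inner product, the $L^2((0,\infty),\mu)$ limit with an a.e.\ convergent subsequence, the $L^2$ limit $G$ of $F_n(x,y)=f_n(x)-f_n(y)$ with respect to $N_{\alpha,p}(x,y)\,dx\,dy$, and the identification $G(x,y)=f(x)-f(y)$ a.e. The paper makes this last identification via a Fatou-type $\liminf$ inequality rather than a further a.e.\ convergent subsequence, which amounts to the same thing; your extra checks (linearity of $X_{\alpha,p}$, and symmetry of the inner product even though $N_{\alpha,p}$ is not symmetric) are harmless additions.
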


\begin{proof} Observe that $\|\cdot\|_{\alpha, p}$ is induced by the inner product of the form
\begin{align*}
    (f, g) := (f, g)_{L^2((0, \infty), \mu)} + \int_{0}^{\infty} \int_{0}^{\infty}\left(f(x)-f(y)\right)\left(g(x)-g(y)\right)N_{\alpha, p}(x, y)\,dx\,dy.
\end{align*}
Therefore, it is enough to prove the completeness of $\mathbb{H}_{\alpha, p}$. To this end, let us consider a Cauchy sequence $(f_n)_{n \in \mathbb{N}}$ in $\mathbb{H}_{\alpha, p}$. Of course, the sequence is Cauchy in $L^2((0,\infty),\mu)$, too; hence it converges strongly to some $g \in L^2((0,\infty),\mu)$. Furthermore, it has a subsequence $(f_{n_j})_{j \in \mathbb{N}}$ converging to $g$ almost everywhere, because $\mu$ is $\sigma$-finite, in fact, finite. Observe that $\RR_{\alpha, p}(f_n) = \|F_n\|^2_{L^2((0, \infty)^2, \eta)}$, where $F_n(x, y) := f_n(x)-f_n(y)$ and $\eta(dx, dy) := N_{\alpha, p}(x, y) \,dx\,dy$, and $(F_n)_{n \in \mathbb{N}}$ is Cauchy in $L^2((0, \infty)^2, \eta)$; hence it converges strongly to some $G \in L^2((0, \infty)^2, \eta)$. By Fubini's theorem,
\begin{align*}
    0 = \liminf_{j \rightarrow \infty}\| F_{n_j}  - G\|_{L^2((0,\infty)^2, \eta)}^2 \geq \int_{(0,\infty)^2}\liminf_{j \rightarrow \infty}(F_{n_j} - G)^2d\eta \geq 0,
\end{align*}
which implies
\begin{align*}
    G(x, y) = \lim_{j \rightarrow \infty}F_{n_j}(x, y) = g(x) - g(y), \quad \text{a.e.}
\end{align*}
Therefore $(f_n)_{n \in \mathbb{N}}$ converges strongly to $g$ in $\mathbb{H}_{\alpha, p}$.
\end{proof}

\begin{lemma}
\label{lemma: critical weight of remainder}
    Suppose that for some function $\phi \geq 0$, 
          \[
           \RR_{\alpha, p}(v) \geq \int_0^\infty v^2(x) \phi(x)\,dx , \quad v\in C_c(0,\infty).
          \]
          Then $\phi=0$ a.e.
\end{lemma}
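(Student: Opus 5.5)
We test the inequality with the functions $v_n$ from above and pass to the limit. By Lemma~\ref{lemma: ograniczonosc}, $\RR_{\alpha,p}(v_n)\le M$ for all $n\ge 2$. Since $v_n\to 1$ pointwise on $(0,\infty)$ and $0\le v_n\le 1$, Fatou's lemma would give
\[
\int_0^\infty \phi(x)\,dx\le \liminf_{n\to\infty}\int_0^\infty v_n^2\phi\,dx\le \liminf_{n\to\infty}\RR_{\alpha,p}(v_n)\le M .
\]
So $\phi$ is integrable, but boundedness alone does not force $\phi=0$. We need that $\RR_{\alpha,p}(v_n)$ becomes small, or a perturbation argument that exploits the constant function $1$ having zero energy.

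The plan is to use the Hilbert space $\mathbb H_{\alpha,p}$ of Lemma~\ref{lemma: Hilbert}. The sequence $(v_n)$ is bounded in $\mathbb H_{\alpha,p}$: $\RR_{\alpha,p}(v_n)$ is bounded, $\mu$ is finite and $|v_n|\le1$. By weak compactness, a subsequence converges weakly to some $g\in\mathbb H_{\alpha,p}$. Since $v_n\to 1$ in $L^2(\mu)$ by dominated convergence, and the inclusion $\mathbb H_{\alpha,p}\hookrightarrow L^2(\mu)$ is continuous (hence weakly continuous), we get $g=1$. The inequality extends from $C_c(0,\infty)$ to the $v_n$: they are themselves continuous with compact support in $(0,\infty)$. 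Next, form Cesàro (Banach--Saks/Mazur) convex combinations $w_m$ of the $v_n$ converging \emph{strongly} to $1$ in $\mathbb H_{\alpha,p}$. Since $\RR_{\alpha,p}(1)=0$, we get $\RR_{\alpha,p}(w_m)=\RR_{\alpha,p}(w_m-1)\le\|w_m-1\|_{\alpha,p}^2\to0$. The $w_m$ are still in $C_c(0,\infty)$ and converge pointwise to $1$, since each is a convex combination of $v_n$ with $n\ge m$, and $v_n=1$ on $[2/n,n]$. Fatou's lemma then gives $\int_0^\infty\phi\le\liminf_m\RR_{\alpha,p}(w_m)=0$, so $\phi=0$ a.e.

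The main obstacle is making the weak-limit step rigorous: checking that $1\in X_{\alpha,p}$ with $\RR_{\alpha,p}(1)=0$, and that the weak limit is identified as $1$. The identification should follow because, for fixed $h\in L^2(\mu)$, the functional $f\mapsto (f,h)_{L^2(\mu)}$ is continuous on $\mathbb H_{\alpha,p}$. Then Mazur's lemma (a weakly convergent sequence has convex combinations of its tails converging strongly) supplies the $w_m$. If one prefers to avoid Mazur, an alternative is to build $w_m$ directly as logarithmic averages $\frac1m\sum_{j=1}^m v_{2^j}$-type cutoffs and estimate $\RR_{\alpha,p}$ by the same splitting as in Lemma~\ref{lemma: ograniczonosc}. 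That route is the usual "logarithmic cutoff" trick and is more computational; the functional-analytic route is the one I would carry out.
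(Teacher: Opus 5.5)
Your proposal is correct and follows the paper's proof. In both, $(v_n)$ is bounded in $\mathbb H_{\alpha,p}$, the weak limit is identified as $1$ through $L^2(\mu)$, Mazur convex combinations converge strongly to $1$, and the energy vanishes because $\RR_{\alpha,p}(f)=\RR_{\alpha,p}(f-1)$. The only difference is the last step: the paper reduces by scaling to the interval $(1,2)$, where every $v_n\equiv 1$. Your tail convex combinations equal $1$ on $[2/m,m]$, so you get the conclusion on all of $(0,\infty)$ directly, without the scaling step.
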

\begin{proof}
          By scaling, it is enough to show that $\phi=0$ on the interval $(1,2)$.
          Let us consider the functions $v_n\in C_c(0,\infty)$ from Lemma \cref{lemma: ograniczonosc} and the Hilbert space $\mathbb{H}_{\alpha, p}$ from Lemma \cref{lemma: Hilbert}.
          By Lemma \ref{lemma: ograniczonosc}, the sequence $(v_n)_{n \in \mathbb{N}}$ is bounded in $\mathbb{H}_{\alpha, p}$. Since $\mathbb{H}_{\alpha, p}$ 
          is necessarily a reflexive space,
          by Banach--Alaoglu and Eberlein--\v{S}mulian theorems,
          there exists a subsequence $(v_{n_{k}})_{k\in\mathbb{N}}$ weakly convergent to some $v$.
          Since $(v_{n_{k}})_{k\in\mathbb{N}}$ converges to $1$ in $L^2((0,\infty), \mu)$,
          it follows that $v = 1$.
          By Mazur's lemma, there are functions $f_k$ such that each $f_k$ is a convex combination
          of $v_{n_1}, \ldots, v_{n_k}$ and  $\| f_k - v\|_{\alpha, p} \xrightarrow[]{k \rightarrow\infty} 0$. Since each $v_n=1$ on $(1,2)$,
          each $f_k=1$ on $(1,2)$, and so
          \[
          \int_1^2 \phi(x)\,dx =
          \int_1^2 f_k^2(x)\phi(x)\,dx \leq
          \RR_{\alpha, p}(f_k) = \RR_{\alpha, p}(f_k - v) \xrightarrow[]{k\rightarrow\infty} 0.
          \]
          Therefore $\phi=0$ a.e. on $(1,2)$.
\end{proof}

\begin{lemma}
\label{lemma: comparability}
For every $p \in (1, \infty)$, there is a constant $c_p$ such that
\begin{align*}
    F_p\left(a^{\langle \frac{2}{p} \rangle}, b^{\langle \frac{2}{p} \rangle}\right) \leq c_p(a - b)^2, \quad a, b \in \R.
\end{align*}
\end{lemma}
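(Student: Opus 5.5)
The plan is to use homogeneity to reduce the inequality to a one-variable estimate, and then check that estimate by a local Taylor argument near the diagonal together with a growth argument at infinity. Write $A := a^{\langle 2/p\rangle}$ and $B := b^{\langle 2/p\rangle}$. Then $|A|^p = a^2$, $|B|^p = b^2$, and $A^{\langle p-1\rangle} = a^{\langle 2(p-1)/p\rangle} = a^{\langle 2/q\rangle}$. So the left-hand side becomes
\begin{align*}
\Phi(a,b) := F_p\bigl(a^{\langle 2/p\rangle}, b^{\langle 2/p\rangle}\bigr) = b^2 - a^2 - p\, a^{\langle 2/q\rangle}\bigl(b^{\langle 2/p\rangle} - a^{\langle 2/p\rangle}\bigr).
\end{align*}
$\Phi$ is continuous on $\R^2$, since all the signed powers have positive exponents. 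It is positively homogeneous of degree $2$: $\Phi(\lambda a,\lambda b) = \lambda^2 \Phi(a,b)$ for $\lambda>0$, because $2/q + 2/p = 2$. It is also invariant under $(a,b)\mapsto(-a,-b)$, since every signed power is odd. The right-hand side $(a-b)^2$ has the same two properties.

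First I will treat $a=0$. There $\Phi(0,b) = b^2 = (b-a)^2$, so any $c_p\ge 1$ works. If $a\neq 0$, I will use the symmetry to assume $a>0$ and then the homogeneity with $\lambda = 1/a$ to assume $a=1$. It then suffices to show that
\begin{align*}
f(b) := \Phi(1,b) = b^2 - 1 - p\bigl(b^{\langle 2/p\rangle} - 1\bigr) \le c_p (b-1)^2, \quad b\in\R .
\end{align*}
Equivalently, the ratio $f(b)/(b-1)^2$ must be bounded above on $\R\setminus\{1\}$; recall that $f\ge 0$ because $F_p\ge0$.

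The boundedness splits into three regimes.
\begin{itemize}
\item Near $b=1$: $f$ is $C^\infty$ on $(1/2,3/2)$, with $f(1)=0$ and $f'(1) = 2 - p\cdot\tfrac{2}{p} = 0$. Taylor's theorem then gives $f(b)\le \tfrac12 \sup_{(1/2,3/2)}|f''|\,(b-1)^2$ there.
\item For $|b|\to\infty$: $f(b) = b^2 + O(|b|^{2/p}) + O(1)$ with $2/p<2$, so $f(b)/(b-1)^2\to 1$. The ratio is therefore bounded for $|b|\ge R$ with $R$ large.
\item On the remaining compact set $\{|b|\le R,\ |b-1|\ge 1/2\}$: $f$ is continuous (including at $b=0$, where $b^{\langle 2/p\rangle}$ is merely continuous) and $(b-1)^2\ge 1/4$, so the ratio is bounded.
\end{itemize}
Taking $c_p$ to be the maximum of these three bounds and $1$ finishes the argument.

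The only point needing care is the local behaviour at $b=1$. The reduction to $a=1$ is exactly what makes it harmless, because the singularity of $b^{\langle 2/p\rangle}$ at $0$ is then kept away from the diagonal point $b=1$. Without the normalization, one would have to control $\Phi$ near the corner $a=b=0$ directly, and homogeneity is what removes that difficulty.
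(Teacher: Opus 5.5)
Your proof is correct and complete. The oddness and degree-$2$ homogeneity of $\Phi(a,b)=F_p(a^{\langle 2/p\rangle},b^{\langle 2/p\rangle})$ (using $2/p+2/q=2$) reduce everything to bounding $f(b)/(b-1)^2$ on $\R\setminus\{1\}$. Your three regimes cover all of $\R$: Taylor at $b=1$ with $f(1)=f'(1)=0$, the asymptotics $f(b)\sim b^2$ since $2/p<2$, and continuity on the leftover compact set where $(b-1)^2\ge 1/4$.

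The paper gives no argument of its own and simply cites \cite[Lemma 2.3]{MR4589708}, so your proof is a self-contained substitute rather than a variant of something in the text. It delivers exactly the one-sided bound used in Lemma~\ref{lemma: pfrom R comparison}, with a non-explicit $c_p$. If you also wanted the reverse inequality (which holds), an alternative is available. Combine the two-sided estimate $F_p(A,B)\approx (B-A)^2(|A|\vee|B|)^{p-2}$ with $|B-A|\approx |b-a|(|a|\vee|b|)^{2/p-1}$, where $A=a^{\langle 2/p\rangle}$ and $B=b^{\langle 2/p\rangle}$. Since $|A|\vee|B|=(|a|\vee|b|)^{2/p}$, the powers of $|a|\vee|b|$ cancel, giving $F_p(a^{\langle 2/p\rangle},b^{\langle 2/p\rangle})\approx (b-a)^2$. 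That route needs two auxiliary comparisons, themselves typically proved by homogeneity plus compactness. Yours goes straight to the needed inequality.
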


Lemma \cref{lemma: comparability} is proven, e.g., in \cite[Lemma 2.3.]{MR4589708}

\begin{lemma}
\label{lemma: pfrom R comparison}
For $\alpha \in (0, 2)$, conjugate exponents $p$ and $q$, $s \in (0, \infty)$, and $k \in K_\alpha$, there is a constant $c_{k, p, s}$ such that for every symmetric function $v$,
\begin{align*}
    \int_{\R}\int_{\R}F_p \left(v(x)^{\langle \frac{2}{p} \rangle}, v(y)^{\langle \frac{2}{p} \rangle}\right) h_{q, \alpha-1, s}(x)h_{p, \alpha-1, s}(y) k(x, y)\,dx\,dy \leq c_{k, p, s}\RR_{\alpha, p}(v).
\end{align*}
\end{lemma}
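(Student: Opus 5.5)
The plan is to reduce everything to the half-line kernel $N_{\alpha,p}$ through pointwise bounds, and then fold the three other quadrants of $\R^2$ onto $(0,\infty)^2$ using the symmetry $v(-x)=v(x)$.

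\textbf{Pointwise bounds.} First I apply Lemma~\ref{lemma: comparability} with $a=v(x)$ and $b=v(y)$. This gives
\[
F_p\bigl(v(x)^{\langle 2/p\rangle},v(y)^{\langle 2/p\rangle}\bigr)\le c_p\,(v(x)-v(y))^2 .
\]
Next I bound the weights. By definition $h_{q,\alpha-1,s}(x)=|x|^{(\alpha-1)/q}$ times $1$ or $s^{1/q}$, and $h_{p,\alpha-1,s}(y)=|y|^{(\alpha-1)/p}$ times $1$ or $s^{1/p}$. Hence
\[
h_{q,\alpha-1,s}(x)\,h_{p,\alpha-1,s}(y)\le (1\vee s)\,|x|^{(\alpha-1)/q}|y|^{(\alpha-1)/p}.
\]
Finally $k\in K_\alpha$ gives $k(x,y)\le c_k|x-y|^{-\alpha-1}$. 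Together, the integrand is dominated by
\[
c\,(v(x)-v(y))^2\,|x|^{(\alpha-1)/q}|y|^{(\alpha-1)/p}|x-y|^{-\alpha-1}, \qquad c:=c_p c_k(1\vee s).
\]
All integrands are nonnegative, so these pointwise bounds can be integrated directly, and the claim holds trivially whenever the right-hand side is infinite.

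\textbf{Splitting into quadrants.} I split $\R^2$ (up to null sets) into the four open quadrants.
\begin{itemize}
\item On $(0,\infty)^2$ the dominating function is exactly $c\,(v(x)-v(y))^2N_{\alpha,p}(x,y)$, so this piece contributes at most $c\,\RR_{\alpha,p}(v)$.
\item On $(-\infty,0)^2$ I substitute $x\mapsto -x$, $y\mapsto -y$. The symmetry of $v$ leaves $(v(x)-v(y))^2$ unchanged, and $|x|$, $|y|$, $|x-y|$ are unchanged, so again the bound is $c\,\RR_{\alpha,p}(v)$.
\item On $x>0$, $y<0$ I substitute $y=-y'$ with $y'>0$. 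Then $v(y)=v(y')$, and $|x-y|=x+y'\ge|x-y'|$. Since $-\alpha-1<0$, this gives $(x+y')^{-\alpha-1}\le|x-y'|^{-\alpha-1}$, so this quadrant contributes at most $c\,\RR_{\alpha,p}(v)$ as well.
\item The quadrant $x<0$, $y>0$ is handled the same way, reflecting $x$ instead of $y$.
\end{itemize}
Summing the four pieces gives the claim with $c_{k,p,s}=4c_pc_k(1\vee s)$.

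\textbf{Main obstacle.} There is essentially no hard step. The only point needing care is the mixed quadrants, where one must use the elementary inequality $x+y'\ge|x-y'|$ for $x,y'>0$ together with the negativity of the exponent $-\alpha-1$. This is exactly where the symmetry of $v$ is needed: it is what allows the reflected variable to be compared with $v(y')$ inside $\RR_{\alpha,p}$.
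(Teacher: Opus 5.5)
Your proof is correct and follows essentially the same route as the paper: the bound $F_p\bigl(a^{\langle 2/p\rangle},b^{\langle 2/p\rangle}\bigr)\le c_p(a-b)^2$, domination of the weights and of $k$ by a multiple of $\nu_\alpha$, and folding the four quadrants onto $(0,\infty)^2$ using the evenness of $v$ and $x+y\ge|x-y|$ for $x,y>0$. The only cosmetic difference is that the paper first symmetrizes the weight into $l(x,y)$ before folding, which produces its factor $8$, whereas you keep the asymmetric weight and reflect only the negative variable; this works equally well.
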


\begin{proof}Let $\beta = \alpha - 1$. Since $k \lesssim \Mass$, by Lemma \cref{lemma: comparability}, we have
\begin{align*}
    &\int_{\R}\int_{\R}F_p \left(v(x)^{\langle \frac{2}{p} \rangle}, v(y)^{\langle \frac{2}{p} \rangle}\right) h_{q, \beta, s}(x)h_{p, \beta, s}(y) k(x, y)\,dx\,dy
    \\
    \leq &c'\int_{\R}\int_{\R}(v(x) - v(y))^2 h_{q, \beta, s}(x)h_{p, \beta, s}(y) \Mass(x, y)\,dx\,dy
    \\
    = &\frac{c'}{2}\int_{\R}\int_{\R}(v(x) - v(y))^2 (h_{q, \beta, s}(x)h_{p, \beta, s}(y) + h_{q, \beta, s}(y)h_{p, \beta, s}(x)) \Mass(x, y)\,dx\,dy =: I_1.
\end{align*}
Observe that $h_{p, \beta, s} \lesssim h_{p, \beta, 1}$ and $h_{q, \beta, s} \lesssim h_{q, \beta, 1}$. Let 
\begin{align*}
    l(x, y) := (h_{q, \beta, 1}(x)h_{p, \beta, 1}(y) + h_{q, \beta, 1}(y)h_{p, \beta, 1}(x)) \Mass(x, y).
\end{align*}
We get
\begin{align*}
        I_1 \leq & c''\int_{\R}\int_{\R}(v(x) - v(y))^2l(x, y)\,dx\,dy
        \\
        = & c''\left(\int_{0}^{\infty}\int_{0}^{\infty} + 2\int_{0}^{\infty}\int_{-\infty}^{0}+ \int_{-\infty}^{0}\int_{-\infty}^{0}  \right)(v(x) - v(y))^2 l(x, y)\,dx\,dy =: I_2+2I_3+I_4.
\end{align*}
Observe that $l(x, y) = l(-x, -y)$ for $x, y > 0$ hence $I_4=I_2$. Moreover $l(-x, y) \leq l(x, y)$ for $x, y > 0$ thus, using the substitution $x = -t$, we see that $I_3\le I_2$.
Therefore,
\begin{align*}
    I_1 \leq 4 c''\int_{0}^{\infty}\int_{0}^{\infty}(v(x) - v(y))^2 l(x, y)\,dx\,dy = 8c'' \RR_{\alpha, p}(v).
\end{align*}
\end{proof}

We are now in a position to prove Theorem \cref{theorem: Hardy Inequality}.
\subsection*{Proof of Theorem \cref{theorem: Hardy Inequality}}
Suppose that $W \geq W_{k, p, s}$ a.e. and $W$ is a $(k, p)$-Hardy weight, i.e.,
\begin{align}
\label{equation: supouse inequality}
    \E_{k, p}[u] \geq \int_{\R}|u(x)|^pW(x)dx, \quad u \in C_c(\Ro).
\end{align}
For $v \in C_c(0, \infty)$, let us define \begin{align}\label{e.da}
    u(x) := h_{p, {\alpha - 1}, s}(x)v(|x|)^{\langle\frac{2}{p}\rangle}. 
\end{align}
By Theorem \cref{theorem: Hardy Identity} the left hand side of \cref{equation: supouse inequality} equals to
\begin{align*}
    &\frac{1}{p}\int_{\R}\int_{\R}F_p \left( \frac{u(x)}{h_{p, \alpha - 1, s}(x)}, \frac{u(y)}{h_{p, \alpha - 1, s}(y)}\right) h_{q, \alpha - 1, s}(x)h_{p, \alpha - 1, s}(y) k(x, y)\,dx\,dy\\ + &\int_{\R}|u(x)|^pW_{k, p, s}(x)dx
    \\
    = &\frac{1}{p}\int_{\R}\int_{\R}F_p \left( v(|x|)^{\langle\frac{2}{p}\rangle}, v(|y|)^{\langle\frac{2}{p}\rangle}\right) h_{q, \alpha - 1, s}(x)h_{p, \alpha - 1, s}(y) k(x, y)\,dx\,dy\\ + &\int_{\R}v(|x|)^2 h_{\alpha-1, s}(x) W_{k, p, s}(x)dx. 
\end{align*}
Thus, by Lemma~\cref{lemma: pfrom R comparison}, for some constant \( c > 0 \), we have that
\[
    \RR_{\alpha, p}(v) \geq \int_{\R} v(|x|)^2 \cdot c\cdot h_{\alpha - 1, s}(x)\, (W(x) - W_{k, p, s}(x))\, dx, \quad v \in C_c(0, \infty).
\]
Therefore, by Lemma~\cref{lemma: critical weight of remainder}, it follows that \( c\, h_{\alpha-1, s}(W - W_{k, p, s}) = 0 \) a.e., which implies that \( W = W_{k, p, s} \) a.e.
\qed
\subsection{Balancing and improving critical Hardy inequalities}
\subsection*{Proof of Theorem \texorpdfstring{\cref{theorem: sign}}{Theorem sign}}
\begin{proof}
Let us recall that
\begin{align*}
    W_{k, p, s}(x) := |x|^{-\alpha}\begin{cases}
        D_+(k, p, s, \alpha - 1), &\quad x > 0,
        \\
        D_-(k, p, s, \alpha - 1), &\quad x < 0.
    \end{cases}
\end{align*}

Then (a) follows from the definition \cref{definition: D} since $D_+(k, p, 1, 0) = D_{-}(k, p, 1, 0) = 0$.

Next we prove (b). Observe that $D_+(k, p, s, \alpha-1) = \gamma_+(k, \frac{\alpha-1}{p}), D_{-}(k, p, s, \alpha-1) = \gamma_-(k, \frac{\alpha-1}{p})$. Since $\alpha \neq 1$ and we excluded the case $k = 0$ a.e., at least one of the values $\gamma_+(k, \frac{\alpha-1}{p}), \gamma_-(k, \frac{\alpha-1}{p})$ has to be positive---see the Definition~\ref{definition: gamma C}.

Finally, we prove (c) as follows. Observe that $D_+(k, p, s, \alpha - 1)$  is strictly decreasing and continuous in $s$ with the limit $-\infty$ at $\infty$. Similarly $D_-(k, p, s, \alpha - 1)$ as the function of $s$ is strictly increasing and continuous with the limit $-\infty$ at $0$. Thus there exists exactly one $s_0 \in (0, \infty)$, such that $D_+(k, p, s_0, \alpha - 1) = D_-(k, p, s_0, \alpha - 1) =: D$. Since $W_{k, p, s_0}(x) = D|x|^{-\alpha}$ is a~critical $(k, p)$-Hardy weight, it follows that $D \geq 0$. Suppose that $D = 0$. By the definition \cref{definition: D} of $D_+$, we have
\begin{align}
\begin{split}
\label{equation: rownosc s0}
    &\frac{1}{q}s_0^{\frac{1}{p}}\int_{0}^{\infty}t^{\frac{\alpha-1}{p}}k\left(1,-t\right)dt+\frac{1}{p}s_0^{\frac{1}{q}}\int_{0}^{\infty}t^{\frac{\alpha-1}{q}}k\left(1,-t\right)dt 
    \\
    =&\gamma_+\left(k, \frac{\alpha-1}{p}\right)+\int_{0}^{\infty}k\left(1,-t\right)dt.
\end{split}
\end{align}
By substitution $t = \frac{1}{u}$,
\begin{align*}
    \int_{0}^{\infty}k\left(-1,t\right)dt = \int_{0}^{\infty}t^{\alpha-1}k\left(1,-t\right)dt
\end{align*}
and
\begin{align}\label{eq:k(-1,t)}
    \int_{0}^{\infty}t^{\frac{\alpha-1}{r_1}}k\left(-1,t\right)dt = \int_{0}^{\infty}t^{\frac{\alpha-1}{r_2}}k\left(1,-t\right)dt
\end{align}
 whenever $r_1$ and $r_2$ are conjugate exponents. By the equality \cref{equation: rownosc s0}, we get
\begin{align*}
    0 = &s_0D_{-}(k, p, s_0, \alpha-1)
    \\
    = &s_0\gamma_-\left(k, \frac{\alpha-1}{p}\right) + s_0\int_0^{\infty}k(-1, t)dt 
    \\
    - &\frac{1}{p}s_0^{\frac{1}{p}}\int_{0}^{\infty}t^{\frac{\alpha-1}{q}}k\left(-1,t\right)dt - \frac{1}{q}s_0^{\frac{1}{q}}\int_{0}^{\infty}t^{\frac{\alpha-1}{p}}k\left(-1,t\right)dt
    \\
    = &s_0\gamma_-\left(k, \frac{\alpha-1}{p}\right) + s_0\int_0^{\infty}t^{\alpha-1}k(1, -t)dt 
    \\
    + &\frac{1}{q}s_0^{\frac{1}{p}}\int_{0}^{\infty}t^{\frac{\alpha-1}{p}}k\left(1,-t\right)dt + \frac{1}{p}s_0^{\frac{1}{q}}\int_{0}^{\infty}t^{\frac{\alpha-1}{q}}k\left(1,-t\right)dt
    \\
    - &s_0^{\frac{1}{p}}\int_{0}^{\infty}t^{\frac{\alpha-1}{p}}k\left(1,-t\right)dt - s_0^{\frac{1}{q}}\int_{0}^{\infty}t^{\frac{\alpha-1}{q}}k\left(1,-t\right)dt
    \\
    = &s_0\gamma_-\left(k, \frac{\alpha-1}{p}\right) + \gamma_+\left(k, \frac{\alpha-1}{p}\right) 
    \\
    + &\int_{0}^{\infty}k\left(1,-t\right)dt+s_0\int_0^{\infty}t^{\alpha-1}k(1, -t)dt 
    \\
    - &s_0^{\frac{1}{p}}\int_{0}^{\infty}t^{\frac{\alpha-1}{p}}k\left(1,-t\right)dt - s_0^{\frac{1}{q}}\int_{0}^{\infty}t^{\frac{\alpha-1}{q}}k\left(1,-t\right)dt
    \\
    = &s_0\gamma_-\left(k, \frac{\alpha-1}{p}\right) + \gamma_+\left(k, \frac{\alpha-1}{p}\right) + \int_{0}^{\infty}\left(1 - (s_0 t^{\alpha-1})^{\frac{1}{p}}\right)\left(1 - (s_0 t^{\alpha-1})^{\frac{1}{q}}\right)k(1, -t)dt.
\end{align*}
All the terms in the last equality are nonnegative, which in particular implies that the last one must be equal to $0$. This happens only when $\alpha = s_0 = 1$, which, however, contradicts  the assumption that $\alpha \neq 1$. Thus, $D > 0$ and so $W_{k, p, s_0} > 0$ a.e. Moreover $D_+$ and $D_-$ are continuous as functions of $s$, hence there is an  open neighborhood $S$ of $s_0$ such that $W_{k,p,s} > 0$ a.e. for all $s \in S$.
\end{proof}

\subsection*{Proof of Proposition \texorpdfstring{\cref{proposition: extra term}}{proposition: extra term}}

\begin{proof} Let $u \in L^p(|x|^{-\alpha}dx)$
  and
\begin{align*}
    S := \sup_{s > 0}f(s),
\end{align*}
where
\begin{align*}
    f(s) := \int_{\R} |u(x)|^p W_{k,p,s}(x)\,dx.
\end{align*}
First suppose that $I_+ = 0$. Then $\mathcal{G}_{k, p}(u) = \left(\frac{l_q}{q} + \frac{l_p}{p}\right)I_-$ and
\begin{align*}
    S &= \lim_{s \rightarrow \infty}D_-(k, p, s, \alpha - 1)I_- = \left(\gamma_{-}\left(k, \frac{\alpha - 1}{p}\right) + \int_{0}^{\infty}k(-1, t)dt\right)I_-
    \\
    &= \left(\gamma_{-}\left(k, \frac{\alpha - 1}{p}\right) + \int_{0}^{\infty}k(-1, t)dt - \left(\frac{l_q}{q} + \frac{l_p}{p}\right) + \left(\frac{l_q}{q} + \frac{l_p}{p}\right)\right)I_- 
    \\
    &= D_-(k, p, 1, \alpha - 1)I_- + \mathcal{G}_{k, p}(u) = f(1) + \mathcal{G}_{k, p}(u),
\end{align*}
where we used \eqref{eq:k(-1,t)} in the last equality.
Similarly we resolve the case $I_- = 0$. Now suppose that $I_+, I_- > 0$. We get,
again using \eqref{eq:k(-1,t)},
\begin{align*}
    f'(s) = \frac{1}{pq}\left(s^{-\frac{1}{q} - 1}l_p + s^{-\frac{1}{p} - 1}l_q
    \right)I_- - \frac{1}{pq}\left(
    s^{\frac{1}{q} - 1}l_q + s^{\frac{1}{p} - 1}l_p\right)I_+,\quad s>0.
\end{align*}
Let
\begin{align*}
    g(s) := spq \cdot f'(s) =   \left(s^{-\frac{1}{q}}l_p + s^{-\frac{1}{p}}l_q
    \right)I_- - \left(
    s^{\frac{1}{q}}l_q + s^{\frac{1}{p}}l_p\right)I_+, \quad s>0.
\end{align*}
Of course, $spq > 0$ implies that $\sign(g(s)) = \sign(f'(s))$ for all $s > 0$. Moreover $g$ is strictly decreasing and $g\left(\frac{I_-}{I_+}\right)=0$. This implies that $S = f\left( \frac{I_-}{I_+}\right)$. We calculate
\begin{align*}
    f\left( \frac{I_-}{I_+}\right) &= \left( \gamma_{+}\left(k, \frac{\alpha - 1}{p}\right) + \int_{0}^{\infty}k(1, -t)dt \right)I_+ - \frac{1}{p}I_{-}^{\frac{1}{q}}I_{+}^{\frac{1}{p}}l_q - \frac{1}{q}I_{-}^{\frac{1}{p}}I_{+}^{\frac{1}{q}}l_p
    \\
    &+\left( \gamma_{-}\left(k, \frac{\alpha - 1}{p}\right) + \int_{0}^{\infty}k(-1, t)dt \right)I_- - \frac{1}{p}I_{+}^{\frac{1}{q}}I_{-}^{\frac{1}{p}}l_p - \frac{1}{q}I_{+}^{\frac{1}{p}}I_{-}^{\frac{1}{q}}l_q
    \\
    &=\left( \gamma_{+}\left(k, \frac{\alpha - 1}{p}\right) + \int_{0}^{\infty}k(1, -t)dt - \frac{1}{p}l_q - \frac{1}{q}l_p \right)I_+
    \\
    &+\left( \gamma_{-}\left(k, \frac{\alpha - 1}{p}\right) + \int_{0}^{\infty}k(-1, t)dt - \frac{1}{p}l_p - \frac{1}{q}l_q \right)I_-
    \\
    &+ l_p\left( \frac{I_+}{q} + \frac{I_-}{p} -I_{-}^{\frac{1}{p}}I_{+}^{\frac{1}{q}} \right) + l_q\left( \frac{I_+}{p} + \frac{I_-}{q} -I_{+}^{\frac{1}{p}}I_{-}^{\frac{1}{q}} \right)
    \\
    &= f(1) + \mathcal{G}_{k, p}(u).
\end{align*}
\end{proof}

\section{Examples and miscellanea}
\label{sec:Example}
In this section we focus on implications of Theorems
 \cref{theorem: Hardy Inequality} and \cref{proposition: extra term} for $p = 2$ and kernels
 \begin{align*}
     \Masss(x, y) := |x - y|^{- \alpha - 1}\begin{cases}
         a_+, \quad x, y \geq 0,
         \\
         a_-, \quad x, y \leq 0,
         \\
         a_{\pm}, \quad \sign(x) \neq \sign(y), |x \lor y| =|x| \lor |y|,
         \\
         a_{\mp}, \quad \sign(x) \neq \sign(y), |x \lor y| \neq |x| \lor |y|,
     \end{cases}
\end{align*}
where $\boldsymbol{a} = (a_+, a_-, a_{\pm}, a_{\mp}) \in [0, \infty)^4$. Of course, $\alpha \in (0, 2)$. Figure~\ref{f.apm} shows the different regions of  $\R^2$, where the coefficients $a_+, a_-, a_\pm, a_\mp$ apply.
\begin{figure}[h]
\centering
Coefficients $a_+, a_-, a_\pm, a_\mp$ in the definition of $\Mass(x,y)$.
\begin{tikzpicture}[scale=3]
  \draw[->] (-1,0) -- (1,0) node[right] {$x$};
  \draw[->] (0,-1) -- (0,1) node[above] {$y$};

  \draw (-1,1) -- (1,-1);

  \node at (0.5,0.5) {$a_+$};
  \node at (-0.5,-0.5) {$a_-$};
  \node at (-0.25,0.5) {$a_\pm$};
  \node at (-0.5,0.25) {$a_\mp$};
  \node at (0.25,-0.5) {$a_\mp$};
  \node at (0.5,-0.25) {$a_\pm$};
\end{tikzpicture}
\caption{}
\label{f.apm}
\end{figure}
For example, the constant $a_\pm$ is used when the arguments $x,y$ of $\Mass(x,y)$ have different signs, and the one with the larger absolute value is positive.

\subsection{Constants}
The constant 
\begin{equation}
\label{equation: gamma def}
\gamma(\alpha, \rho) := \int_{0}^{1}\frac{\left(t^{\rho}-1\right)\left(1-t^{\alpha-1-\rho}\right)}{\left(1-t\right)^{\alpha+1}}dt, \quad \alpha \in (0, 2), \; \rho \in (-1, \alpha),
\end{equation}
appeared in Bogdan and Burdzy \cite[(5.2)]{MR2006232} and was widely used since to describe the boundary behavior of superharmonic functions of nonlocal operators with scaling, see, e.g., Kim, Song, and Vondraček \cite{MR4886451}. Here is a new formula for $\gamma$ in terms of the beta function
\begin{align}
B(x, y) = \frac{\Gamma(x)\Gamma(y)}{\Gamma(x + y)}, \quad x, y > 0.
\end{align}
\begin{lemma}
\label{lemma: gamma laplasjan}
For $\alpha \in (0, 2)$ and $\rho \in (-1, \alpha)$, 
\begin{align}
\label{equation: gamma laplasjan}
-\gamma(\alpha, \rho) = \frac{\sin\left(\pi\left(\frac{\alpha}{2}-\rho\right)\right)}{\sin(\pi\alpha/2)}B\left(\rho+1,\alpha-\rho\right)-\frac{1}{\alpha}.
\end{align}
\end{lemma}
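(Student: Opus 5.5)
The plan is to compute $-\gamma(\alpha,\rho)$ first for parameters where the four terms of the expanded integrand can be integrated separately as beta integrals, and then extend the result to all $\alpha\in(0,2)$ by analytic continuation in $\alpha$. Fix $\rho$. We have
\[
-\gamma(\alpha,\rho)=\int_0^1 (1-t^{\rho})(1-t^{\alpha-1-\rho})(1-t)^{-\alpha-1}\,dt .
\]
The numerator vanishes to second order at $t=1$. Near $t=0$ it is $O(1+t^{\rho}+t^{\alpha-1-\rho}+t^{\alpha-1})$. Hence the integral converges, and is holomorphic, for complex $\alpha$ in a connected open set $\Omega$: require $\operatorname{Re}\alpha<2$, $\operatorname{Re}(\alpha-\rho)>0$, $\operatorname{Re}\alpha>0$ or more loosely, and, when $\rho\le 0$, also $\operatorname{Re}\alpha-1>-1$ only where needed. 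More carefully, I would let $\Omega$ be the set of $\alpha$ with $\max(\rho,0,\rho-\ldots)$ below $\operatorname{Re}\alpha<2$, chosen so that all exponents $\rho$, $\alpha-1-\rho$, $\alpha-1$ exceed $-1$. Holomorphy follows from dominated convergence, using the same splitting $t\in(0,\tfrac12)$, $t\in[\tfrac12,1)$ as in the proof of finiteness of $\gamma_\pm$. The right-hand side of \eqref{equation: gamma laplasjan} is meromorphic, with a removable singularity at $\alpha=1$ (I check this via continuity, or by rewriting $\sin(\pi\alpha)=2\sin(\pi\alpha/2)\cos(\pi\alpha/2)$ before cancelling). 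So it suffices to prove the identity on an open subset of $\Omega$.

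The key step is to choose that subset so that all four pieces converge separately. These pieces are $\int_0^1 t^{c-1}(1-t)^{-\alpha-1}dt=B(c,-\alpha)$ with $c\in\{1,\rho+1,\alpha-\rho,\alpha\}$. Each needs $\operatorname{Re}\alpha<0$ and $\operatorname{Re}c>0$, which fails for $\alpha<0$ real when $c=\alpha$. To handle this I would take $\alpha$ complex with $\operatorname{Re}\alpha<0$ but move $\rho$ as well; it is simpler to continue jointly in $(\alpha,\rho)$. On the open real region $\{-1<\rho<\alpha<0\}$ one also has $\operatorname{Re}\alpha>0$ failing for the $t^{\alpha-1}$ term. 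I would therefore instead perform the continuation on the term $B(\alpha,-\alpha)=\Gamma(\alpha)\Gamma(-\alpha)/\Gamma(0)$, interpreted via $1/\Gamma(0)=0$. Concretely, I group $t^{\alpha-1}$ with the constant $1$: the integral $\int_0^1(1-t^{\alpha-1})(1-t)^{-\alpha-1}dt$ is first computed for small negative $\alpha$ as $\lim_{\varepsilon\to0}[B(1,-\alpha)-B(\alpha+\varepsilon,-\alpha)]$ after inserting $t^{\varepsilon}$, and $B(\alpha+\varepsilon,-\alpha)\to 0$ as $\varepsilon\to0$ because $1/\Gamma(\varepsilon)\to0$. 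Choosing the regularisation exponent so that all integrals converge, and then passing to the limit, is the main technical obstacle. The result in the region is
\[
-\gamma(\alpha,\rho)=B(1,-\alpha)-B(\rho+1,-\alpha)-B(\alpha-\rho,-\alpha)=-\frac1\alpha-\Gamma(-\alpha)\Bigl[\frac{\Gamma(\rho+1)}{\Gamma(\rho+1-\alpha)}+\frac{\Gamma(\alpha-\rho)}{\Gamma(-\rho)}\Bigr].
\]

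It remains to simplify this with the reflection formula. We have $1/\Gamma(\rho+1-\alpha)=\Gamma(\alpha-\rho)\sin(\pi(\alpha-\rho))/\pi$ and $1/\Gamma(-\rho)=-\Gamma(1+\rho)\sin(\pi\rho)/\pi$. The bracket therefore equals
\[
\frac{\Gamma(\rho+1)\Gamma(\alpha-\rho)}{\pi}\bigl(\sin\pi(\alpha-\rho)-\sin\pi\rho\bigr)=\frac{2\Gamma(\rho+1)\Gamma(\alpha-\rho)}{\pi}\cos\frac{\pi\alpha}{2}\,\sin\Bigl(\pi\bigl(\tfrac\alpha2-\rho\bigr)\Bigr).
\]
Combining with $\Gamma(-\alpha)=-\pi/(\sin(\pi\alpha)\Gamma(1+\alpha))$ and $\sin\pi\alpha=2\sin\frac{\pi\alpha}2\cos\frac{\pi\alpha}2$, and using $\Gamma(1+\alpha)=\Gamma(\rho+1+\alpha-\rho)$, the bracketed term becomes $-\frac{\sin(\pi(\alpha/2-\rho))}{\sin(\pi\alpha/2)}B(\rho+1,\alpha-\rho)$. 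This yields \eqref{equation: gamma laplasjan} on the open region. Analytic continuation in $\alpha$, and in $\rho$ by symmetry of the argument, then extends the identity to all $\alpha\in(0,2)$ and $\rho\in(-1,\alpha)$, with $\alpha=1$ covered by continuity.
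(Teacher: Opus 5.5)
Your closing reflection-formula computation is correct, but the step that produces the three-term Beta expression is not justified, and the route you propose for it cannot work. You continue in $\alpha$, which appears both in the numerator $t^{\alpha-1-\rho}$ and in the kernel $(1-t)^{-\alpha-1}$. There is no open set of parameters on which the integral defining $\gamma$ and its four-term Beta splitting both converge.

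Near $t=0$ the numerator is of order $t^{\min(\rho,0)+\min(\alpha-1-\rho,0)}$ (real parts understood), and $\min(\rho,0)+\min(\alpha-1-\rho,0)\le\alpha-1$. So, apart from the trivial cases $\rho\in\{0,\alpha-1\}$ where the integrand vanishes, the integral defining $\gamma(\alpha,\rho)$ converges only if $\operatorname{Re}\alpha>0$, whatever $\rho$ is. Each piece $\int_0^1 t^{c-1}(1-t)^{-\alpha-1}\,dt$, on the other hand, needs $\operatorname{Re}\alpha<0$. Hence on the region $\{-1<\rho<\alpha<0\}$ where you compute, $\gamma$ is not defined, and the identity theorem has no open set to start from.

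The $t^{\varepsilon}$ device does not bridge this. For $\alpha<0$ the integral $\int_0^1 t^{\alpha-1+\varepsilon}(1-t)^{-\alpha-1}\,dt$ converges only when $\varepsilon>-\alpha>0$, so letting $\varepsilon\to0$ leaves the range of convergence. The claim ``$B(\alpha+\varepsilon,-\alpha)\to0$ because $1/\Gamma(\varepsilon)\to0$'' is a formal evaluation of a meromorphic expression, not the value of any integral related to $\gamma$. This is exactly the obstacle you flag as the main technical difficulty, and it is left open.

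The repair is to decouple the kernel exponent from the numerator. Fix real $\alpha\in(0,2)\setminus\{1\}$ and $\rho\in(-1,\alpha)$, and put $G(z):=\int_0^1(1-t^{\rho})(1-t^{\alpha-1-\rho})(1-t)^{z-1}\,dt$.

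The numerator vanishes to second order at $t=1$ and is integrable at $t=0$, so $G$ is holomorphic on $\{\operatorname{Re}z>-2\}$. For $\operatorname{Re}z>0$ all four pieces converge, giving $G(z)=B(1,z)-B(\rho+1,z)-B(\alpha-\rho,z)+B(\alpha,z)$. The right-hand side is meromorphic with possible poles only at $z\in\{0,-1,-2,\dots\}$, so the identity extends to $z=-\alpha$. There $B(\alpha,-\alpha)=\Gamma(\alpha)\Gamma(-\alpha)/\Gamma(0)=0$ now holds legitimately.

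Thus $-\gamma(\alpha,\rho)=G(-\alpha)=B(1,-\alpha)-B(\rho+1,-\alpha)-B(\alpha-\rho,-\alpha)$ for every $\rho\in(-1,\alpha)$ at once. No continuation in $\rho$ is needed, since $1/\Gamma$ is entire. Your algebra then finishes the case $\alpha\neq1$. The case $\alpha=1$ still requires continuity of $\gamma$ in $\alpha$, e.g.\ via the dominated-convergence majorant the paper uses.

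Repaired this way, your argument is a self-contained derivation of the closed form. The paper instead imports the Beta-function formula of Bogdan and Dyda (valid for $\alpha\neq1$), rewrites it with the same reflection formulas on a dense set avoiding the poles, and extends by continuity of both sides.
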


\begin{proof} Let $-\widetilde{\gamma}(\alpha, \rho)$ denote the right-hand side of \cref{equation: gamma laplasjan}. We put
\begin{align*}
    X &:= \{ (\alpha, \rho) \in \R^2 : \alpha \in (0, 2), \rho \in (-1, \alpha)\},
    \\
    X_0 &:= X \setminus
    \bigg( \{(\alpha,\beta) : \beta \in \{0,1,\alpha-1,\alpha-2\},\, \alpha\in(0,2) \} \cup \Big( \{1\}\times (-1,1) \Big) \bigg).
\end{align*}
Observe that $\widetilde{\gamma}$ is continuous on $X$ and the set $X_0$ is dense in $X$. Therefore it is enough to verify that  $\gamma$ and $ \widetilde{\gamma}$ coincide on $X_0$ and that $\gamma$ is continuous on $X$. We first prove the former. In Bogdan and Dyda \cite[(2.2)]{2011-KB-BD-mn}, it is shown that for $\alpha \neq 1$,
\begin{align}
\label{equation: rownosc dla gammy}
-\gamma(\alpha, \rho) &= \frac{\left(\alpha-1-\rho\right)\left(\alpha-\rho-2\right)}{\alpha\left(1-\alpha\right)}B\left(\rho+1, 2-\alpha\right)\\
&+\frac{\rho\left(\rho-1\right)}{\alpha\left(1-\alpha\right)}B\left(\alpha-\rho, 2-\alpha\right)-\frac{1}{\alpha}.
\nonumber
\end{align}
In particular, \cref{equation: rownosc dla gammy} is true on $X_0$. Fix $(\alpha, \rho) \in X_0$. We get
\begin{align*}
    &\frac{B\left(\rho+1,2-\alpha\right)}{B\left(\rho+1,\alpha-\rho\right)} = \frac{\Gamma\left(\rho+1\right)\Gamma\left(2-\alpha\right)\Gamma\left(\alpha+1\right)}{\Gamma\left(3+\rho-\alpha\right)\Gamma\left(\rho+1\right)\Gamma\left(\alpha-\rho\right)} 
    \\
    = &\frac{\alpha\left(\alpha-1\right)}{\left(\rho-\alpha+1\right)\left(\rho-\alpha+2\right)}\frac{\Gamma\left(-\alpha\right)\Gamma\left(1-\left(-\alpha\right)\right)}{\Gamma\left(\alpha-\rho\right)\Gamma\left(1-\left(\alpha-\rho\right)\right)} = \frac{\alpha\left(1-\alpha\right)}{\left(\alpha-1-\rho\right)\left(\alpha-\rho-2\right)}\frac{\sin\left(\pi\left(\alpha-\rho\right)\right)}{\sin\left(\pi\alpha\right)}.
\end{align*}
Similarly,
\begin{align*}
    \frac{B\left(\alpha-\rho,\ 2-\alpha\right)}{B\left(\rho+1,\ \alpha-\rho\right)} = \frac{\alpha\left(\alpha-1\right)}{\rho\left(\rho-1\right)}\frac{\sin\left(\pi \rho\right)}{\sin\left(\pi\alpha\right)}.
\end{align*} 
Thus,
\begin{align*}
    \frac{-\gamma(\alpha, \rho) + \frac{1}{\alpha}}{B\left(\rho+1,\ \alpha-\rho\right)} = \frac{\sin\left(\pi\left(\alpha-\rho\right)\right)-\sin\left(\pi \rho\right)}{\sin\left(\pi\alpha\right)} = \frac{\sin\left(\pi\left(\frac{\alpha}{2}-\rho\right)\right)}{\sin(\pi\alpha/2)}.
\end{align*}
The last equality holds because of the trigonometric identities 
\begin{align*}
    \sin\left(x\right)-\sin\left(y\right)&=2\cos\left(\frac{x+y}{2}\right)\sin\left(\frac{x-y}{2}\right),
    \\
    \sin(x)&=2\cos\left(\frac{x}{2}\right)\sin\left(\frac{x}{2}\right).
\end{align*}
We next  show the continuity of $\gamma$ on $X$. Let 
\begin{align*}
    g_{\alpha, \rho}(x) := \frac{\left(1-x^{\rho}\right)\left(1-x^{\alpha-1-\rho}\right)}{\left(1-x\right)^{\alpha+1}}, \quad (\alpha, \rho) \in X,\; x \in (0, 1).
\end{align*}
 Let $\alpha_1 < \alpha_2$, $\rho_1 < \rho_2$, $[\alpha_1, \alpha_2] \times [\rho_1, \rho_2] \subset X$. We introduce a majorant of $g_{\alpha, \rho}$ for all $(\alpha, \rho) \in [\alpha_1, \alpha_2] \times [\rho_1, \rho_2]$. When $x \in (0, \frac{1}{2})$,
\begin{align*}
    |g_{\alpha, \rho}(x)| \leq \frac{1 + x^\rho + x^{\alpha - 1- \rho} + x^{\alpha - 1}}{(\frac{1}{2})^{\alpha + 1}} \leq 2^{\alpha_2 + 1}(1 + x^{\rho_1} + x^{\alpha_1 - \rho_2 - 1} + x^{\alpha_1 - 1}),
\end{align*}
which is integrable on $(0, \frac{1}{2})$ because $\rho_1, \alpha_1 - 1, \alpha_1 - \rho_2 - 1 > -1$. We next consider $x \in [\frac{1}{2}, 1)$. For $r \in (-1, 2)$, we have $|1 - x^r| \leq |1 - x^{-1}| \lor |1 - x^2| = |1-x|(\frac{1}{|x|} \vee |1+x|) \leq 2(1 - x)$.
  Thus,
\begin{align*}
    |g_{\alpha, \rho}(x)| < \frac{4(1 - x)^2}{(1 - x)^{\alpha + 1}} \leq 4(1 - x)^{1 - \alpha_2},\quad x\in [1/2,1),
\end{align*}
which is integrable on $[\frac{1}{2}, 1)$ because $1 - \alpha_2 > -1$. By the dominated convergence theorem, $\gamma$ is continuous on the rectangle $(\alpha_1, \alpha_2) \times (\rho_1, \rho_2)$.
\end{proof}
Here are a few basic properties of $\gamma$ resulting from \cref{equation: gamma laplasjan}.
\begin{corollary}The following identities hold,
\begin{align*}
    \gamma(\alpha, \rho) &= \gamma(\alpha, \alpha - 1- \rho), && \alpha \in (0, 2),\ \rho \in (-1, \alpha), \\
    \gamma\left( \alpha, \frac{\alpha}{2} \right) &= \frac{1}{\alpha}, && \alpha \in (0, 2), \\
    \gamma(1, \rho) &=
    1 - \pi\rho\cot(\pi\rho), && \rho \in (-1, 1)\setminus\{0\}.
\end{align*}
\end{corollary}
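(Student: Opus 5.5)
All three identities follow by specializing the closed form \eqref{equation: gamma laplasjan} of Lemma~\ref{lemma: gamma laplasjan}, which holds on all of $\{\alpha\in(0,2),\ \rho\in(-1,\alpha)\}$. For each identity I substitute the given parameters into the right-hand side of \eqref{equation: gamma laplasjan} and simplify with elementary properties of $\sin$, $B$ and $\Gamma$. Nothing beyond Lemma~\ref{lemma: gamma laplasjan} is needed.

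\emph{Symmetry.} The symmetry can be read off directly from \eqref{equation: gamma def}: the numerator $(t^{\rho}-1)(1-t^{\alpha-1-\rho})$ equals $(t^{\rho'}-1)(1-t^{\alpha-1-\rho'})$ for $\rho'=\alpha-1-\rho$, since both are $-(1-t^\rho)(1-t^{\alpha-1-\rho})$. Moreover, $\rho\in(-1,\alpha)$ if and only if $\alpha-1-\rho\in(-1,\alpha)$, so the map $\rho\mapsto\alpha-1-\rho$ preserves the domain. As a consistency check on \eqref{equation: gamma laplasjan}, the same substitution leaves its right-hand side unchanged. Indeed, $B(\rho+1,\alpha-\rho)$ becomes $B(\alpha-\rho,\rho+1)$, which is the same value. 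Also
\[
\frac{\alpha}{2}-(\alpha-1-\rho)=1-\Big(\frac{\alpha}{2}-\rho\Big),
\]
and $\sin(\pi-x)=\sin x$, so the sine factor is unchanged as well.

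\emph{Value at $\rho=\alpha/2$.} Since $\alpha/2\in(-1,\alpha)$, we may take $\rho=\alpha/2$ in \eqref{equation: gamma laplasjan}. The factor $\sin(\pi(\alpha/2-\rho))=\sin 0$ vanishes, which leaves $-\gamma(\alpha,\alpha/2)=-1/\alpha$, that is, $\gamma(\alpha,\alpha/2)=1/\alpha$.

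\emph{Case $\alpha=1$.} Here $\sin(\pi\alpha/2)=1$ and $\sin(\pi(1/2-\rho))=\cos(\pi\rho)$. Using $\Gamma(2)=1$, the identity $\Gamma(1+\rho)=\rho\Gamma(\rho)$ and Euler's reflection formula, the beta factor is
\[
B(\rho+1,1-\rho)=\Gamma(1+\rho)\Gamma(1-\rho)=\rho\,\Gamma(\rho)\Gamma(1-\rho)=\frac{\pi\rho}{\sin(\pi\rho)}\quad\text{for }\rho\in(-1,1)\setminus\{0\}.
\]
Substituting into \eqref{equation: gamma laplasjan} gives $-\gamma(1,\rho)=\pi\rho\cot(\pi\rho)-1$, which is the claimed formula. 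The point $\rho=0$ is excluded only because $\cot$ is undefined there. The formula extends continuously to $\gamma(1,0)=0$, in agreement with the vanishing of the integrand in \eqref{equation: gamma def}.

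The only step requiring care is the reflection-formula manipulation in the case $\alpha=1$, in particular excluding $\rho=0$. The remaining steps are immediate substitutions.
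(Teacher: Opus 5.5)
Your proposal is correct and follows the paper's route. The paper simply states that these identities result from the closed form \eqref{equation: gamma laplasjan} of Lemma~\ref{lemma: gamma laplasjan}, and your substitutions supply exactly those details, including the reflection-formula computation for $\alpha=1$. Reading the symmetry directly off the integrand in \eqref{equation: gamma def} is also valid, and even more elementary.
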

Moreover, it is known that for any $\alpha \in (0, 2)$, the function $\rho \mapsto -\gamma(\alpha, \rho), \rho \in (-1, \alpha)$ achieves its maximum at the point $\rho = \frac{\alpha-1}{2}$,
see \cite[page 632, paragraph between (1.8) and (1.9)]{2011-KB-BD-mn}. We have,
\begin{align*}
    -\gamma \left(\alpha, \frac{\alpha - 1}{2} \right) = B\left(\frac{\alpha + 1}{2}, \frac{\alpha + 1}{2} \right)/\sin(\pi\alpha/2) - \frac{1}{\alpha}, &&\alpha \in (0, 2).
\end{align*}
Instead of \eqref{equation: gamma laplasjan}, we can write
\begin{align*}
    -\gamma(\alpha, \rho) &= \frac{B\left(\frac{\alpha}{2},\ 1-\frac{\alpha}{2}\right)B\left(\rho+1,\alpha-\rho\right)}{B\left(\frac{\alpha}{2}-\rho,\ 1-\frac{\alpha}{2}+\rho\right)}-\frac{1}{\alpha}
    \\
    &=\frac{\Gamma\left(\frac{\alpha}{2}\right)\Gamma\left(1-\frac{\alpha}{2}\right)\Gamma\left(\rho+1\right)\Gamma\left(\alpha-\rho\right)}{\Gamma\left(1-\frac{\alpha}{2}+\rho\right)\Gamma\left(\alpha+1\right)\Gamma\left(\frac{\alpha}{2}-\rho\right)}-\frac{1}{\alpha}, \quad \alpha \in (0, 2),\ \rho \in (-1, \alpha).
\end{align*}
By Corollary \cref{corollary: case p = 2 critical weight}, for every $r \in (0, \infty)$,
\begin{align*}
    M_{\Masss, r}(x) = |x|^{-\alpha}\begin{cases}
        C_+\left(\Masss, r, \frac{\alpha - 1} {2}\right), \quad &x \geq 0,
        \\[1ex]
        C_-\left(\Masss, r, \frac{\alpha - 1} {2}\right), \quad &x < 0
    \end{cases}
\end{align*}
is a critical $\left(\Masss, 2\right)$-Hardy weight.
\begin{proposition}
\label{proposition: constants nu^a}
For $\alpha \in (0, 2)$ and $r \in (0, \infty)$, we have
\begin{align*}
    C_{+}\left(\Masss, r, \frac{\alpha - 1}{2}\right) &= B\left(\frac{\alpha+1}{2},\frac{\alpha+1}{2}\right) \left(\frac{a_+}{\sin(\pi\alpha/2)} - r\frac{a_\pm + a_\mp}{2} \right) 
    \\
    &+ \frac{a_\pm - a_+}{\alpha} + \frac{a_\mp -a_\pm}{\alpha 2^\alpha},
    \\
    C_{-}\left(\Masss, r, \frac{\alpha - 1}{2}\right) &= B\left(\frac{\alpha+1}{2},\frac{\alpha+1}{2}\right) \left(\frac{a_-}{\sin(\pi\alpha/2)} -\frac{1}{r} \frac{a_\pm + a_\mp}{2}\right) 
    \\
    &+ \frac{a_\mp - a_-}{\alpha} + \frac{a_\pm - a_\mp}{\alpha 2^\alpha}.
\end{align*}
\end{proposition}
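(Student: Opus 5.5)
We compute the three ingredients of $C_+(\Masss,r,\rho)$ from Definition~\ref{definition: gamma C} at $\rho=\frac{\alpha-1}{2}$, so that $\alpha-1-\rho=\rho$. The case of $C_-$ then follows by the symmetry $x\mapsto -x$, which swaps $a_+\leftrightarrow a_-$ and $a_\pm\leftrightarrow a_\mp$ and replaces $r$ by $1/r$.

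\emph{First term.} For $t\in(0,1)$ we have $\Masss(1,t)=a_+(1-t)^{-\alpha-1}$. Hence
\[
\gamma_+(\Masss,\rho)=a_+\int_0^1(1-t^\rho)^2(1-t)^{-\alpha-1}\,dt=-a_+\gamma(\alpha,\rho).
\]
By Lemma~\ref{lemma: gamma laplasjan}, with $\rho=\frac{\alpha-1}{2}$, so that $\frac{\alpha}{2}-\rho=\frac12$, this equals
\[
a_+\Bigl(B\bigl(\tfrac{\alpha+1}{2},\tfrac{\alpha+1}{2}\bigr)/\sin(\pi\alpha/2)-\tfrac1\alpha\Bigr).
\]
This produces the terms $\frac{a_+B}{\sin(\pi\alpha/2)}$ and $-\frac{a_+}{\alpha}$.

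\emph{Mixed-sign kernel.} For $t>0$, the point $(1,-t)$ has coordinates of opposite sign. If $t<1$, the larger absolute value belongs to the positive coordinate $x=1$, so $x\vee y=1=|x|\vee|y|$ and the coefficient is $a_\pm$. If $t>1$, the coefficient is $a_\mp$. Thus
\[
\Masss(1,-t)=(1+t)^{-\alpha-1}\bigl(a_\pm\mathds 1_{t<1}+a_\mp\mathds 1_{t>1}\bigr).
\]
The second term of $C_+$ is then
\[
a_\pm\int_0^1(1+t)^{-\alpha-1}\,dt+a_\mp\int_1^\infty(1+t)^{-\alpha-1}\,dt=\frac{a_\pm(1-2^{-\alpha})+a_\mp 2^{-\alpha}}{\alpha},
\]
which matches $\frac{a_\pm}{\alpha}+\frac{a_\mp-a_\pm}{\alpha2^\alpha}$.

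\emph{Third term.} We need $\int_0^\infty t^\rho\Masss(1,-t)\,dt$. Substituting $t\mapsto 1/t$ on $(1,\infty)$ turns $\int_1^\infty t^\rho(1+t)^{-\alpha-1}\,dt$ into $\int_0^1 t^{\alpha-1-\rho}(1+t)^{-\alpha-1}\,dt$. Since $\alpha-1-\rho=\rho$, this is the same integral as the one over $(0,1)$. So both pieces equal $J:=\int_0^1 t^\rho(1+t)^{-\alpha-1}\,dt$, and the third term is $r(a_\pm+a_\mp)J$. Moreover,
\[
2J=\int_0^\infty t^{\rho}(1+t)^{-\alpha-1}\,dt=B(\rho+1,\alpha-\rho)=B\bigl(\tfrac{\alpha+1}{2},\tfrac{\alpha+1}{2}\bigr)
\]
by the standard beta integral. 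This gives $-rB\frac{a_\pm+a_\mp}{2}$.

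Summing the three terms yields the stated formula for $C_+$. For $C_-$, we use $\Masss(-1,-t)=a_-(1-t)^{-\alpha-1}$ for $t\in(0,1)$. For $(-1,t)$, the coefficient is $a_\mp$ when $t<1$, since the negative coordinate has the larger modulus, and $a_\pm$ when $t>1$. Repeating the same computation, with the factor $1/r$ in front of the last integral, gives the claimed expression.

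The only delicate point is correctly reading off which of $a_\pm$ and $a_\mp$ applies on each half of the ray. The rest follows from Lemma~\ref{lemma: gamma laplasjan} and the self-duality $\rho=\alpha-1-\rho$ that makes the $t\mapsto1/t$ substitution close up.
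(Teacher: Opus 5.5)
Your proof is correct and follows essentially the same route as the paper. You split $C_\pm$ into $\gamma_\pm$ (evaluated via Lemma~\ref{lemma: gamma laplasjan} at $\rho=\frac{\alpha-1}{2}$), the mixed-sign integral split at $t=1$, and the weighted mixed-sign integral handled by $t\mapsto 1/t$ and the beta integral; your assignment of $a_\pm$ versus $a_\mp$ on each side of $t=1$ and your reflection shortcut (which turns $C_-$ with $r$ into $C_+$ with $1/r$ and swapped coefficients) are both accurate.
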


\begin{proof} Using the Definition~\ref{definition: gamma C}, we calculate
\begin{align*}
    \int_0^\infty k(1, -t)dt = a_\pm \int_0^1 (1 + t)^{-\alpha - 1}dt + a_\mp \int_1^\infty (1 + t)^{-\alpha - 1}dt = \frac{a_\pm}{\alpha} + \frac{a_\mp - a_\pm}{\alpha 2^\alpha},
    \\
    \int_0^\infty k(-1, t)dt = a_\mp \int_0^1 (1 + t)^{-\alpha - 1}dt + a_\pm \int_1^\infty (1 + t)^{-\alpha - 1}dt = \frac{a_\mp}{\alpha} + \frac{a_\pm - a_\mp}{\alpha 2^\alpha}.
\end{align*}
By the change of variables $t = \frac{1}{u}$, we get
\begin{align*}
    \int_{0}^1 \frac{t^{\frac{\alpha - 1}{2}}}{(1 + t)^{\alpha+1}}dt = \int_{1}^\infty \frac{t^{\frac{\alpha - 1}{2}}}{(1 + t)^{\alpha+1}}dt = \frac{1}{2}\int_{0}^\infty \frac{t^{\frac{\alpha - 1}{2}}}{(1 + t)^{\alpha+1}}dt = \frac{1}{2}B\left(\frac{\alpha+1}{2},\frac{\alpha+1}{2}\right),
\end{align*}
thus
\begin{align*}
\int_{0}^\infty t^{\frac{\alpha - 1}{2}}k(1, -t)dt = \int_{0}^\infty t^{\frac{\alpha - 1}{2}}k(-1, t)dt = \frac{a_\pm + a_\mp}{2}B\left(\frac{\alpha+1}{2},\frac{\alpha+1}{2}\right).
\end{align*}
Comparing this to Lemma \ref{lemma: gamma laplasjan}, we get the result.
\end{proof}

\subsection{Servadei--Valdinoci form}\label{ss.SV}
Let $\mass := \Masss$ for $\alpha \in (0,2)$ and $\boldsymbol{a} = \mathcal{A}_\alpha\cdot (1,0,1,1)$. The Servadei--Valdinoci form \eqref{e.SVf}
is
\begin{align}\label{e.SVf2}
    \frac{1}{2} \int_{\R}\int_{\R} \bigl(u(x) - u(y)\bigr)^2 \mass(x,y)  dx  dy = \frac{\mathcal{A}_\alpha}{2} \iint_{U} \frac{\bigl(u(x) - u(y)\bigr)^2}{|x - y|^{\alpha+1}}  dx  dy,
\end{align}
where $U := {\{(x, y) \in \R^2: x > 0 \text{ or } y > 0\}}$.
According to the Introduction, 
\cite[Theorem 1.2.1, Proposition 5.3.7, and Proposition 5.3.9]{BogdanFafulaSztonyk2025} yield a Hardy inequality on $L^2(|x|^{-\alpha}dx)$ for the form. Here we improve this result by proving the criticality of $M_{\mass, 1}$. Moreover,  by Theorem~\ref{theorem: Hardy Inequality}, we provide additional critical $(\mass, 2)$-Hardy weights $M_{\mass, r}$ for all $r > 0$. Below, we present their basic properties. Let
 \begin{equation*}
    Z_{+}(\alpha, r) := C_{+}\left(\mass, r, \frac{\alpha - 1}{2}\right)
    \quad \mbox{ and } \quad
    Z_{-}(\alpha, r) := C_{-}\left(\mass, r, \frac{\alpha - 1}{2}\right).
\end{equation*}
Then 
\begin{align*}
    M_{\mass, r}(x) = |x|^{-\alpha}\begin{cases}
        Z_+(\alpha, r), \quad x \geq 0,
        \\
        Z_{-}(\alpha, r), \quad x < 0.
    \end{cases}
\end{align*}
From Proposition \cref{proposition: constants nu^a}, we get the following result.

\begin{corollary} For $\alpha \in (0, 2)$ and $r \in (0, \infty)$,
    \begin{align*}
    Z_{+}(\alpha, r) = & \mathcal{A}_\alpha B\left(\frac{\alpha+1}{2},\frac{\alpha+1}{2}\right) \left(\frac{1}{\sin(\pi\alpha/2)} - r \right),
    \\
    Z_{-}(\alpha, r) = & \mathcal{A}_\alpha\left ( \frac{1}{\alpha} - \frac{1}{r}B\left(\frac{\alpha+1}{2},\frac{\alpha+1}{2}\right) \right ).
\end{align*}
\end{corollary}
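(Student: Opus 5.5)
We specialize Proposition~\ref{proposition: constants nu^a} to $\boldsymbol{a}=\mathcal{A}_\alpha\cdot(1,0,1,1)$, so that $a_+=a_\pm=a_\mp=\mathcal{A}_\alpha$ and $a_-=0$. Since both $C_\pm$ are linear in $\boldsymbol{a}$, the factor $\mathcal{A}_\alpha$ can be pulled out, and it suffices to treat $\boldsymbol{a}=(1,0,1,1)$ and multiply the result by $\mathcal{A}_\alpha$ at the end. Below we abbreviate
\[
B:=B\left(\tfrac{\alpha+1}{2},\tfrac{\alpha+1}{2}\right).
\]

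For $Z_+$ we substitute into the first formula of Proposition~\ref{proposition: constants nu^a}. The bracket becomes $\frac{1}{\sin(\pi\alpha/2)}-r\cdot\frac{1+1}{2}=\frac{1}{\sin(\pi\alpha/2)}-r$. The correction terms are $\frac{a_\pm-a_+}{\alpha}=0$ and $\frac{a_\mp-a_\pm}{\alpha2^\alpha}=0$. This gives
\[
Z_+(\alpha,r)=\mathcal{A}_\alpha B\left(\frac{1}{\sin(\pi\alpha/2)}-r\right),
\]
as claimed.

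For $Z_-$ we substitute into the second formula. Since $a_-=0$, the first summand of the bracket vanishes, and the bracket reduces to $-\frac{1}{r}\cdot\frac{1+1}{2}=-\frac1r$. The correction terms are $\frac{a_\mp-a_-}{\alpha}=\frac1\alpha$ and $\frac{a_\pm-a_\mp}{\alpha2^\alpha}=0$. Hence
\[
Z_-(\alpha,r)=\mathcal{A}_\alpha\left(\frac1\alpha-\frac1r B\right).
\]

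There is no real obstacle in this argument. The only point to check is that $\mass$ is indeed $\Masss$ with these coefficients. On the region $x,y<0$ the kernel must vanish, which requires $a_-=0$. On the mixed-sign regions the kernel is $\mathcal{A}_\alpha|x-y|^{-\alpha-1}$, because the pair lies in $U$. This membership also holds on the boundary pieces where $x$ or $y$ equals $0$, but those form a null set, so ambiguities in the convention there do not affect any integral. With the coefficients confirmed, the formulas follow directly from the substitution above.
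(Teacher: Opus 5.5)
Your proof is correct and follows the paper's own route: the corollary is obtained by substituting $a_+=a_\pm=a_\mp=\mathcal{A}_\alpha$, $a_-=0$ into Proposition~\ref{proposition: constants nu^a}, and your simplifications of both $Z_+$ and $Z_-$ are right. One small quibble: pairs $(0,y)$ with $y<0$ do not lie in $U$, so your aside that membership in $U$ extends to the boundary pieces is slightly off, but, as you say, this is a Lebesgue-null set and does not affect the result.
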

We easily see  that $Z_+, Z_- > 0$ if and only if
\begin{align*}
    r \in \left ( \alpha B \left( \frac{\alpha + 1}{2}, \frac{\alpha + 1}{2} \right), \; \frac{1}{\sin(\pi\alpha/2)}\right) =: U(\alpha). 
\end{align*}
Moreover, $U(\alpha) = \varnothing$ only in the case $\alpha = 1$. By Collorary \cref{corollary: case p = 2 constant}, in the case $\alpha \in (0, 2) \setminus \{1\}$, there exists $r_0 \in U(\alpha)$ such that
\begin{align*}
    Z_+(\alpha, r_0) = Z_-(\alpha, r_0) =: D(\alpha).
\end{align*}
Thus,
\begin{align*}
    M_{\mass, r_0}(x) = D(\alpha)|x|^{-\alpha}, \quad x\in \R,
\end{align*}
is a symmetric critical Hardy weight for the Servadei--Valdinoci form.
The constant $D(\alpha)$ can be calculated explicitly: 
\begin{align}\label{e.sSV}
    D(\alpha) = \frac{1}{2}\mathcal{A}_\alpha b(\alpha)\left ( \frac{1}{ \sin{\left( \frac{\pi\alpha}{2} \right)}} + \frac{1}{\alpha b(\alpha)} - \sqrt{4 
    +\left ( \frac{1}{ \sin{\left( \frac{\pi\alpha}{2} \right)}} - \frac{1}{\alpha b(\alpha)}  \right )^2} \right ),
\end{align}
where $b(\alpha) := B\left(\frac{\alpha+1}{2},\frac{\alpha+1}{2}\right)$.
In the case $\alpha = 1$, $Z_+(1, 1) = Z_-(1, 1) = 0$ and if $r \neq 1$ then $Z_+(1, r)$ and $Z_-(1, r)$ are  nonzero and have opposite signs.
The comparison of the Hardy constant for the quadratic form \eqref{definition: quadratic form} of the fractional Laplacian  and the Servadei--Valdinoci form $S$ in \eqref{e.SVf} is given in Figure \ref{f.cHc}. An informal interpretation guided by probabilistic intuitions of the authors is that the escape rate of mass due to activity of the form \eqref{definition: quadratic form} is higher, especially on the negative half-line, so there is  more room for the Hardy integral than in the case of $S$.
\begin{figure}[H]
\centering
\vspace{2mm} 
\begin{tikzpicture}
\begin{axis}[
    title = {},
    width=12cm,
    height=8cm,
    axis lines=middle,
    axis line style={->},
    tick style={black},
    enlargelimits,
    xlabel={$\alpha$},
    ylabel={},
    legend pos=north east,
    xtick={0,1,2},
    ytick={0,0.25,0.5,1},
    yticklabels={0,$\frac{1}{4}$,$\frac{1}{2}$,1},
    xlabel style={at={(axis description cs:0.5,-0.05)}, anchor=north},
    ylabel style={at={(axis description cs:-0.06,0.5)}, rotate=90, anchor=south}
]
    \addplot[black, thick]
        table[x=alpha, y=kappa, col sep=comma] {kappa.csv};
    \addlegendentry{$\kappa_{\frac{1-\alpha}{2}}$}

    \addplot[black, thick, dashed]
        table[x=alpha, y=D, col sep=comma] {D.csv};
    \addlegendentry{$D(\alpha)$}
    
    \addplot[black, thin, dotted] coordinates {(0,0.25) (2,0.25)};
\end{axis}
\end{tikzpicture}
\caption{Comparison of $\kappa_{\frac{1-\alpha}{2}}$ and $D(\alpha)$.}\label{f.cHc}
\end{figure}

\subsection{The role of the reference measure}\label{ss.uorm}
We offer some comments to further explain the role of the reference measure $|x|^{-\alpha}dx$ in our results. 
We will focus on $p=2$ and derive the Hardy inequality for $L^2(\R,dx)$ as a consequence of the Hardy inequality for $L^2(\R, |x|^{-\alpha}dx)$. We will get the same Hardy weight, but we will also experience a restriction on $\alpha$. For motivation, compare the settings of \eqref{e:hardy-quad} and Corollary \ref{c.pnH}, \ref{corollary: case p = 2 ground state}, or \ref{corollary: case p = 2 critical weight}.
Thus, for \( n \in \mathbb{N} \), we define the truncation function
\[
T_n(x) := \operatorname{sgn}(x) \cdot \left( \left(|x| - \tfrac{1}{n} \right) \vee 0 \wedge n \right),\quad x\in \R.
\]
Of course, $|T_n(x)|\le |x|$, $|T_n(x)-T_n(y)|\le |x-y|$, and $T_n(x)\to x$ as $n\to \infty$. Therefore, by the self-dominated convergence \cite[Lemma 6]{MR4372148}, $\E[T_n(u)]\to \E[u]$ and $\int_\R |T_n(u)|^2\mu\to \int_\R |u|^2$ for every measure $\mu$ on $\R$ and $u:\R\to \R$.
Furthermore, if $u\in L^2(dx)$, then for every $n\in \mathbb N$, the set $\{|u(x)|>1/n\}$ is of finite measure, hence $T_n(u)\in L^2(|x|^{-\alpha}\,dx)$ if $\alpha< 1$.
By the above and Corollary~\ref{c.Hip} with $p=2$, we get for $\beta \in (-2, 2\alpha)$, $s \in (0, \infty)$, $k \in K_\alpha$, 
\begin{align}
\label{equation: hardy inequality V dx}
\begin{split}
        \frac{1}{2}\int_{\R}\int_{\R}|u(x)-u(y)|^2 k(x, y)\,dx\,dy \geq \int_{\R}|u(x)|^2 w_{k, 2, s, \beta}(x)dx, \quad u\in L^2(\R,dx), 
\end{split}
\end{align}
as needed, provided $\alpha< 1$.
The inequality and arguments clearly fail for $\alpha\in (0,1]$, even for $u(x)=e^{-x^2}$, because the function is not square integrable with respect to $|x|^{-\alpha}dx$.

Recall, however, that we do have a nontrivial Hardy inequality for $\alpha\in (1,2)$ and $u\in L^2(\R,|x|^{-\alpha}dx)$, in particular for $u\in C_c(\R_*)$, see Figure~\ref{f.cHc} and Corollary~\ref{c.La2}. This gives further  motivation for using  $|x|^{-\alpha}dx$ as the reference measure. 
\section*{Declarations}

\paragraph{\bf Conflicts of Interests/Competing Interests} 
The authors declare that there are no conflicts of interest or competing interests associated with this work.

\paragraph{\bf Data Availability} 
No data was used in this research.





\end{document}